\def\del{\delta}
\numberwithin{equation}{section}
\theoremstyle{plain}
\newtheorem{thm}{Theorem}[section]
\newtheorem{cor}[thm]{Corollary}
\newtheorem{lem}[thm]{Lemma}
\newtheorem{prop}[thm]{Proposition}
\newtheorem{defn}[thm]{Definition}
\newtheorem{exm}[thm]{Example}
\newtheorem{rem}[thm]{Remark}
\newcommand{\add}{\operatorname{add}\nolimits}
\newcommand{\Fac}{\operatorname{Fac}\nolimits}
\newcommand{\Hom}{\operatorname{Hom}\nolimits}
\newcommand{\End}{\operatorname{End}\nolimits}
\newcommand{\Ext}{\operatorname{Ext}\nolimits}
\newcommand{\pd}{\operatorname{pd}\nolimits}
\renewcommand{\mod}{\mathsf{mod}\hspace{.01in}}
\newcommand{\Cone}{\operatorname{Cone}\nolimits}
\newcommand{\CoCone}{\operatorname{CoCone}\nolimits}
\newcommand{\M}{\mathcal M}
\newcommand{\B}{\mathcal B}
\newcommand{\uB}{\underline{\B}}
\newcommand{\oB}{\overline{\B}}
\newcommand{\U}{\mathcal U}
\newcommand{\A}{\mathcal A}
\newcommand{\W}{\mathcal W}
\newcommand{\h}{\mathcal H}
\newcommand{\s}{\mathcal S}
\newcommand{\T}{\mathcal T}
\newcommand{\I}{\mathcal I}
\newcommand{\D}{\mathcal D}
\newcommand{\R}{\mathcal R}
\newcommand{\X}{\mathscr X}
\newcommand{\Y}{\mathcal Y}
\newcommand{\C}{\mathcal C}
\newcommand{\EE}{\mathbb E}
\newcommand{\svecv}[2]{\left(\begin{smallmatrix}
      #1 \\
      #2
    \end{smallmatrix}\right)}
\newcommand{\svech}[2]{\left(\begin{smallmatrix}
      #1 & #2
\end{smallmatrix}\right)}
\renewcommand{\emph}{\textit}
\renewcommand{\phi}{\varphi}
\begin{document}

\title{Relative rigid subcategories and $\tau$-tilting theory}\footnote{Yu Liu was supported by the Fundamental Research Funds for the Central Universities (Grant No. 2682018ZT25) and the National Natural Science Foundation of China (Grant No. 11901479). Panyue Zhou was supported by the National Natural Science Foundation of China (Grant Nos. 11901190 and 11671221), and by the Hunan Provincial Natural Science Foundation of China (Grant No. 2018JJ3205), and by the Scientific Research Fund of Hunan Provincial Education Department (Grant No. 19B239).}
\author{Yu Liu and Panyue Zhou}
\address{School of Mathematics, Southwest Jiaotong University, 610031, Chengdu, Sichuan, People's Republic of China}
\email{liuyu86@swjtu.edu.cn}
\address{College of Mathematics, Hunan Institute of Science and Technology, 414006, Yueyang, Hunan, People's Republic of China}
\email{panyuezhou@163.com}
\thanks{The authors would like to thank Professor Dong Yang and Professor Bin Zhu for helpful discussions.}
\begin{abstract}
Let $\B$ be an extriangulated category with enough projectives $\mathcal P$ and enough injectives $\mathcal I$, and let
 $\R$ be a contravariantly finite rigid subcategory of $\B$ which contains $\mathcal P$.  We have an abelian quotient category $\h/\R\subseteq \B/\R$ which is equivalent $\mod (\R/\mathcal P)$.  In this article, we find a one-to-one correspondence between support $\tau$-tilting (resp. $\tau$-rigid) subcategories of $\h/\R$ and maximal relative rigid (resp. relative rigid) subcategories of $\h$, and show that support tilting subcategories in $\h/\R$ is a special kind of support $\tau$-tilting subcategories. We also study the relation between tilting subcategories of $\B/\R$ and cluster tilting subcategories of $\B$ when $\R$ is cluster tilting.
\end{abstract}
\keywords{extriangulated categories; support $\tau$-tilting subcategories; support tilting subcategories; tilting subcategories; maximal relative rigid subcategories; triangulated categories; exact categories.}
\subjclass[2010]{18E30; 18E10; 16D90}
\maketitle

\section{Introduction}

In mathematics, especially representation theory, classical tilting theory describes a way to relate the module categories of two algebras using so-called tilting modules and associated tilting functors. Classical tilting theory was motivated by the reflection functors introduced by Bernstein, Gelfand and  Ponomarev \cite{BGP}. These functors were reformulated by Auslander, Platzeck and Reiten \cite{APR}, and generalized by Brenner and Butler \cite{BB}.

Inspired by classical tilting theory, Buan, Marsh, Reineke, Reiten and Todorov \cite{BMRRT} introduced
cluster tilting objects in the context of cluster categories associated to a hereditary algebra, in order to categorify certain phenomena occurring in the theory
of cluster algebras developed by Fomin and Zelevinsky \cite{FZ}. Cluster categories have led to new developments in the theory of the canonical basis and its dual. They are providing insight into cluster algebras
and the related combinatorics, and have also been used to establish a new kind
of classical tilting theory, known as cluster tilting theory, which generalizes APR-tilting for
hereditary algebras.

Cluster tilting theory furnishes a way to construct abelian categories from triangulated categories.
Koenig and Zhu \cite[Theorem 3.2]{KZ} showed that if $\C$ is a triangulated category and $\X$ is a cluster tilting  subcategory of $\C$, then
the quotient category $\C/\X$ is an abelian category. Moreover,  the category $\C/\X$ is Gorenstein of Gorenstein dimension at most one \cite[Theorem 4.3]{KZ}, which implies that it is either hereditary or of infinite global dimension.

In \cite{AIR}, Adachi, Iyama and Reiten introduced a generalized classical tilting theory, which is called $\tau$-tilting theory.  They  proved that for a $2$-Calabi-Yau triangulated category $\C$ with a cluster tilting object $T$, there exists a
 bijection between the basic cluster tilting objects in $\C$ and the basic support $\tau$-tilting modules in $\mod\End_\C(T)^{\textrm{op}}$ (which is equivalent to the quotient category $\C/\add T[1]$). This bijection was generalized first by Yang and Zhu \cite{YZ} by introducing the notion of relative cluster tilting objects in a triangulated category with a cluster tilting object, later by Fu, Geng and Liu \cite{FGL}  by introducing the notion of relative rigid objects in a triangulated category $\C$ with a rigid object.
Later, Iyama, J{\o}rgensen and Yang \cite{IJY} gave a functor version of $\tau$-tilting theory. They consider modules over a category and showed for a triangulated category $\C$ with a silting subcategory $\mathcal S$, there exists a bijection between
the set of two-term silting subcategories of $\C$ and the set of support $\tau$-tilting subcategories of $\mod\mathcal S$.
For a triangulated category $\C$ with a cluster tilting subcategory $\T$,  Yang, Zhou and Zhu \cite{YZZ} introduced the notion of (weak) $\T[1]$-cluster tilting subcategories of $\C$. They showed that there exists a bijection between the set of weak $\T[1]$-cluster tilting subcategories of $\C$ and the set of support $\tau$-tilting subcategories of $\mod\T$ which generalizes the bijection in \cite{YZ}. This bijection is an analogue to that in \cite{IJY}. Recently, Zhou and Zhu \cite{ZZ3} introduced the notion of two-term weak $\R[1]$-cluster tilting subcategories for a triangulated category $\C$ with a rigid subcategory $\R$, and showed that two-term weak $\R[1]$-cluster tilting subcategories correspond bijectively with support $\tau$-tilting subcategories. This result unifies the bijections given in \cite{IJY} and in \cite{YZZ}.

When we combine all these results, we can get a hint that support $\tau$-tilting subcategories in the quotient category may have some relations with relative rigid subcategories in the original category. Hence it is reasonable to investigate what subcategory is a support $\tau$-tilting subcategory in the quotient category related to, under a more general setting.
We want our results to be valid not only on triangulated categories, but also on exact categories.
In this article we  work on extriangulated categories introduced by Nakaoka and Palu in \cite{NP}. It is a simultaneous generalization of
exact categories and triangulated categories,  while there are some other examples of extriangulated categories which are neither exact nor triangulated (see \cite{NP,ZZ1}).
Our aim is to build the relation between relative cluster tilting theory in extriangulated categories and $\tau$-tilting theory.

In this article, let $k$ be a field and $\B$ be a Hom-finite, Krull-Schmidt, $k$-linear extriangulated category with enough projectives $\mathcal P$ and enough injectives $\mathcal I$ (please see Section 2 for more details of extriangulated category). Let $\R$ be a contravariantly finite rigid subcategory of $\B$ which contains all projective objects.

Our main results on relative cluster tilting theory and $\tau$-tilting theory can be
summed up as follows.

\begin{thm} {\rm (see Theorem \ref{main1} and Theorem \ref{main2})}
Let $\X$ be a subcategory of $\h=\CoCone(\R,\R)$ {\rm (}see Definition \ref{def1} for details{\rm )}.
Under some suitable conditions, we show that
\begin{itemize}
\item[\rm (1)] $\X$ is relative rigid if and only if $\overline \X$ is
a $\tau$-rigid subcategory of $\h/\R$;
\smallskip

\item[\rm (2)] $\X$ is maximal relative rigid if and only if $\overline \X$ is
a support $\tau$-tilting subcategory of $\h/\R$.
\smallskip

\end{itemize}

\end{thm}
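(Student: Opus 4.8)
The plan is to transport the entire problem through the additive quotient functor $\pi\colon\h\to\h/\R$, $X\mapsto\overline X$, using the equivalence $\h/\R\simeq\mod(\R/\mathcal P)$ established earlier. First I would fix, for each $X\in\h=\CoCone(\R,\R)$, a defining extriangle $X\to R^0\to R^1\dashrightarrow$ with $R^0,R^1\in\R$, and record that applying the restricted Yoneda functor $\B(-,?)|_\R$ turns this into a minimal projective presentation of $\overline X$ in $\mod(\R/\mathcal P)$. This is the bridge that lets me read off the Auslander--Reiten translate $\tau\overline X$ from data living in $\B$: cokernels and kernels of the induced maps between representable functors compute $\tau$, so $\tau\overline X$ becomes expressible through the extriangulated structure around $X$.

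The technical heart is an Auslander--Reiten-type duality
\[
\Hom_{\h/\R}(\overline X,\tau\overline Y)\;\cong\;D\,\EE_\R(Y,X),
\]
valid for $X,Y\in\h$, where $D=\Hom_k(-,k)$ and $\EE_\R$ denotes the relative extension bifunctor used to define relative rigidity. I would prove this by feeding the minimal projective presentation of $\overline Y$ into the standard formula for $\Hom(-,\tau-)$ via the transpose/Nakayama functor and then matching each term against the relative extensions classified by the extriangle of $Y$, using that $\R$ is rigid (so the relevant self-extensions of the $R^i$ vanish) and that $\R\supseteq\mathcal P$. Granting this isomorphism, part (1) is immediate: by the Adachi--Iyama--Reiten criterion $\overline\X$ is $\tau$-rigid exactly when $\Hom_{\h/\R}(\overline\X,\tau\overline\X)=0$, which by the display is precisely the vanishing $\EE_\R(\X,\X)=0$ defining relative rigidity, the $k$-dual $D$ and the argument-swap being irrelevant to the vanishing on a subcategory.

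For part (2) I would first use the same dictionary to show that $\pi$ restricts to a bijection between the relative rigid subcategories of $\h$ and the $\tau$-rigid subcategories of $\h/\R$, with the objects of $\R$ (which become projective, hence $0$ in $\mod(\R/\mathcal P)$ after quotienting) accounting exactly for the ``support'', i.e.\ the part of the projectives on which a support $\tau$-tilting subcategory is allowed to vanish. Then maximality must be matched on both sides: I would prove that a relative rigid $\X$ is maximal precisely when it admits no proper relative rigid enlargement, translate this through the displayed isomorphism into the statement that $\overline\X$ admits no proper $\tau$-rigid enlargement, and invoke the characterisation of support $\tau$-tilting subcategories as the maximal $\tau$-rigid ones (equivalently, those for which $\Fac\overline\X$ is functorially finite of the correct size). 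A Bongartz-type completion, lifting a completion of $\overline\X$ to an enlargement of $\X$ in $\h$, closes the loop in the reverse direction.

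I expect the main obstacle to be the naturality and exactness bookkeeping behind the isomorphism $\Hom_{\h/\R}(\overline X,\tau\overline Y)\cong D\,\EE_\R(Y,X)$: one must compute $\tau$ in $\mod(\R/\mathcal P)$ from presentations that are only well defined after passing to the quotient by $\mathcal P$, while simultaneously tracking which extensions of $Y$ are ``relative'' (split over $\R$) versus genuine. The secondary difficulty lies in the support/vanishing summands in (2), where I must verify that maximality in $\h$ forces precisely the correct projective objects into the support, neither more nor fewer, so that maximal relative rigid subcategories land on support $\tau$-tilting subcategories rather than on plain $\tau$-tilting ones.
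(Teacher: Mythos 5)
There is a genuine gap, and it sits at the very foundation of your plan. Your argument routes everything through the Auslander--Reiten translate $\tau$ in $\h/\R\simeq\mod(\underline\R)$: you want the duality $\Hom_{\h/\R}(\overline X,\tau\overline Y)\cong D\,\EE_{\R}(Y,X)$ and the criterion ``$\overline\X$ is $\tau$-rigid iff $\Hom(\overline\X,\tau\overline\X)=0$''. But under the paper's hypotheses ($\B$ an arbitrary Hom-finite Krull--Schmidt extriangulated category with enough projectives and injectives, $\R$ merely a contravariantly finite rigid subcategory containing $\mathcal P$), the category $\mod(\underline\R)$ is the module category of a $k$-category that need not be a dualizing variety, so $\tau$ and AR duality are simply not available; there is no Serre functor and no $2$-Calabi--Yau hypothesis anywhere. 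This is precisely why the paper (following Iyama--J{\o}rgensen--Yang) \emph{defines} $\tau$-rigidity by the surjectivity of $\Hom_{\overline\h}(\overline{f'},X')$ for a projective presentation $\Omega R_0'\to\Omega R_1'\to X\to0$, and proves Theorem \ref{main1} by a direct diagram chase (the diagram $(\maltese)$ together with Lemmas \ref{suj} and \ref{1}) matching that surjectivity against the factorization condition $\overline{[\R]}(\X,\Sigma\X)=[\R](\X,\Sigma\X)$. Note also that the paper's relative rigidity is not the vanishing of a bifunctor $\EE_\R$ you can dualize; it is a statement that certain $\R$-factoring maps $X\to\Sigma X'$ factor through $I_{X'}\to\Sigma X'$, and turning that into your duality would itself require Serre-type duality (compare Proposition \ref{TS}, where a Serre functor condition is needed even to get from relative rigid to rigid).

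The second gap is in part (2). You propose to characterize support $\tau$-tilting subcategories as the maximal $\tau$-rigid ones and to match ``no proper rigid enlargement'' on both sides. In this subcategory setting that equivalence is false without an extra approximation hypothesis: the paper's Theorem \ref{main2} explicitly requires that every object of $\Omega\R$ admit a left $\X$-approximation, and the correspondence is with maximal $\R$-rigid subcategories \emph{satisfying that condition}, not with maximal ones per se (the second example in Section 7 exhibits the kind of pathology that arises otherwise). Your Bongartz-type completion is the right instinct --- the paper does prove one (Proposition \ref{completion}, producing $\mathbf{P}(\Fac\overline\X)$) --- but it is established by an explicit construction of $\EE$-triangles $\Omega R\to X\oplus P\to U$ in $\B$ and a verification that $\add\Y$ is again $\R$-rigid, not by citing an off-the-shelf completion in $\mod(\underline\R)$, which again is not known in this generality. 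To repair your proof you would either have to add strong hypotheses (dualizing variety, Serre functor) that the theorem does not assume, or abandon $\tau$ and work with presentations and approximations directly, which is what the paper does.
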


In \cite{IT}, Ingalls and Thomas introduced the concept of support tilting modules and established a bijection between cluster tilting and classical tilting. Later, support tilting subcategories were introduced by Holm and J{\o}rgensen, which is a generalization of support tilting modules.  In \cite[Theorem 3.5]{HJ}, they give a bijection between support tilting subcategories and weak cluster tilting subcategories (which is a special kind of maximal relative rigid subcategories) under certain assumptions. In this article, we show that support tilting subcategories are support $\tau$-tilting, and their bijection becomes a special case of our results.

\begin{thm}{\rm (see Theorem \ref{main2.5})}
In the abelian quotient category $\h/\R$,
\begin{itemize}
\item[(1)] any support tilting subcategory is support $\tau$-tilting;
\smallskip

\item[(2)] any functorially finite support $\tau$-tilting subcategory $\overline \X$ with $\Ext^2_{\h/\R}(\overline \X,-)=0$ is support tilting.
\end{itemize}
\end{thm}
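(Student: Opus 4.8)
The plan is to reduce both parts to the principle, due to Adachi--Iyama--Reiten, that a module is (classical) tilting precisely when it is $\tau$-tilting of projective dimension at most one, transported to the abelian category $\h/\R\simeq\mod(\R/\mathcal P)$. Two preliminary observations drive everything. First, since $\h/\R$ is abelian with enough projectives, the condition $\Ext^2_{\h/\R}(\overline{\X},-)=0$ is equivalent to $\pd_{\h/\R}\overline{\X}\le 1$, because it says exactly that the first syzygy of each object of $\overline{\X}$ is projective. Second, as $\mod(\R/\mathcal P)$ carries Auslander--Reiten theory, I would use the Auslander--Reiten formula $\Ext^1_{\h/\R}(\overline X,\overline Y)\cong D\,\overline{\Hom}(\overline Y,\tau\overline X)$; its consequence is that on a subcategory of projective dimension at most one, rigidity ($\Ext^1_{\h/\R}(\overline{\X},\overline{\X})=0$) and $\tau$-rigidity ($\Hom_{\h/\R}(\overline{\X},\tau\overline{\X})=0$) coincide, the comparison between $\Hom$ and the stable $\overline{\Hom}$ being immaterial in that range.

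For part (1), let $\overline{\X}$ be support tilting. By definition it is rigid and of projective dimension at most one relative to the Serre quotient of $\h/\R$ prescribed by its support, so the preliminary equivalence makes it $\tau$-rigid there. It then remains to promote the tilting coresolution condition to the completeness condition defining support $\tau$-tilting subcategories. Here I would pass through the correspondences already in hand: via Theorem \ref{main2} a support $\tau$-tilting subcategory of $\h/\R$ is the image of a maximal relative rigid subcategory $\X$ of $\h$, while by \cite[Theorem 3.5]{HJ} a support tilting subcategory corresponds to a weak cluster tilting subcategory of $\h$. Since a weak cluster tilting subcategory is in particular maximal relative rigid, chasing $\overline{\X}$ back to $\X\subseteq\h$, verifying that $\X$ is weak cluster tilting, and reapplying Theorem \ref{main2} shows $\overline{\X}$ is support $\tau$-tilting; this simultaneously exhibits the Holm--J{\o}rgensen bijection as the restriction of ours.

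For part (2), assume $\overline{\X}$ is functorially finite, support $\tau$-tilting, and satisfies $\Ext^2_{\h/\R}(\overline{\X},-)=0$, i.e. $\pd_{\h/\R}\overline{\X}\le 1$. The preliminary equivalence immediately upgrades $\tau$-rigidity to $\Ext^1$-rigidity, so $\overline{\X}$ is a rigid subcategory of projective dimension at most one; what is missing for ``tilting'' is the coresolution (generation) condition. I would produce it by combining functorial finiteness with the support $\tau$-tilting structure: pulling $\overline{\X}$ back to the maximal relative rigid $\X\subseteq\h$ through Theorem \ref{main2}, functorial finiteness furnishes the approximations needed to build, for each relevant projective $P$, a short exact sequence $0\to P\to X^0\to X^1\to 0$ with $X^i\in\add\overline{\X}$; the hypothesis $\pd\le 1$ is precisely what keeps these sequences within the tilting range and forces the first cosyzygies back into $\add\overline{\X}$. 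Confirming the generation condition shows $\X$ is weak cluster tilting, whence $\overline{\X}$ is support tilting by \cite[Theorem 3.5]{HJ}.

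I expect the genuine obstacle to lie in part (2), namely in manufacturing the tilting coresolutions from the abstract support $\tau$-tilting data. The delicate points are that the support must be tracked carefully --- the idempotent data switching off certain simples on the $\tau$-tilting side must be matched with the support prescribed on the tilting side --- and that the approximation sequences must be shown not to raise projective dimension, which is where $\Ext^2_{\h/\R}(\overline{\X},-)=0$ enters in an essential, non-formal way rather than merely rephrasing $\pd\le 1$. By contrast, the equivalence of rigidity and $\tau$-rigidity in projective dimension at most one is a direct application of the Auslander--Reiten formula and should be routine.
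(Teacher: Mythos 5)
There are genuine gaps in both parts, coming from two mismatches with the setting of the paper. First, the notion of $\tau$-rigidity in force is the presentation-based one of Iyama--J{\o}rgensen--Yang (existence of an exact sequence $\Omega R_0'\to\Omega R_1'\to X\to 0$ with $\Hom_{\overline\h}(\overline{f'},X')$ surjective for all $X'\in\overline\X$), not $\Hom(M,\tau M)=0$; the category $\overline\h\simeq\mod\underline\R$ consists of modules over a $k$-category, where the Auslander--Reiten translate and the AR formula are not available in general, so your ``preliminary observation'' cannot be invoked as stated. (The implications you actually need follow elementarily from the definitions once $\pd\le 1$ is in hand, which is how the paper argues, so this piece is repairable.) Second, and more seriously, the definition of support tilting being used is Holm--J{\o}rgensen's: the axiom to be verified in part (2) is the \emph{subquotient} condition --- every subquotient $Y$ of an object of $\overline\X$ with $\Ext^1_{\overline\h}(\overline\X,Y)=0$ must be a quotient of an object of $\overline\X$ --- not a coresolution $0\to P\to X^0\to X^1\to 0$ of the projectives. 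Your sketch for (2) targets the wrong axiom and never addresses the subquotient condition, which is precisely where the paper spends its effort: it factors $X\twoheadrightarrow T\rightarrowtail Y$, takes a right $\overline\X$-approximation of $T$, builds a pullback diagram of short exact sequences, and chases a projective presentation of an $\Omega R$ through it to exhibit $Y$ as a quotient of an object of $\overline\X$.

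For part (1), outsourcing the completeness condition to \cite[Theorem 3.5]{HJ} is circular relative to the purpose of the theorem (which is to exhibit that bijection as a special case of the present results) and imports hypotheses absent here; ``weak cluster tilting subcategory of $\h$'' is not defined in this generality, and ``verifying that $\X$ is weak cluster tilting'' is exactly the content you would have to supply. The paper instead stays inside $\overline\h$: since a support tilting subcategory is $\tau$-rigid by definition, Proposition \ref{completion} embeds $\overline\X$ into the Bongartz-type completion $\mathbf{P}(\Fac\overline\X)$, and then the subquotient axiom combined with Lemma \ref{fac} (i.e.\ $\Ext^1_{\overline\h}(\overline\X,\Fac\overline\X)=0$) forces every object of that completion to be a split quotient of an object of $\overline\X$, so the completion coincides with $\overline\X$, which is therefore support $\tau$-tilting. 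An argument of this kind (or a direct construction of the required left $\overline\X$-approximations of the objects $\Omega R$) is what is missing from your part (1).
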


When $\R$ is cluster tilting, we discuss the relation between tilting subcategories in $\B/\R$ and cluster titling subcategories in $\B$.

\begin{thm}{\rm (see Theorem \ref{main4})}
Let $\B$ be a Frobenius extriangulated category and $\R$ be a cluster tilting subcategory. Assume that $\B/\R$ is hereditary. Then for any subcategory $\M$ such that $\M\cap\R=\mathcal P$, if $\M$ is a cluster tilting subcategory, then $\M$ is a tilting subcategory in $\B/\R$.
\end{thm}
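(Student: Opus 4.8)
The plan is to push $\M$ through the bijection of Theorem~\ref{main2} and then use the hereditary hypothesis together with $\M\cap\R=\mathcal P$ to turn the resulting support tilting subcategory into a tilting one. First I would reduce to the situation of Theorems~\ref{main1}--\ref{main2.5}: since $\B$ is Frobenius we have $\mathcal P=\mathcal I$, and since $\R$ is cluster tilting every object $X$ of $\B$ lies in an $\EE$-triangle $X\to R^0\to R^1\dashrightarrow$ with $R^0,R^1\in\R$, so that $\h=\CoCone(\R,\R)=\B$ and the abelian quotient $\h/\R$ of the general theory is exactly $\B/\R\simeq\mod(\R/\mathcal P)$. A cluster tilting subcategory has $\EE(\M,\M)=0$, hence is relative rigid, and as recalled in the introduction it is a special kind of maximal relative rigid subcategory; it is moreover functorially finite. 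Applying Theorem~\ref{main2} to $\X=\M$, I conclude that $\overline{\M}$ is a functorially finite support $\tau$-tilting subcategory of $\B/\R$.

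Next I would upgrade ``support $\tau$-tilting'' to ``tilting'' in two steps. As $\B/\R$ is hereditary we have $\Ext^2_{\B/\R}(\overline{\M},-)=0$, so Theorem~\ref{main2.5}(2) applies and shows that $\overline{\M}$ is a support tilting subcategory. It then remains to check that its support is full, i.e.\ that $\overline{\M}$ is tilting over all of $\B/\R$ and not merely over a proper quotient. This is exactly where the hypothesis $\M\cap\R=\mathcal P$ enters: under the bijection of Theorem~\ref{main2} the support datum of $\overline{\M}$ (the ``removed projectives'', playing the role of the summand $P$ in a support $\tau$-tilting pair $(M,P)$) is recorded by those indecomposable summands of $\M$ lying in $\R\setminus\mathcal P$, since these are precisely the objects of $\M$ that vanish in $\B/\R$ while still carrying support information. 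The equality $\M\cap\R=\mathcal P$ forces this removed part to be trivial, so $\overline{\M}$ has full support; and over the hereditary category $\B/\R$ a full-support support tilting subcategory is tilting, which is the desired conclusion.

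The main obstacle is the full-support identification in the previous paragraph: one must trace the bijection of Theorem~\ref{main2} closely enough to match the removed-projective part of the support $\tau$-tilting subcategory $\overline{\M}$ with $(\M\cap\R)/\mathcal P$, and then confirm that over a hereditary abelian category full support promotes support tilting to tilting (equivalently, that a full-support support $\tau$-tilting subcategory with vanishing $\Ext^2$ is tilting). Secondary points needing justification are the reduction $\h=\B$ and the facts that a cluster tilting subcategory is maximal relative rigid and remains functorially finite in $\B/\R$; the first two are standard for cluster tilting $\R$ in a Frobenius extriangulated category, and functorial finiteness descends along the quotient because the relevant $\R$-containing approximations pass to $\B/\R$.
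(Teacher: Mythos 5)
Your route is genuinely different from the paper's, and it has a gap at precisely the point you flag as ``the main obstacle.'' The paper does not pass through Theorems \ref{main2} and \ref{main2.5} at all: it verifies the three axioms of the Section~5 definition of a tilting subcategory directly. Axiom (b) follows from Lemma \ref{imp} ($\EE(\M,\M)=0$ implies $\Ext^1_{\oB}(\overline\M,\overline\M)=0$ since $\M\cap\R=\mathcal P$ means the relevant objects have no summands in $\R$ outside $\mathcal P$), and axiom (c) is obtained by taking the $\EE$-triangle $\Omega R\xrightarrow{f}M_1\to M_2\dashrightarrow$ coming from a left $\M$-approximation and invoking Lemma \ref{ses} --- which uses $\pd_{\oB}\leq 1$ via Lemma \ref{imp2} --- to upgrade it to a genuine short exact sequence $0\to\Omega R\to M_1\to M_2\to 0$ in $\oB$. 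That monomorphism on the left is the whole content of axiom (c), and it is exactly what your argument never produces: the Holm--J{\o}rgensen definition of support tilting gives you the subquotient condition, not a two-term coresolution of each projective $\Omega R$, and your closing assertion that ``over a hereditary category a full-support support tilting subcategory is tilting'' is precisely the statement to be proved, not a fact established anywhere in the paper.

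The gap is fillable along your lines, but it requires an argument you did not supply: from the support $\tau$-tilting datum one gets an exact sequence $\Omega R\xrightarrow{\overline f}M_1\to M_2\to 0$ with $\overline f$ a left $\overline\M$-approximation; since $\oB$ is hereditary the kernel $K$ of $\overline f$ is a subobject of a projective, hence projective, i.e.\ $K\in\overline{\Omega\R}$; using $\Ext^1_{\oB}(\Im\overline f,\overline\M)=0$ (from the short exact sequence $0\to\Im\overline f\to M_1\to M_2\to 0$ and heredity) every morphism $K\to\overline\M$ extends to $\Omega R$ and hence factors through $\overline f$, so $\Hom_{\oB}(K,\overline\M)=0$; and only then does $\M\cap\R=\mathcal P$ (full support) force $K=0$. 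Without this chain, or the paper's Lemma \ref{ses}, your proof stops at ``support tilting,'' which is a different notion from the tilting subcategories defined in Section~5. The earlier reductions ($\h=\B$, applicability of Theorem \ref{main2} because $\M\cap\R=\{R\in\R\mid\EE(R,\M)=0\}$ for cluster tilting $\M$, and descent of functorial finiteness) are fine, but they buy you a longer detour to reach a weaker intermediate statement than the paper's two-lemma direct verification.
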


We have some interesting applications of our results, one is the following:

\begin{prop}{\rm (see Proposition \ref{completion3})}
Let $\B$ be a triangulated category and $\R$ be cluster tilting subcategory. Let $\U$ be a contravariantly finite rigid subcategory such that $\U\cap\R={0}$. If projective dimension of any object of $\U$ is less than 2 in $\B/\R$, then $\U$ is contained in a tilting subcategory in $\B/\R$.
\end{prop}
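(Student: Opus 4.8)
The plan is to push the whole problem into the abelian quotient $\B/\R$, recognise the image of $\U$ there as a \emph{partial tilting} subcategory, and then complete it by a Bongartz-type construction that preserves the bound on projective dimension.

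First I would record the simplification coming from $\R$ being cluster tilting. In the triangulated category $\B$ this means every object $X$ sits in a triangle $X\to R^0\to R^1\to X[1]$ with $R^0,R^1\in\R$, i.e. $X\in\CoCone(\R,\R)=\h$; hence $\h=\B$ and $\h/\R=\B/\R$, which by the running hypotheses is an abelian (module) category with enough projectives (Koenig--Zhu, together with the equivalence $\h/\R\simeq\mod(\R/\mathcal P)$ used throughout the paper). Since $\U\cap\R=0$, no indecomposable summand of $\U$ becomes zero in the quotient, so the canonical functor identifies $\U$ with $\overline{\U}\subseteq\B/\R$; moreover contravariant finiteness of $\U$ in $\B$ descends to contravariant finiteness of $\overline{\U}$ in $\B/\R$, because the quotient functor carries right $\U$-approximations to right $\overline{\U}$-approximations.

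Next I would check that $\overline{\U}$ is partial tilting. Being rigid in $\B$, the subcategory $\U$ is in particular relative rigid, so part (1) of Theorem \ref{main1} gives that $\overline{\U}$ is a $\tau$-rigid subcategory of $\B/\R$. Now I bring in the hypothesis $\pd\overline{\U}<2$: for modules of projective dimension at most one, $\tau$-rigidity is equivalent to $\Ext^1$-rigidity (the standard $\tau$-tilting fact that a module is $\tau$-rigid of projective dimension $\le 1$ if and only if it is a partial tilting module). Hence $\Ext^1_{\B/\R}(\overline{\U},\overline{\U})=0$, and $\overline{\U}$ is a partial tilting subcategory of $\B/\R$: self-orthogonal with $\pd\overline{\U}\le 1$.

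Finally I would complete $\overline{\U}$ to a tilting subcategory by Bongartz completion in the module category $\B/\R$. For each indecomposable projective $P$ I form a universal extension $0\to P\to X_P\to \overline{U}_P\to 0$ with $\overline{U}_P\in\add\overline{\U}$, the existence of which is guaranteed by contravariant finiteness of $\overline{\U}$, and set $\T=\add\bigl(\overline{\U}\cup\{X_P\}\bigr)$. Because $P$ is projective and $\pd\overline{\U}\le 1$, each $X_P$ again satisfies $\pd X_P\le 1$, so the completion never leaves the $\pd\le 1$ world; the classical Bongartz argument then shows $\T$ is self-orthogonal, of projective dimension at most one, and admits, for every projective (hence every object) of $\B/\R$, a finite coresolution by $\add\T$. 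Thus $\T$ is a tilting subcategory of $\B/\R$ containing $\overline{\U}$, which is the assertion.

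The main obstacle is this last step: carrying out Bongartz completion for a (possibly infinite) subcategory rather than a single module. The existence of the universal extensions, and the verification that the resulting $\T$ is functorially finite and coresolves every object, is precisely where contravariant finiteness of $\U$ must be used; and keeping careful track of projective dimensions along the way—so that the completion yields an honest tilting subcategory of $\pd\le 1$, and not merely a support tilting one—is the delicate point, since $\B/\R$ need not be hereditary.
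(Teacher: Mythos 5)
Your overall strategy---identify $\overline\U$ as a self-orthogonal subcategory of projective dimension at most one and then perform a Bongartz-type completion---is the same one the paper uses, but the step you yourself flag as ``the main obstacle'' is precisely the content of the paper's proof, and the way you propose to get past it does not work as stated. You assert that the universal extension $0\to P\to X_P\to \overline U_P\to 0$ exists ``guaranteed by contravariant finiteness of $\overline\U$''; however, contravariant finiteness of $\overline\U$ in $\B/\R$ produces right approximations for the functors $\Hom_{\oB}(-,M)$, not for $\Ext^1_{\oB}(-,P)$, and for a genuine subcategory (rather than a single module over a finite-dimensional algebra) there is no a priori finite generation of $\Ext^1_{\oB}(\overline\U,P)$ to fall back on. So the existence of the universal extension is exactly what still has to be proved, and your proposal stops there.

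The paper's resolution is to build the extension \emph{upstairs} in $\B$, where the rotation $\Ext^1_{\B}(U,\Sigma^{-1}R)\cong\Hom_{\B}(U,R)$ converts the needed Ext-approximation into a Hom-approximation: a right $\U$-approximation $u'\colon U'\to R$ (available by contravariant finiteness of $\U$ in $\B$) is completed to a triangle $\Sigma^{-1}R\to V'\to U'\xrightarrow{u'}R$, which is the universal extension in disguise. Two lemmas then carry this down to $\oB$: Lemma \ref{imp} transfers $\Ext^1_{\B}(\U,V')=0$ to $\Ext^1_{\oB}(\overline\U,V')=0$, and Lemma \ref{ses} uses the hypothesis $\pd_{\oB}(\U)\le 1$ to show the triangle induces a short exact sequence $0\to\Sigma^{-1}R\to V'\to U'\to 0$ in $\oB$ (this is where the projective dimension hypothesis actually enters---not, as in your sketch, to control the projective dimension of the completion). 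Finally, rather than taking $\add(\overline\U\cup\{X_P\})$, the paper defines the completion abstractly as $\mathrm{P}(\overline\U^{\bot_1})$, the Ext-projectives of $\overline\U^{\bot_1}$, and checks that each $V'$ (or $\Sigma^{-1}R$ itself when $\Hom_{\B}(\U,R)=0$) lies in it; condition (c) of a tilting subcategory only asks for a two-term coresolution of each projective, which the sequence above supplies directly. Your reduction to $\h=\B$ and the self-orthogonality of $\overline\U$ (which follows even more directly from Lemma \ref{imp}, or from Lemma \ref{fac} since $\tau$-rigidity already gives $\Ext^1_{\oB}(\overline\U,\Fac\overline\U)=0$ without any projective dimension assumption) are fine; the gap is the construction of the completion itself.
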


This article is organized as follows. In Section 2, we introduce some necessary background of extriangulated category. In Section 3, we give a one-to-one correspondence between $\tau$-rigid subcategories of $\h/\R$ and relative rigid subcategories of $\h$. In Section 4, we first show that any $\tau$-rigid subcategory of $\h/\R$ which satisfy certain condition is contained in a support $\tau$-tilting subcategory, then we give a one-to-one correspondence between support $\tau$-tilting subcategories of $\h/\R$ and maximal relative rigid subcategories of $\h$. We also show that any support tilting subcategory in $\h/\R$ is a support $\tau$-tilting subcategory. In Section 5, we study the relation between tilting subcategories in $\B/\R$ and cluster titling subcategories $\B$ when $\R$ is cluster tilting. In Section 6, we give some applications of our main results, which mainly concentrate on the case when $\B$ is a triangulated category. In Section 7, we give two examples to explain our results.

\section{Preliminaries}

\subsection{Extriangulated categories}
Let us briefly recall the definition and some basic properties of extriangulated category from \cite{NP}.
We omit some details here, but the readers can find
them in \cite{NP}.

Let $\B$ be an additive category equipped with an additive bifunctor
$$\mathbb{E}: \B^{\rm op}\times \B\rightarrow {\rm Ab},$$
where ${\rm Ab}$ is the category of abelian groups. For any objects $A, C\in\B$, an element $\delta\in \mathbb{E}(C,A)$ is called an $\mathbb{E}$-extension.
Let $\mathfrak{s}$ be a correspondence which associates with an equivalence class $$\mathfrak{s}(\delta)=\xymatrix@C=0.8cm{[A\ar[r]^x
 &B\ar[r]^y&C]}$$ to any $\mathbb{E}$-extension $\delta\in\mathbb{E}(C, A)$. This $\mathfrak{s}$ is called a {\it realization} of $\mathbb{E}$, if it makes the diagrams in \cite[Definition 2.9]{NP} commutative.
 A triplet $(\B, \mathbb{E}, \mathfrak{s})$ is called an {\it extriangulated category} if it satisfies the following conditions.
\begin{enumerate}
\setlength{\itemsep}{2.5pt}
\item $\mathbb{E}\colon\B^{\rm op}\times \B\rightarrow \rm{Ab}$ is an additive bifunctor.

\item $\mathfrak{s}$ is an additive realization of $\mathbb{E}$.

\item $\mathbb{E}$ and $\mathfrak{s}$  satisfy the compatibility conditions in \cite[Definition 2.12]{NP}.
 \end{enumerate}
\smallskip

We collect some basic concepts, which can be found in \cite{NP}.

\begin{defn}\label{dein}
Let $(\B,\EE,\mathfrak{s})$ be an extriangulated category.
\begin{itemize}
\setlength{\itemsep}{2.5pt}
\item[{\rm (1)}] A sequence $A\xrightarrow{~x~}B\xrightarrow{~y~}C$ is called a {\it conflation} if it realizes some $\EE$-extension $\del\in\EE(C,A)$. In this case, $x$ is called an {\it inflation} and $y$ is called a {\it deflation}.

\item[{\rm (2)}] If a conflation $A\xrightarrow{~x~}B\xrightarrow{~y~}C$ realizes $\delta\in\mathbb{E}(C,A)$, we call the pair $( A\xrightarrow{~x~}B\xrightarrow{~y~}C,\delta)$ an {\it $\EE$-triangle}, and write it in the following way.
$$A\overset{x}{\longrightarrow}B\overset{y}{\longrightarrow}C\overset{\delta}{\dashrightarrow}$$
We usually do not write this $``\delta"$ if it is not used in the argument.
\item[{\rm (3)}] Let $A\overset{x}{\longrightarrow}B\overset{y}{\longrightarrow}C\overset{\delta}{\dashrightarrow}$ and $A^{\prime}\overset{x^{\prime}}{\longrightarrow}B^{\prime}\overset{y^{\prime}}{\longrightarrow}C^{\prime}\overset{\delta^{\prime}}{\dashrightarrow}$ be any pair of $\EE$-triangles. If a triplet $(a,b,c)$ realizes $(a,c)\colon\delta\to\delta^{\prime}$, then we write it as
$$\xymatrix{
A \ar[r]^x \ar[d]^a & B\ar[r]^y \ar[d]^{b} & C\ar@{-->}[r]^{\del}\ar[d]^c&\\
A'\ar[r]^{x'} & B' \ar[r]^{y'} & C'\ar@{-->}[r]^{\del'} &}$$
and call $(a,b,c)$ a {\it morphism of $\EE$-triangles}.

\item[{\rm (4)}] An object $P\in\B$ is called {\it projective} if
for any $\EE$-triangle $A\overset{x}{\longrightarrow}B\overset{y}{\longrightarrow}C\overset{\delta}{\dashrightarrow}$ and any morphism $c\in \Hom_\B(P,C)$, there exists $b\in \Hom_\B(P,B)$ satisfying $yb=c$.
We denote the subcategory of projective objects by $\mathcal P$. Dually, we can define injective object and the subcategory of injective objects is denoted by $\I$.

\item[{\rm (5)}] We say that $\B$ {\it has enough projective objects} if
for any object $C\in\B$, there exists an $\EE$-triangle
$A\overset{x}{\longrightarrow}P\overset{y}{\longrightarrow}C\overset{\delta}{\dashrightarrow}$
satisfying $P\in\mathcal P$. Dually we can define $\B$ {\it having enough injective objects}.

\end{itemize}
\end{defn}

In this article, let $k$ be a field and $(\B,\mathbb{E},\mathfrak{s})$ be a Krull-Schmidt, Hom-finite, $k$-linear extriangulated category with enough projectives $\mathcal P$ and enough injectives $\mathcal I$. When we say $\D$ is a subcategory of $\B$, we always mean that $\D$ is closed under isomorphisms, direct sums and direct summands.
\smallskip

By \cite{NP}, we give the following useful remark, which will be used later in the proofs.

\begin{rem}\label{useful}
Let $\xymatrix@C=0.5cm@R0.5cm{A\ar[r]^a &B \ar[r]^b &C \ar@{-->}[r] &}$ and $\xymatrix@C=0.5cm@R0.5cm{X\ar[r]^x &Y \ar[r]^y &Z \ar@{-->}[r] &}$ be two $\EE$-triangles. Then
\begin{itemize}
\item[(a)] In the following commutative diagram
$$\xymatrix{
X\ar[r]^x \ar[d]_f &Y \ar[d]^g \ar[r]^y &Z \ar[d]^h \ar@{-->}[r] &\\
A\ar[r]^a &B \ar[r]^b &C \ar@{-->}[r] &}
$$
$f$ factors through $x$ if and only if $h$ factors through $b$.
\item[(b)] In the following commutative diagram
$$\xymatrix{
A\ar[r]^a \ar[d]_s &B \ar[d]^r \ar[r]^b &C \ar[d]^t \ar@{-->}[r] &\\
X\ar[r]^x \ar[d]_f &Y \ar[d]^g \ar[r]^y &Z \ar[d]^h \ar@{-->}[r] &\\
A\ar[r]^a &B \ar[r]^b &C \ar@{-->}[r] &}
$$
$fs=1_A$ implies $B$ is a direct summand of $C\oplus Y$ and $C$ is a direct summand of $Z\oplus B$; $ht=1_C$ implies $A$ is a direct summand of $A\oplus Y$ and $A$ is a direct summand of $X\oplus B$.
\item[(c)] Let $f\colon A\rightarrow D$ be any morphism. Then we have a morphism of $\EE$-triangles
$$\xymatrix{
A \ar[r]^{x} \ar[d]_f &B \ar[r]^{y} \ar[d]^g &C \ar@{=}[d]\ar@{-->}[r]&\\
D \ar[r]_{d} &E \ar[r]_{e} &C\ar@{-->}[r] &
}
$$
Moreover, the sequence $A\xrightarrow{\binom{f}{x}}D\oplus B\xrightarrow{(d~-g)}E\dashrightarrow$ is also an $\EE$-triangle.
\end{itemize}
\end{rem}

\begin{defn}\label{def1}
Let $\B'$ and $\B''$ be two subcategories of $\B$.
\begin{itemize}
\item[(a)] Denote by $\CoCone(\B',\B'')$ the subcategory
$$\{X\in \B \text{ }|\text{ } \textrm{there exists an}~ \text{ } \EE\text{-triangle } \xymatrix@C=0.5cm@R0.5cm{ X \ar[r] &B' \ar[r] &B'' \ar@{-->}[r] &} \text{, }B'\in \B' \text{ and }B''\in \B'' \}.$$
\item[(b)] Denote by $\Cone(\B',\B'')$ the subcategory
$$\{X\in \B \text{ }|\text{ } \textrm{there exists an}~ \text{ } \EE\text{-triangle } \xymatrix@C=0.5cm@R0.5cm{B' \ar[r] &B'' \ar[r] &X \ar@{-->}[r] &} \text{, }B'\in \B' \text{ and }B''\in \B''  \}.$$
\item[(c)] Let $\Omega \B'=\CoCone(\mathcal P,\B')$. We write an object $D$ in the form $\Omega B$ if it admits an $\EE$-triangle $\xymatrix@C=0.5cm@R0.5cm{D \ar[r] &P \ar[r] &B \ar@{-->}[r] &}$ where $P\in \mathcal P$.
\item[(d)] Let $\Sigma \B'=\Cone(\B',\mathcal I)$. We write an object $D'$ in the form $\Sigma B'$ if it admits an $\EE$-triangle $\xymatrix@C=0.5cm@R0.5cm{B' \ar[r] &I \ar[r] &D' \ar@{-->}[r] & }$ where $I\in \mathcal I$.
\end{itemize}

\end{defn}

\begin{defn}\cite[Definition 2.10]{ZZ2}
 Let $\C$ be a subcategory of $\B$.
\begin{itemize}
\item[\rm (1)] $\C$ is called rigid if $\EE(\C,\C)=0$;

\item[\rm (2)] $\C$ is called \emph{cluster tilting}, if it satisfies the
following conditions:
\begin{itemize}

\item[\rm (a)] $X\in\C$ if and only if $\EE (X, \C) = 0$;

\item[\rm (b)] $X\in\C$ if and only if $\EE(\C, X) = 0$;

\item[\rm (c)] $\C$ is functorially finite.
\end{itemize}

\end{itemize}
\end{defn}

From this definition, we know that if $\C$ is cluster tilting, then it contains all the projectives and all the injectives. Moreover, $\CoCone(\C,\C)=\Cone(\C,\C)=\B$.

%

%



\smallskip

In this article, we always assume $\R$ is a \textbf{contravariantly finite rigid} subcategory of $\B$ such that $\mathcal P\subsetneq \R$, unless otherwise specified.

\begin{lem}\label{summand}
$\CoCone(\R,\R)$ and $\Omega \R$ are closed under direct summands.
\end{lem}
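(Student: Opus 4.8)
The plan is to turn the given conflation into a \emph{minimal} one and to exploit that, for an object of $\h$, its defining inflation is automatically a left $\R$-approximation; I treat $\CoCone(\R,\R)=\h$ in detail, the case $\Omega\R=\CoCone(\mathcal P,\R)$ being identical once one notes that the middle terms produced below are summands of projectives, hence projective. So let $X=X_1\oplus X_2\in\h$ and fix an $\EE$-triangle $X\xrightarrow{f}R_0\to R_1\dashrightarrow$ with $R_0,R_1\in\R$. First I would observe that $f$ is a left $\R$-approximation: applying $\Hom_\B(-,R)$ for $R\in\R$ gives the exact sequence $\Hom_\B(R_0,R)\xrightarrow{f^\ast}\Hom_\B(X,R)\to\EE(R_1,R)$, and $\EE(R_1,R)=0$ since $\R$ is rigid, so $f^\ast$ is surjective. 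This is the sole place rigidity is used, and it is precisely what allows $f$ to be replaced by a controllable approximation.

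Next I would pass to minimal approximations. Choosing minimal left $\R$-approximations $h_i\colon X_i\to R_0^i$ and setting $h=h_1\oplus h_2$, the map $h\colon X\to R_0^1\oplus R_0^2$ is the minimal left $\R$-approximation of $X$. Since $f$ is a left $\R$-approximation and $h$ the minimal one, the structure theory of approximations in a Krull--Schmidt category yields $R_0\cong(R_0^1\oplus R_0^2)\oplus R_0'$ with $f\cong\svecv{h}{0}$; in particular $h$, and hence each $h_i$, is a direct summand of the inflation $f$, so each $h_i$ is itself an inflation by weak idempotent completeness. Completing $h_i$ to an $\EE$-triangle $X_i\xrightarrow{h_i}R_0^i\to C_i\dashrightarrow$ and summing gives an $\EE$-triangle $X\xrightarrow{h}R_0^1\oplus R_0^2\to C_1\oplus C_2\dashrightarrow$.

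The last step is to compare third terms, using that the third term of an $\EE$-triangle is determined up to isomorphism by its inflation: if $X\xrightarrow{h}Y\xrightarrow{g}Z$ and $X\xrightarrow{h}Y\xrightarrow{g'}Z'$ are $\EE$-triangles, then each of $g,g'$ is a weak cokernel of $h$, so they factor through one another, and since deflations are epimorphisms the induced maps $Z\rightleftarrows Z'$ are mutually inverse. Applying this to $f$ shows that splitting off the trivial summand $0\to R_0'\xrightarrow{1}R_0'$ gives $R_1\cong K\oplus R_0'$ with $K=\Cone(h)$, while applying it to $h$ identifies $K\cong C_1\oplus C_2$. Thus $C_1\oplus C_2$ is a direct summand of $R_1\in\R$, and as $\R$ is closed under summands each $C_i\in\R$; the triangle $X_i\xrightarrow{h_i}R_0^i\to C_i\dashrightarrow$ then witnesses $X_i\in\CoCone(\R,\R)$. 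For $\Omega\R$ one argues verbatim from $X\to P\to R$ with $P\in\mathcal P$, the point being that the $R_0^i$ are summands of $P$, hence projective.

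The main obstacle is that the decomposition $X=X_1\oplus X_2$ need not be compatible with an arbitrary conflation --- a general left approximation does not split along it, and the third term is not functorial in the inflation. Both are overcome by the two structural inputs: minimality forces the splitting $f\cong\svecv{h}{0}$, and uniqueness of the third term transports membership in $\R$ from $R_1$ to the summands $C_i$. The genuine work lies in justifying these two inputs in the extriangulated setting, not in any computation.
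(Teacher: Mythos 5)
Your route is genuinely different from the paper's and considerably heavier. The paper never touches approximation theory or rigidity: it simply composes the split monomorphism $\svecv{1}{0}\colon X_1\to X_1\oplus X_2$ with the given inflation $X_1\oplus X_2\to R^1$ to obtain an inflation $q_1\colon X_1\to R^1$ (a composite of inflations is an inflation, with no extra hypotheses), completes $q_1$ to an $\EE$-triangle with cone $R$, and then uses the standard retract argument (Remark \ref{useful}(b)) on the resulting $3\times 3$ diagram to conclude that $R$ is a direct summand of $R^1\oplus R^2$, hence lies in $\R$. In particular the lemma has nothing to do with $\R$ being rigid; your very first step, where you use $\EE(R_1,R)=0$ to make $f$ a left $\R$-approximation, already imports a hypothesis the statement does not need, and the whole minimal-approximation machinery exists only to compensate for not having taken the composite with the split mono directly.

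Two steps in your argument are not sound as written in the extriangulated setting. First, your proof that the cone of an inflation is unique up to isomorphism relies on ``deflations are epimorphisms''; this is true in exact categories but false in triangulated (hence in general extriangulated) categories, so the cancellation $\psi\phi g=g\Rightarrow\psi\phi=1$ is unjustified. The uniqueness statement itself is correct and standard, but it must be obtained from (ET3) together with the five-lemma-type result of Nakaoka--Palu (an isomorphism of the first two terms of a morphism of $\EE$-triangles forces an isomorphism on the third), not from an epimorphism argument. Second, the claim that each $h_i$ is an inflation because it is a direct summand of the inflation $f$ is exactly the condition (WIC) (``$gf$ an inflation $\Rightarrow f$ an inflation''), which is an \emph{additional} axiom in the Nakaoka--Palu framework and is not assumed in this paper. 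It does hold here because $\B$ is Krull--Schmidt, hence weakly idempotent complete, but that equivalence is itself a nontrivial theorem and must be invoked explicitly; as written, ``by weak idempotent completeness'' papers over the only genuinely delicate point of your approach. Both defects are repairable, but the paper's two-line argument avoids them entirely.
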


\begin{proof}
We only show the first one, the second is similar.\\
Let $X_1\oplus X_2\in \CoCone(\R,\R)$. It admits an $\EE$-triangle $\xymatrix{X_1\oplus X_2 \ar[r]^-{\svech{q_1}{q_2}} &R^1 \ar[r]^{p} &R^2 \ar@{-->}[r] &}$ where $R^1,R^2\in \R$. Since $q_1=(q_1~q_2)\binom{1}{0}$, we have that $q_1$ is an inflation. So $q_1$ admits an $\EE$-triangle $\xymatrix{X_1 \ar[r]^{q_1} &R^1 \ar[r]^{p_1} &R \ar@{-->}[r] &}$. Thus we get the following commutative diagram:
$$\xymatrix{
X_1 \ar[d]_-{\svecv{1}{0}} \ar[r]^{q_1} &R^1 \ar@{=}[d] \ar[r]^{p_1} &R \ar[d] \ar@{-->}[r] & \\
X_1\oplus X_2 \ar[r]^-{\svech{q_1}{q_2}} \ar[d]_-{\svech{1}{0}} &R^1 \ar[r]^p \ar[d]^a &R^2 \ar[d] \ar@{-->}[r] &\\
X_1 \ar[r]^{q_1} &R^1 \ar[r]^{p_1} &R \ar@{-->}[r] &.
}
$$
Therefore $R$ is a direct summand of $R^1\oplus R^2\in \R$. Then $R\in \R$ and we have $X_1\in \CoCone(\R,\R)$. Hence $\CoCone(\R,\R)$ is closed under direct summands.
\end{proof}

For objects $A,B\in\B$ and a subcategory $\B_0$ of $\B$, let $[\B_0](A,B)$ be the subgroup of $\Hom_{\B}(A,B)$ consisting of morphisms which factor through objects in $\B_0$. 

Let $\B'$ be a subcategory of $\B$.
\begin{itemize}
\item We denote by $\underline \B'$ the category which has the same objects as $\B'$, and $$\Hom_{\uB'}(A,B)=\Hom_{\B}(A,B)/[\mathcal P](A,B)$$ where $A,B\in \B'$. For any morphism $f\in \Hom_{\B}(A,B)$, we denote its image in $\Hom_{\uB'}(A,B)$ by $\underline f$. $\uB'$ is a subcategory of $\uB$.
\item We denote by $\overline \B'$ the category which has the same objects as $\B'$, and $$\Hom_{\oB'}(A,B)=\Hom_{\B}(A,B)/[\R](A,B)$$ where $A,B\in \B'$. For any morphism $f\in \Hom_{\B}(A,B)$, we denote its image in $\Hom_{\oB'}(A,B)$ by $\overline f$. $\oB'$ is a subcategory of $\oB$.
\end{itemize}

Let $\h=\CoCone(\R,\R)$.
\begin{prop}\label{b1}
We have the following properties:
\begin{itemize}
\item[(a)] $\overline \h\simeq \mod\underline {\Omega \R}\simeq \mod\underline \R$, they are abelian categories.
\item[(b)] The subcategory $\underline {\Omega \R}$ is the enough projectives in $\overline \h$.
\item[(c)] There is a additive functor $H:\B\to \overline \h$ such that
\begin{itemize}
\item[(i)] any $\EE$-triangle $\xymatrix{A\ar[r] &B\ar[r] &C\ar@{-->}[r] &}$ admits an exact sequence $H(A) \to H(B)\to H(C)$ in $\overline \h$;
\item[(ii)] $H(R)=0$ if $R\in \R$;
\item[(iii)]  $H(B)=B$ if $B\in \h$, $H(f)=\overline f$ if $f$ is a morphism in $\h$.
\end{itemize}
\end{itemize}
\end{prop}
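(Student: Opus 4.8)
The plan is to make the equivalence explicit through the extension bifunctor rather than through abstract projectivization, since this simultaneously yields (a), (b) and the functor $H$ of (c). Define $\Phi\colon\overline{\h}\to\mod\underline{\R}$ on objects by $\Phi X=\EE(-,X)|_{\R}$, regarded as a functor on $\underline{\R}$. This is well defined: a morphism between objects of $\R$ that factors through $\mathcal{P}$ acts as $0$ on $\EE(-,X)$ because $\EE(P,X)=0$ for $P\in\mathcal{P}$, so $\EE(-,X)|_{\R}$ descends to $\underline{\R}$; and if $\varphi\colon X\to X'$ factors through $\R$ then $\EE(-,\varphi)$ factors through $\EE(-,\R)|_{\R}=0$ by rigidity, so $\Phi$ is defined on $\overline{\h}$. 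For $X\in\h$ with defining $\EE$-triangle $X\to R^{0}\xrightarrow{g}R^{1}\dashrightarrow$, applying $\Hom_{\B}(R,-)$ and using $\EE(R,R^{0})=0$ gives the presentation $\uHom_{\underline{\R}}(-,R^{0})\xrightarrow{\underline{g}}\uHom_{\underline{\R}}(-,R^{1})\to\Phi X\to0$, so $\Phi X\in\mod\underline{\R}$; taking the defining triangle of $\Omega R$ (where the middle term $P\in\mathcal{P}$ kills its representable) shows $\Phi(\Omega R)=\uHom_{\underline{\R}}(-,R)$ is representable, which will account for part (b).

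Next I would prove $\Phi$ is fully faithful. Faithfulness is the clean step: if $\Phi\varphi=0$ then in particular $\EE(R^{1},\varphi)=0$, hence $\varphi_{*}\delta=0$ for the class $\delta\in\EE(R^{1},X)$ of the triangle $X\xrightarrow{f}R^{0}\to R^{1}\dashrightarrow\delta$; by the standard obstruction criterion (the extriangulated analogue of Remark~\ref{useful}, that $\varphi$ factors through $f$ iff $\varphi_{*}\delta=0$), $\varphi$ factors through $R^{0}\in\R$, that is $\overline{\varphi}=0$. Fullness is more delicate: given $\eta\colon\Phi X\to\Phi X'$, projectivity of the representable functors lets me lift $\eta$ along the two presentations to maps $b_{1}\colon R^{1}\to S^{1}$ and $b_{0}\colon R^{0}\to S^{0}$ with $g'b_{0}\equiv b_{1}g\pmod{\mathcal{P}}$; since $g'$ is a deflation and $\mathcal{P}$ is projective I can correct $b_{0}$ by a morphism through $\mathcal{P}$ to make the square commute on the nose, and then the morphism-of-$\EE$-triangles axiom produces $\varphi\colon X\to X'$ with $\Phi\varphi=\eta$. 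Essential surjectivity follows by writing any $F\in\mod\underline{\R}$ as the cokernel of some $\underline{g}\colon R^{0}\to R^{1}$, replacing $g$ by the deflation $(g,p)\colon R^{0}\oplus P\to R^{1}$ (which represents the same class in $\underline{\R}$ since $\underline{P}=0$), and taking $X$ to be its cocone, an object of $\h$.

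I would then establish abelianness and enough projectives. The category $\mod\underline{\R}$ is abelian exactly when $\underline{\R}$ has weak kernels, and this is where contravariant finiteness of $\R$ enters: for $\underline{g}\colon R^{0}\to R^{1}$, take the cocone $X\in\h$ of a deflation representing $g$ and choose a right $\R$-approximation $R^{-1}\to X$ (a deflation after adding a projective); the induced $R^{-1}\to R^{0}$ is a weak kernel of $g$ because $\Hom_{\B}(R,R^{-1})\to\Hom_{\B}(R,X)$ is surjective while $\Hom_{\B}(R,X)\to\Hom_{\B}(R,R^{0})\to\Hom_{\B}(R,R^{1})$ is exact. Through $\Phi$ the representables $\uHom_{\underline{\R}}(-,R)$ are precisely the projectives of $\overline{\h}$, and the identity $\Phi(\Omega R)=\uHom_{\underline{\R}}(-,R)$, together with the fact that every object has a presentation by such functors, identifies the enough projectives with $\underline{\Omega\R}$ (using Lemma~\ref{summand} so that $\Omega\R$ is a genuine subcategory); moreover $\Omega\colon\underline{\R}\to\underline{\Omega\R}$ becomes an equivalence, whence $\mod\underline{\Omega\R}\simeq\mod\underline{\R}$, completing (a) and (b).

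Finally, for (c) I would set $H:=\Phi^{-1}\circ\bigl(\EE(-,-)|_{\underline{\R}}\bigr)$. One must first check $\EE(-,B)|_{\underline{\R}}\in\mod\underline{\R}$ for every $B\in\B$: using enough injectives, the triangle $B\to I\to\Sigma B\dashrightarrow$ gives $\Hom_{\B}(-,I)|_{\R}\to\Hom_{\B}(-,\Sigma B)|_{\R}\to\EE(-,B)|_{\R}\to0$, and contravariant finiteness (through the weak kernels just produced) makes each $\Hom_{\B}(-,W)|_{\underline{\R}}$ finitely presented, hence so is the cokernel. Then (ii) is immediate from $\EE(\R,\R)=0$; (iii) holds because $\EE(-,X)|_{\underline{\R}}=\Phi X$ for $X\in\h$, so $H|_{\h}$ is the quotient functor and $H(f)=\overline{f}$; and (i) follows by applying the middle-exact sequence $\EE(R,A)\to\EE(R,B)\to\EE(R,C)$ for all $R\in\R$ and transporting exactness across the equivalence $\Phi$. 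The main obstacle is twofold, and both parts rest on contravariant finiteness of $\R$: proving fullness of $\Phi$ (upgrading commutativity modulo $\mathcal{P}$ to an honest morphism of $\EE$-triangles) and securing the finite-presentation and weak-kernel statements that make $\mod\underline{\R}$ abelian and let $H$ be defined on all of $\B$. Faithfulness, by contrast, is essentially automatic once the extension-factorization criterion is invoked.
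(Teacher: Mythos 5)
Your proof is essentially correct, but it takes a genuinely different route from the paper: the paper's proof of this proposition is purely a citation argument --- (a) follows from the cotorsion pair $(\R,\R^{\perp_1})$ together with \cite[Theorem 3.2]{LN}, \cite[Theorem 1.2]{LZ} and \cite[Theorem 3.4]{ZZ2}, (b) from \cite[Theorem 4.10]{LN}, and (c) from \cite[Theorem 3.5, Corollary 3.8]{LN} --- whereas you reconstruct everything from scratch by exhibiting the explicit equivalence $\Phi X=\EE(-,X)|_{\underline\R}$ and then setting $H=\Phi^{-1}\circ\EE(-,-)|_{\underline\R}$. What your approach buys is a self-contained, uniform treatment in which (a), (b) and (c) all fall out of one construction: the defining $\EE$-triangle $X\to R^0\to R^1\dashrightarrow$ gives the projective presentation, the obstruction criterion ($\varphi$ factors through $f$ iff $\varphi_*\delta=0$) gives faithfulness, (ET3)$^{\op}$ after correcting $b_0$ modulo $\mathcal P$ gives fullness, contravariant finiteness gives weak kernels and hence abelianness, and $\Phi(\Omega R)=\Hom_{\underline\R}(-,R)$ identifies the projectives; the paper's route is shorter but opaque, outsourcing the actual content to the heart construction of \cite{LN}. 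One step of yours needs repair: the functor ``$\Hom_\B(-,W)|_{\underline\R}$'' is not well defined, since a morphism of $\R$ factoring through $\mathcal P$ need not act as zero on $\Hom_\B(-,W)$; only $\EE(-,B)|_\R$ descends to $\underline\R$ (because $\EE(\mathcal P,B)=0$). To see that $\EE(-,B)|_{\underline\R}$ is finitely presented you should instead precompose the epimorphism $\Hom_\B(-,\Sigma B)|_\R\to\EE(-,B)|_\R$ with a right $\R$-approximation $R_{\Sigma B}\to\Sigma B$, observe that the resulting surjection $\Hom_{\underline\R}(-,R_{\Sigma B})\to\EE(-,B)|_{\underline\R}$ does descend, and then compute its kernel as the image of $\Hom_\B(-,E)|_\R$ for the cocone $E$ of the pulled-back extension, which a further right $\R$-approximation of $E$ covers by a representable; with that correction the argument goes through.
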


\begin{proof}
(a) Since $\R$ is a contravariantly finite rigid subcategory of $\B$, then $(\R, \R^{\bot_1})$ is a cotorsion pair in $\B$ (see \cite[Definition 4.1]{NP} and \cite[Lemma 2.6]{LZ0}). According to \cite[Theorem 3.2]{LN}, $\overline \h$ is abelian.

The equivalence $\overline \h\simeq \mod\underline {\Omega \R}$ is proved in \cite[Theorem 1.2]{LZ}.

The equivalence $\overline \h\simeq \mod\underline {\R}$ is proved in \cite[Theorem 3.4]{ZZ2}.

Hence we have $\overline \h\simeq \mod\underline {\Omega \R}\simeq \mod\underline \R$.
\smallskip

(b) Note that a morphism $f$ in $\Omega \R$ factors through $\R$ if and only if it factors through $\mathcal P$. By \cite[Theorem 4.10]{LN}, we get that the subcategory $\underline {\Omega \R}$ of $\overline \h$ is the enough projectives.

(c) This is followed by \cite[Theorem 3.5, Corollary 3.8]{LN}.

\end{proof}

The following lemma is very useful.

\begin{lem}\label{useful}
Let $A\to B\to R\dashrightarrow$ be an $\EE$-triangle where $A\in \h$ and $R\in \R$. Then $B\in \h$.
\end{lem}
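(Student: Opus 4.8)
The plan is to realize $B$ as a cocone of two objects of $\R$ by pushing out the given $\EE$-triangle along an inflation witnessing $A \in \h$. Since $A \in \h = \CoCone(\R,\R)$, I would first fix an $\EE$-triangle $A \xrightarrow{a} R_1 \xrightarrow{b} R_2 \dashrightarrow$ with $R_1, R_2 \in \R$. The inflation $a$ is the morphism along which I push out the given $\EE$-triangle $A \xrightarrow{u} B \xrightarrow{v} R \dashrightarrow$.

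Next I would form the pushout of $u$ and $a$. Applying the pushout construction of Remark~\ref{useful}(c) (together with the octahedral-type axiom of \cite{NP}) to the $\EE$-triangle $A \xrightarrow{u} B \xrightarrow{v} R \dashrightarrow$ and the morphism $a \colon A \to R_1$ produces a commutative diagram with $\EE$-triangle rows and columns whose second row reads $R_1 \xrightarrow{d} E \xrightarrow{e} R \dashrightarrow$ and whose middle column reads $B \xrightarrow{g} E \to R_2 \dashrightarrow$. The point is that pushing out preserves cokernels, so the cofiber of $g$ is again $R_2$, the cofiber of $a$.

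Two easy observations then finish the argument. Because $R_1, R \in \R$ and $\R$ is rigid, $\EE(R, R_1) = 0$, so the second-row $\EE$-triangle $R_1 \to E \to R \dashrightarrow$ splits and $E \cong R_1 \oplus R \in \R$. The middle column $B \xrightarrow{g} E \to R_2 \dashrightarrow$ is then an $\EE$-triangle with both $E$ and $R_2$ in $\R$, which is exactly the assertion $B \in \CoCone(\R, \R) = \h$.

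I expect the only real work to be the construction of the pushout diagram, that is, producing the middle column $B \to E \to R_2 \dashrightarrow$ rather than merely the split row; this is where the octahedral axiom (ET4) is used, and everything after it—rigidity splitting the row and reading off membership in $\CoCone(\R,\R)$—is formal. As an alternative to invoking the full $3 \times 3$ diagram, one could start from the $\EE$-triangle $A \xrightarrow{\binom{a}{u}} R_1 \oplus B \to E \dashrightarrow$ supplied directly by Remark~\ref{useful}(c) and extract the column from it, but the pushout diagram is the cleaner route.
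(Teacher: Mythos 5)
Your proposal is correct and follows essentially the same route as the paper: the paper forms exactly this pushout diagram with rows $A\to B\to R$ and $R_1\to R_1\oplus R\to R$ and columns ending in $R_2$, the splitting of the second row being immediate from $\EE(R,R_1)=0$. The only cosmetic difference is that the paper writes the middle object as $R_1\oplus R$ from the start, whereas you name it $E$ and then split it.
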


\begin{proof}
Since $A\in \h$, it admits an $\EE$-triangle $A\to R_1\to R_2\dashrightarrow$ where $R_1,R_2\in \R$. Then we have the following commutative diagram
$$\xymatrix{
A \ar[r] \ar[d] &B \ar[r] \ar[d] &R\ar@{=}[d] \ar@{-->}[r]&\\
R_1 \ar[r] \ar[d] &R_1\oplus R \ar[r] \ar[d] &R \ar@{-->}[r]&\\
R_2 \ar@{-->}[d] \ar@{=}[r] &R_2 \ar@{-->}[d]\\
&&
}
$$
Hence by definition $B\in \h$.
\end{proof}

\section{Relative rigid subcategories and $\tau$-rigid subcategories}

\begin{defn}\label{d1}
For two objects $M,N\in\B$, denote by $\overline {[\R]}(M,\Sigma N)$ the subset of $[\R](M,\Sigma N)$ such that $\alpha\in \overline {[\R]}(M,\Sigma N)$ if we have the following commutative diagram:
$$\xymatrix@C=0.8cm@R0.6cm{
&M\ar[r]^h \ar@{.>}[d] &R_0 \ar[d]^j\\
N \ar[r] &I_N \ar[r]^i &\Sigma N \ar@{-->}[r] &
}
$$
where $jh=\alpha$, $I_N\in \mathcal I$ and $R_0\in \R$.

A subcategory $\X$ of $\B$ is called relative rigid (with respect to $\R$) if $\overline {[\R]}(\X,\Sigma\X)={[\R]}(\X,\Sigma \X)$. An object $X$ is called relative rigid if $\add X$ is relative rigid.
\end{defn}

For convenience, a relative rigid subcategory in this article is also called $\R$-rigid.

\begin{lem}
Any rigid subcategory is $\R$-rigid.
\end{lem}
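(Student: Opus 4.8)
The plan is to establish the only nontrivial inclusion, namely $[\R](\X,\Sigma\X)\subseteq\overline{[\R]}(\X,\Sigma\X)$; the reverse inclusion is built into the definition, so equality---and hence $\R$-rigidity of $\X$---follows at once. First I would fix objects $X,X'\in\X$ together with a morphism $\alpha\in[\R](X,\Sigma X')$ and unwind exactly what is needed to place $\alpha$ in $\overline{[\R]}(X,\Sigma X')$. A factorization $\alpha=jh$ through an object $R_0\in\R$ is already available, since $\alpha\in[\R](X,\Sigma X')$. Reading the defining diagram of Definition \ref{d1}, the remaining requirement is a morphism $g\colon X\to I_{X'}$ with $ig=jh=\alpha$, where $X'\to I_{X'}\xrightarrow{i}\Sigma X'\overset{\delta}{\dashrightarrow}$ is the $\EE$-triangle realizing $\Sigma X'$ with $I_{X'}\in\I$. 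Thus the whole statement reduces to the assertion that every morphism $\alpha\colon X\to\Sigma X'$ factors through the deflation $i$.

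To prove this factorization I would apply $\Hom(X,-)$ to the $\EE$-triangle $X'\to I_{X'}\xrightarrow{i}\Sigma X'\overset{\delta}{\dashrightarrow}$ and use the associated long exact sequence from \cite{NP}, which gives exactness of
$$\Hom(X,I_{X'})\xrightarrow{i_*}\Hom(X,\Sigma X')\xrightarrow{\delta_\sharp}\EE(X,X').$$
Here $\delta_\sharp$ sends a morphism to the pullback of $\delta$ along it, so $\alpha$ lies in the image of $i_*$---that is, factors through $i$---precisely when $\delta_\sharp(\alpha)=0$. Since $X,X'\in\X$ and $\X$ is rigid, we have $\EE(X,X')=0$, whence $\delta_\sharp(\alpha)=0$ automatically and the desired lift $g$ exists.

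Combining the two factorizations, $\alpha=jh$ through $R_0\in\R$ and $\alpha=ig$ through $i$, yields exactly the commutative diagram of Definition \ref{d1} with $jh=\alpha$, so $\alpha\in\overline{[\R]}(X,\Sigma X')$. Letting $X,X',\alpha$ vary gives the inclusion $[\R](\X,\Sigma\X)\subseteq\overline{[\R]}(\X,\Sigma\X)$ and completes the argument. I do not expect a genuine obstacle: the only subtle point is reading the diagram correctly to see that membership in $\overline{[\R]}$ adds, beyond the given factorization through $\R$, exactly a lift along $i$, after which rigidity forces the obstruction group $\EE(X,X')$ to vanish. One mild bookkeeping point is that $\Sigma X'$ depends on a chosen injective $\EE$-triangle, but the vanishing of $\EE(X,X')$ makes the lift available for any such choice, so the conclusion is independent of it.
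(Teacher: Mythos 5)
Your proposal is correct and follows essentially the same route as the paper: both apply $\Hom_{\B}(X,-)$ to the injective $\EE$-triangle $X'\to I_{X'}\xrightarrow{i}\Sigma X'\dashrightarrow$ and use the vanishing of $\EE(X,X')$ (rigidity) in the resulting exact sequence to lift $\alpha=jh$ through $i$. The only difference is expository: you make explicit that the lift works for every factorization of $\alpha$ through $\R$ and for any choice of injective $\EE$-triangle, which the paper leaves implicit.
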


\proof Let $\X$ be a rigid subcategory. For any $X,X'\in\X$ and $\alpha \in [\R](X,X')$, we have the following diagram:
$$\xymatrix@C=0.8cm@R0.6cm{
&X\ar[r]^h  &R_0 \ar[d]^j\\
X'\ar[r]^p &I_{X'} \ar[r]^{i\;\;} &\Sigma X' \ar@{-->}[r] &
}
$$
where $jh=\alpha$, $R_0\in \R$ and $I_{X'}\in \mathcal I$.
Applying the functor $\Hom_{\B}(X,-)$ to the $\EE$-triangle
$$\xymatrix{X'\ar[r]^p&I_{X'} \ar[r]^{i\;\;} &\Sigma X' \ar@{-->}[r] &},$$
we get the following exact sequence:
$$\Hom_{\B}(X,I_{X'})\xrightarrow{\Hom_{\B}(X,~i)}\Hom_{\B}(X,\Sigma X')\xrightarrow{~~}\EE(X,X')=0.$$
Then there exists a morphism $g\colon X\to I_{X'}$ such that $ig=jh$.
This shows that $\X$ is $R$-rigid.  \qed

\begin{prop}\label{sum}
Let $\X$ be $\R$-rigid and $\R_0\subseteq \R$. Then $\add(\X\cup \R_0)$ is $\R$-rigid if and only if $\EE(\R_0,\X)=0$.
\end{prop}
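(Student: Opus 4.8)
The plan is to reformulate the defining condition of $\overline{[\R]}$ through the connecting homomorphism of the $\EE$-triangle defining $\Sigma$, and then to reduce the $\R$-rigidity of $\add(\X\cup\R_0)$ to a few elementary Hom-spaces by additivity. Fix $B\in\B$ with its defining $\EE$-triangle $B\to I_B\xrightarrow{i}\Sigma B\dashrightarrow$, $I_B\in\I$. Applying $\Hom_\B(A,-)$ gives an exact sequence
$$\Hom_\B(A,I_B)\xrightarrow{i_*}\Hom_\B(A,\Sigma B)\xrightarrow{\partial}\EE(A,B)\to\EE(A,I_B)=0,$$
so $\partial$ is surjective and $\Ker\partial=\Im i_*$ is exactly the set of morphisms $A\to\Sigma B$ factoring through $i$. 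Comparing with Definition \ref{d1}, one sees that $\alpha\in\overline{[\R]}(A,\Sigma B)$ if and only if $\alpha\in[\R](A,\Sigma B)$ and $\partial(\alpha)=0$; hence $\overline{[\R]}(A,\Sigma B)=[\R](A,\Sigma B)$ precisely when $\partial$ annihilates $[\R](A,\Sigma B)$. Moreover $\partial$ is natural in the first variable, so a factorization $A\xrightarrow{h}R'\xrightarrow{j}\Sigma B$ with $R'\in\R$ satisfies $\partial(jh)=h^*\partial(j)$ with $\partial(j)\in\EE(R',B)$.

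Next I would use that $\B$ is Krull--Schmidt: since $\X$ and $\R_0$ are closed under direct summands and $\R_0\subseteq\R$ (replacing $\R_0$ by $\add\R_0\subseteq\R$ if necessary), every object of $\Y=\add(\X\cup\R_0)$ splits as $X'\oplus R'$ with $X'\in\X$ and $R'\in\R_0$. As $[\R](-,\Sigma-)$ and $\EE$, and hence also $\overline{[\R]}(-,\Sigma-)=[\R](-,\Sigma-)\cap\Ker\partial$, are additive in both variables, the identity $\overline{[\R]}(\Y,\Sigma\Y)=[\R](\Y,\Sigma\Y)$ is equivalent to its validity for all $A,B\in\X\cup\R_0$, i.e. in the four cases (i) $A,B\in\X$; (ii) $A\in\X,B\in\R_0$; (iii) $A\in\R_0,B\in\X$; (iv) $A,B\in\R_0$. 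Case (i) is the standing hypothesis that $\X$ is $\R$-rigid. In (ii) and (iv) we have $B\in\R_0\subseteq\R$, so each morphism of $[\R](A,\Sigma B)$ factors as $A\xrightarrow{h}R'\xrightarrow{j}\Sigma B$ with $R'\in\R$; then $\partial(jh)=h^*\partial(j)$ with $\partial(j)\in\EE(R',B)=0$ by rigidity of $\R$, so these cases hold automatically.

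Everything therefore reduces to case (iii), which is exactly where the condition $\EE(\R_0,\X)=0$ enters. For $R\in\R_0$ and $X\in\X$, every morphism $R\to\Sigma X$ factors through $R\in\R$ via the identity, so $[\R](R,\Sigma X)=\Hom_\B(R,\Sigma X)$; by surjectivity of $\partial$ this gives $\partial\big([\R](R,\Sigma X)\big)=\EE(R,X)$. Hence $\overline{[\R]}(\R_0,\Sigma\X)=[\R](\R_0,\Sigma\X)$ holds if and only if $\EE(\R_0,\X)=0$, and assembling the four cases yields the proposition. (For the ``only if'' direction one may argue contrapositively: a nonzero $\delta\in\EE(R,X)$ equals $\partial(j)$ for some $j\colon R\to\Sigma X$, and this $j$ lies in $[\R](\Y,\Sigma\Y)$ but not in $\overline{[\R]}(\Y,\Sigma\Y)$.)

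I expect the delicate points to be, first, assembling the correct exactness and naturality of the connecting map $\partial$ in the extriangulated setting (surjectivity from $\EE(A,I_B)=0$, kernel equal to the morphisms factoring through $i$, and naturality in the first variable), all available from the long exact sequence of \cite{NP}; and second, the additivity bookkeeping, via Krull--Schmidt, that licenses the reduction to the four building-block cases. The conceptual heart is light by comparison: for $R\in\R$ the whole space $\Hom_\B(R,\Sigma X)$ factors through $\R$ and surjects under $\partial$ onto $\EE(R,X)$, so relative rigidity of the morphisms from $\R_0$ into $\Sigma\X$ is literally the vanishing $\EE(\R_0,\X)=0$, while all other morphism types are controlled either by the rigidity of $\R$ or by the hypothesis on $\X$.
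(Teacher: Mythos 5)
Your proof is correct and follows essentially the same route as the paper's: the paper likewise observes that every morphism $R_0\to\Sigma X_0$ automatically factors through $\R$, so relative rigidity for these morphisms is exactly the vanishing of $\EE(\R_0,\X)$ under the surjective connecting map, while the morphisms $\X\to\Sigma\R_0$ and $\R_0\to\Sigma\R_0$ are disposed of by the rigidity of $\R$. Your explicit reformulation $\overline{[\R]}(A,\Sigma B)=[\R](A,\Sigma B)\cap\Ker\partial$ and the four-case additivity reduction merely make precise the bookkeeping that the paper's terser argument leaves implicit.
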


\begin{proof}
Let $X_0\in \X$ and $R_0\in \R$ and $y\in \Hom_{\B}(R_0, \Sigma X_0)$ be any morphism. We have the following diagram
$$\xymatrix{
&&R_0 \ar[d]^y\\
X_0 \ar[r]^{j_0} &I \ar[r]^{i_0} &\Sigma X_0  \ar@{-->}[r] &
}
$$
If $\add(\X\cup \R_0)$ is $\R$-rigid, by the definition of $\R$-rigid, $y$ factors through $i_0$, which implies that $\EE(\R_0,\X)=0$.\\
If $\EE(\R_0,\X)=0$, then from the diagram above, we have $[\R](\R_0,\Sigma \X)=[\overline \R](\R_0,\Sigma \X)$. It is enough to check that $[\R](\X, \Sigma \R_0)=[\overline \R](\X, \Sigma \R_0).$ But this is followed by the fact that $\R$ is rigid.
\end{proof}

We give the following definition, which is a special case of \cite[Definition 1.3 (ii)]{IJY}.

\begin{defn}
A subcategory $\overline \X\subseteq \overline \h$ is called $\tau$-rigid if for any object $X\in \overline \X$, there is an exact sequence $\Omega R_0'\xrightarrow{\overline{f'}} \Omega R_1'\to X\to 0$ where $\Hom_{\overline \h}(\overline {f'}, X')$ is surjective for any $X'\in \overline \X$.
\end{defn}

Let $\C$ and $\D$ be subcategories of $\B$. We denote
$$\C_\D=\{C\in \C \text{ }|\text{ }C \text{ has no non-zero direct summands in }\D \}.$$

For any object $X\in\h_\R$, there exists an $\EE$-triangle
$$X\xrightarrow{~h~}R_0\xrightarrow{~u~}R_1\dashrightarrow,$$
where $R_0,R_1\in\R$. Since $\B$ has enough projectives, there exists an $\EE$-triangle
$$\Omega R_0\xrightarrow{~h~}P\xrightarrow{~u~}R_0\dashrightarrow,$$
where $P\in\mathcal P$. By (ET4)$^{\rm op}$, we have the following commutative diagram $(\maltese)$
$$\xymatrix{
\Omega R_0 \ar@{=}[r] \ar[d]^f  &\Omega R_0 \ar[d]^{p_1}\\
\Omega R_1 \ar[r]^b \ar[d]^g &P \ar[r] \ar[d]^{q_1} &R_1 \ar@{=}[d] \ar@{-->}[r] &\\
X \ar[r]^h \ar@{-->}[d] &R_0 \ar[r] \ar@{-->}[d] &R_1 \ar@{-->}[r] &\\
& & &
}
$$
of $\EE$-triangles. Then we get the following exact sequence in $\overline \h$
$$ \Omega R_0\xrightarrow{~\overline f~}\Omega R_1\xrightarrow{~\overline g~}X\rightarrow 0. $$

\begin{lem}\label{suj}
Let $X\in \h_\R$. In the diagram $(\maltese)$, $\Hom_{\B}(f, R)$ is surjective for any $R\in \R$. Moreover, if $\Hom_{\overline \h}(\overline f, X')$ is surjective for an object $X'\in \h$, then $\Hom_{\B}(f, X')$ is also surjective.
\end{lem}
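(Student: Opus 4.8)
The plan is to prove the two assertions in turn, using the first to power the second. For the first claim, I would read off from the middle column of $(\maltese)$ the defining $\EE$-triangle $\Omega R_0\xrightarrow{p_1}P\xrightarrow{q_1}R_0\dashrightarrow$ with $P\in\mathcal P$, fix an arbitrary $R\in\R$, and apply the contravariant functor $\Hom_\B(-,R)$. The long exact sequence attached to an $\EE$-triangle yields
$$\Hom_\B(P,R)\xrightarrow{\Hom_\B(p_1,R)}\Hom_\B(\Omega R_0,R)\longrightarrow\EE(R_0,R).$$
Since $R_0,R\in\R$ and $\R$ is rigid, $\EE(R_0,R)=0$, so $\Hom_\B(p_1,R)$ is surjective; i.e. every $\phi\colon\Omega R_0\to R$ factors as $\phi=\phi'p_1$ for some $\phi'\colon P\to R$. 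I would then exploit the commutativity of the top-left square of $(\maltese)$, which gives $p_1=bf$. Combining the two facts, $\phi=\phi'p_1=(\phi'b)f$ factors through $f$, which is exactly the statement that $\Hom_\B(f,R)$ is surjective.

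For the \emph{moreover} part, the idea is to use the first assertion to absorb the ambiguity created by passing to the quotient $\overline{\h}=\h/[\R]$. Fix $X'\in\h$ and take an arbitrary $\phi\colon\Omega R_0\to X'$. Surjectivity of $\Hom_{\overline\h}(\overline f,X')$ produces $\psi\colon\Omega R_1\to X'$ with $\overline{\psi f}=\overline\phi$, meaning $\psi f-\phi\in[\R](\Omega R_0,X')$; so I may write $\psi f-\phi=\beta\alpha$ with $\alpha\colon\Omega R_0\to R'$, $\beta\colon R'\to X'$ and $R'\in\R$. Applying the first assertion to $R'$, I lift $\alpha$ along $f$ to obtain $\alpha'\colon\Omega R_1\to R'$ with $\alpha=\alpha'f$, whence $\beta\alpha=(\beta\alpha')f$ also factors through $f$. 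Therefore $\phi=\psi f-\beta\alpha=(\psi-\beta\alpha')f$ factors through $f$ in $\B$, proving that $\Hom_\B(f,X')$ is surjective.

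The calculations are light; the only genuine content is recognizing which $\EE$-triangle to feed into the rigidity of $\R$ and spotting the factorization $p_1=bf$ inside $(\maltese)$. The step I expect to need the most care is the \emph{moreover} part: one must notice that the correction term $\beta\alpha$ lying in $[\R](\Omega R_0,X')$ can itself be lifted along $f$ \emph{precisely because} of the first assertion. This is the point where the two halves of the lemma interlock, and missing it would leave the argument valid only in the quotient $\overline\h$ rather than in $\B$ itself.
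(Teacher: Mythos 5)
Your proof is correct and follows essentially the same route as the paper: the first claim is obtained by factoring any $\Omega R_0\to R$ through $p_1$ via the vanishing of $\EE(R_0,R)$ and then using $p_1=bf$, and the second claim reduces the correction term in $[\R](\Omega R_0,X')$ to the first claim exactly as the paper does. No gaps.
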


\begin{proof}
Let $r:\Omega R_0\to R$ be any morphism. Since $\EE(R_0,R)=0$, $r$ factors through $p_1$, hence it also factors through $f$, which implies that $\Hom_{\B}(f, R)$ is surjective. If $\Hom_{\overline \h}(\overline f, X')$ is surjective where $X'\in \h$, let $x_0:\Omega R_0\to X'$ be any morphism. Then there is a morphism $x_1:\Omega R_1\to X'$ such that $\overline x_0=\overline {x_1f}$. Hence $x_0-x_1f$ factors through $\R$, and by the preceding argument, we get that $x_0-x_1f$ factors through $f$, hence $\Hom_{\B}(f, X')$ is also surjective.
\end{proof}

\begin{lem}
Let $X\in \h_\R$. In the diagram $(\maltese)$, $g$ is a right $(\Omega \R)$-approximation.
\end{lem}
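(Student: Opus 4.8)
The plan is to verify the defining property of a right $(\Omega\R)$-approximation for $g\colon\Omega R_1\to X$ directly. First I would record that $g$ is a legitimate candidate: from the middle row of $(\maltese)$, $\Omega R_1$ sits in an $\EE$-triangle $\Omega R_1\xrightarrow{b}P\to R_1\dashrightarrow$ with $P\in\mathcal P$ and $R_1\in\R$, so $\Omega R_1\in\Omega\R$. It then remains to show that every morphism $t\colon\Omega R'\to X$ with $\Omega R'\in\Omega\R$ factors through $g$. The guiding idea is that $(\maltese)$ exhibits $\Omega R_1$ as the homotopy pullback of $h\colon X\to R_0$ and $q_1\colon P\to R_0$: applying the pullback dual of Remark \ref{useful}(c) to the morphism $h$ and the $\EE$-triangle $\Omega R_0\xrightarrow{p_1}P\xrightarrow{q_1}R_0\dashrightarrow$ yields an $\EE$-triangle
$$\Omega R_1\xrightarrow{\binom{b}{g}}P\oplus X\xrightarrow{(q_1\ \ -h)}R_0\dashrightarrow,$$
whose composite vanishes precisely because $q_1 b=hg$ is the commuting square in $(\maltese)$. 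Through this triangle, factoring $t$ through $g$ reduces to producing a morphism $s\colon\Omega R'\to P$ with $q_1 s=ht$, i.e. to lifting $ht$ along the deflation $q_1$.

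To produce such an $s$, I would write the defining $\EE$-triangle $\Omega R'\xrightarrow{a'}P'\xrightarrow{c'}R'\dashrightarrow$ of $\Omega R'$, with $P'\in\mathcal P$ and $R'\in\R$. Applying $\Hom_\B(-,R_0)$ and using that $\R$ is rigid, so $\EE(R',R_0)=0$, shows that $(a')^{*}\colon\Hom_\B(P',R_0)\to\Hom_\B(\Omega R',R_0)$ is surjective; hence $ht=\phi a'$ for some $\phi\colon P'\to R_0$. Since $P'$ is projective and $q_1$ is a deflation, $\phi$ lifts to $\psi\colon P'\to P$ with $q_1\psi=\phi$, and then $s:=\psi a'$ satisfies $q_1 s=\phi a'=ht$, as required.

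Finally, the morphism $\binom{s}{t}\colon\Omega R'\to P\oplus X$ is killed by $(q_1\ \ -h)$, so applying $\Hom_\B(\Omega R',-)$ to the pullback $\EE$-triangle above and using exactness at the middle $\Hom$-term (the weak-kernel property valid in any extriangulated category) produces $w\colon\Omega R'\to\Omega R_1$ with $\binom{b}{g}w=\binom{s}{t}$; in particular $gw=t$, so $t$ factors through $g$. The only genuinely delicate point is this first reduction: recognizing $\Omega R_1$ as the homotopy pullback of $h$ and $q_1$ and extracting the correct $\EE$-triangle $\Omega R_1\to P\oplus X\to R_0$ from $(\maltese)$ with the right signs. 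Once that is in place, the construction of $s$ is a routine two-step lift using only rigidity of $\R$ and projectivity of $P'$, and the conclusion follows from the standard long exact sequence.
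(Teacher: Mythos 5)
Your proof is correct and follows essentially the same route as the paper: reduce to lifting $ht$ along the deflation $q_1$ (which works because $\EE(R',R_0)=0$ forces $ht$ to factor through the projective $P'$, whence through $q_1$), and then use the weak-pullback $\EE$-triangle $\Omega R_1\to P\oplus X\to R_0$ coming from the $(\maltese)$ diagram to produce the factorization through $g$. The paper's version is just terser, leaving the homotopy-pullback step implicit.
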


\begin{proof}
Let $\Omega R'\in \Omega \R$ and $r':\Omega R'\to X$ be any morphism. Since $hr'$ factors through $\mathcal P$, it also factors through $q_1$. Hence there is a morphism $r'':\Omega R'\to \Omega R_1$ such that $r'=gr''$, which implies that $g$ is right $(\Omega \R)$-approximation.
\end{proof}

\begin{lem}\label{1}
Let $\overline \X\subseteq \overline \h$ be a $\tau$-rigid subcategory. Then for any object $X\in \X_\R$, in the diagram $(\maltese)$, we have that $\Hom_{\B}(f, X_0)$ is surjective for an object $X_0\in \X$.
\end{lem}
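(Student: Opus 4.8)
The plan is to reduce the statement to a surjectivity assertion inside the abelian quotient $\overline{\h}$ and then transfer that surjectivity from the presentation furnished by $\tau$-rigidity to the one produced by the diagram $(\maltese)$. Since $X_0\in\X\subseteq\h$, the second part of Lemma \ref{suj} says it suffices to prove that $\Hom_{\overline{\h}}(\overline f,X_0)$ is surjective; the map $f$ in $\B$ then automatically has $\Hom_{\B}(f,X_0)$ surjective. So I would work entirely in $\overline{\h}$, where by Proposition \ref{b1} the objects of $\underline{\Omega\R}$ are exactly the projectives and $\Omega R_0\xrightarrow{\overline f}\Omega R_1\xrightarrow{\overline g}X\to 0$ (the left-hand column of $(\maltese)$, read in $\overline{\h}$) is a projective presentation of $X$.

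Next I would invoke the hypothesis. As $X\in\X_\R$ we have $X\in\overline{\X}$, so $\tau$-rigidity yields a second projective presentation $\Omega R_0'\xrightarrow{\overline{f'}}\Omega R_1'\xrightarrow{\overline{g'}}X\to 0$ for which $\Hom_{\overline{\h}}(\overline{f'},X_0)$ is surjective for every $X_0\in\overline{\X}$, in particular for the one we are handed. The heart of the argument compares these two presentations of the same $X$. Using that $\Omega R_0,\Omega R_1,\Omega R_0',\Omega R_1'$ are projective and that $\overline g,\overline{g'}$ are epimorphisms, I would lift $\id_X$ to chain maps between the two-term complexes $\Omega R_0\xrightarrow{\overline f}\Omega R_1$ and $\Omega R_0'\xrightarrow{\overline{f'}}\Omega R_1'$, namely $s=(s_1,s_0)$ and $t=(t_1,t_0)$ with $\overline{f'}s_1=s_0\overline f$, $\overline f t_1=t_0\overline{f'}$, $\overline{g'}s_0=\overline g$, $\overline g t_0=\overline{g'}$. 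The degree-zero relation gives a candidate homotopy $h\colon\Omega R_1\to\Omega R_0$ with $t_0 s_0=\id_{\Omega R_1}+\overline f\,h$; what the computation below actually needs is the companion degree-one relation $t_1 s_1=\id_{\Omega R_0}+h\,\overline f$.

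Granting that relation, the conclusion is immediate. Given an arbitrary $\overline{\phi}\colon\Omega R_0\to X_0$, the map $\overline{\phi}\,t_1\colon\Omega R_0'\to X_0$ factors through $\overline{f'}$ by the surjectivity from $\tau$-rigidity, say $\overline{\phi}\,t_1=\overline{\psi'}\,\overline{f'}$; then
$$\overline{\phi}=\overline{\phi}\,t_1 s_1-\overline{\phi}\,h\,\overline f=\overline{\psi'}\,\overline{f'}s_1-\overline{\phi}\,h\,\overline f=\bigl(\overline{\psi'}\,s_0-\overline{\phi}\,h\bigr)\,\overline f,$$
so $\overline{\phi}$ factors through $\overline f$ and $\Hom_{\overline{\h}}(\overline f,X_0)$ is surjective, as required.

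The step I expect to be the main obstacle is exactly the justification of the degree-one relation $t_1 s_1=\id_{\Omega R_0}+h\,\overline f$, i.e.\ that $t\circ s$ is genuinely homotopic to the identity. For honest projective resolutions two lifts of $\id_X$ are always homotopic, but for two-term presentations this can fail: the property ``$\Hom_{\overline{\h}}(\overline f,-)$ surjective'' is sensitive to superfluous projective summands in the first syzygy, and a presentation carrying such a summand can violate the conclusion even when another presentation of the same object does not. I would therefore use the specific features of the construction — that $\overline g$ is a right $(\Omega\R)$-approximation and that the left column of $(\maltese)$ is an honest $\EE$-triangle $\Omega R_0\xrightarrow{f}\Omega R_1\xrightarrow{g}X\dashrightarrow$ in $\B$ — to rule out such a superfluous summand, which is precisely what makes the degree-one relation hold. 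This is where the geometry of the diagram, rather than formal homological nonsense, does the work.
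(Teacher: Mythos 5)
Your reduction to surjectivity of $\Hom_{\overline\h}(\overline f,X_0)$ via Lemma \ref{suj} is correct, and you have accurately located the crux: for two-term projective presentations, two lifts of $\id_X$ need not be homotopic, so the degree-one relation $t_1s_1=\id_{\Omega R_0}+h\,\overline f$ does not follow from formal homological algebra. But that relation \emph{is} the content of the lemma, and your proposal stops at asserting that ``the geometry of the diagram'' will produce it; no argument is given, and I do not see how to produce one while staying inside $\overline\h$. In the quotient, the right-$(\Omega\R)$-approximation property of $\overline g$ is automatic for any epimorphism from a projective, and the presentation coming from $(\maltese)$ carries no visible intrinsic feature ruling out a superfluous degree-one summand; the distinguishing feature lives in $\B$, not in $\overline\h$. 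So there is a genuine gap at exactly the step you flagged.

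The paper closes this gap by not leaving $\B$. It first upgrades the $\tau$-rigidity presentation to an honest $\EE$-triangle $\Omega R_0'\xrightarrow{\ \gamma\ }\Omega R_1'\oplus P'\to X'$ in which $X$ is a direct summand of $X'$ (this is where $X\in\X_\R$ is used), and checks that $\gamma$ remains left-approximating in the relevant sense. Composing the two morphisms of $\EE$-triangles from the $(\maltese)$ triangle to the $\gamma$-triangle and back yields a morphism $(y'y,\ast,x'x)$ of the $(\maltese)$ triangle to itself with $x'x=1_X$; subtracting it from the identity morphism gives a morphism of $\EE$-triangles $(1-y'y,\ast,0)$, and the basic property of such morphisms (Remark 2.2(a): the third component factors through $g$ if and only if the first component factors through $f$) produces $z$ with $1_{\Omega R_0}=y'y+zf$ in $\B$. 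That identity is precisely your degree-one relation (with the $\gamma$-presentation in place of the raw $f'$-presentation), and from it your final computation goes through verbatim. So your skeleton matches the paper's, but the missing step requires the extriangulated structure of $\B$ rather than an argument internal to the abelian quotient, and as written the proof is incomplete.
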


\begin{proof}
Since $X\in \X_\R$ where $\overline \X$ is $\tau$-rigid, $X$ admits an exact sequence $\Omega R_0'\xrightarrow{\overline f'} \Omega R_1'\to X\to 0$ where $\Hom_{\overline \h}(\overline{f'}, X')$ is surjective for any $X'\in \overline \X$. $f'$ admits the following commutative diagram
$$\xymatrix{
\Omega R_0' \ar[r]^{p_0} \ar[d]_{f'} &P' \ar[r] \ar[d] & R_0' \ar@{=}[d] \ar@{-->}[r] &\\
\Omega R_1' \ar[r] &X' \ar[r] &R_0' \ar@{-->}[r] &
}
$$
where $P'\in \mathcal P$ and $X'\in \h$. We have $X'\simeq X$ in $\overline \h$. Since $X$ has no a direct summand in $\R$, we have that $X$ is a direct summand of $X'$. We have the following commutative diagram
$$\xymatrix{
\Omega R_0' \ar@{=}[r] \ar[d]_-{\svecv{f'}{p_0}}  &\Omega R_0'\ar[d]^-{\svecv{p_2f'}{p_0}} \\
\Omega R_1'\oplus P' \ar[d]_{\alpha} \ar[r]^-{\left(\begin{smallmatrix}
p_2&0\\
0&1
\end{smallmatrix}\right)} &P_2\oplus P' \ar[r]_-{\svech{q_2}{0}} \ar[d] &R_1' \ar@{=}[d] \ar@{-->}[r]&\\
X' \ar[r]_{h'} \ar@{-->}[d] &R_0'' \ar[r] \ar@{-->}[d] &R_1' \ar@{-->}[r] &\\
& &
}
$$
where $P_2\in \mathcal P$, $R_0'',R_1'\in \R$. By the similar argument as in the preceding lemma, we get that $\alpha$ is a right $(\Omega \R)$-approximation. Now we have the following commutative diagram
$$\xymatrix{
\Omega R_0 \ar[r]^{f} \ar[d]^{y} &\Omega R_1 \ar[r]^g \ar[d]^{\eta} &X \ar[d]^{x} \ar@{-->}[r] &\\
\Omega R_0' \ar[r]^-{\svecv{f'}{p_0}=\gamma} \ar[d]^{y'} &\Omega R_1'\oplus P' \ar[r]^-{\alpha} \ar[d] &X' \ar[d]^{x'} \ar@{-->}[r] &\\
\Omega R_0 \ar[r]^{f}  &\Omega R_1 \ar[r]^g  &X  \ar@{-->}[r] &\\
}
$$
where $x'x=1_X$. Then there is a morphism $z:\Omega R_1\to \Omega R_0$ such that $y'y+zf=1$. Let $X_0\in \X$ and $a:\Omega R_0\to X_0$ be any morphism. Then we have a morphism $ay':\Omega R_0'\to X_0$. Since $\Hom_{\overline \h}(\overline{f'}, X_0)$ is surjective, by the similar argument as in Lemma \ref{suj}, there is a morphism $\beta:\Omega R_1'\oplus P'\to X_0$ such that $ay'=\beta \gamma$. Thus $a=a(y'y+zf)=\beta \gamma y+azf=(\beta \eta +az)f$, which implies that $\Hom_{\B}(f,X_0)$ is surjective.
\end{proof}

We show the main theorem of this section.

\begin{thm}\label{main1}
Let $\X$ be a subcategory of $\h$.  Then $\X$ is $\R$-rigid if and only if $\overline \X$ is
a $\tau$-rigid subcategory of $\overline \h$.
 \end{thm}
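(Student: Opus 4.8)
The plan is to reduce the biconditional to a single pairwise criterion: for a fixed $X\in\X_\R$ with associated morphism $f\colon\Omega R_0\to\Omega R_1$ from the diagram $(\maltese)$, and for an arbitrary $X'\in\X$, I claim that the pair $(X,X')$ is relative rigid if and only if $\Hom_{\B}(f,X')$ is surjective. Objects of $\X$ lying in $\R$ vanish in $\overline\h$ and therefore play no role in $\tau$-rigidity, while their contribution to $\R$-rigidity is dictated by the rigidity of $\R$ exactly as in Proposition \ref{sum}; so the content is entirely carried by this criterion for $X\in\X_\R$. Once it is available, both implications are short: if $\X$ is $\R$-rigid then the presentation $\Omega R_0\xrightarrow{\overline f}\Omega R_1\xrightarrow{\overline g}X\to 0$ coming from $(\maltese)$ witnesses $\tau$-rigidity, and conversely Lemma \ref{1} feeds the surjectivity of $\Hom_{\B}(f,X_0)$ back into the criterion.

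First I would make $\R$-rigidity computable. Applying $\Hom_{\B}(-,R)$ to the defining $\EE$-triangle $X\xrightarrow{h}R_0\to R_1\dashrightarrow$ and using $\EE(R_1,R)=0$ for $R\in\R$ shows that $h$ is a left $\R$-approximation; hence every morphism factoring through $\R$ factors through $h$, so $[\R](X,\Sigma X')=\operatorname{Im}\big(h^{*}\colon\Hom_{\B}(R_0,\Sigma X')\to\Hom_{\B}(X,\Sigma X')\big)$. Next, applying $\Hom_{\B}(X,-)$ to $X'\to I_{X'}\xrightarrow{i}\Sigma X'\overset{\delta_{X'}}{\dashrightarrow}$ and using $\EE(X,I_{X'})=0$, the connecting map $\partial\colon\Hom_{\B}(X,\Sigma X')\to\EE(X,X')$ is surjective with kernel $\operatorname{Im}(i_{*})$. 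By the shape of the diagram in Definition \ref{d1}, $\overline{[\R]}(X,\Sigma X')=[\R](X,\Sigma X')\cap\operatorname{Im}(i_{*})=[\R](X,\Sigma X')\cap\ker\partial$, so $(X,X')$ is $\R$-rigid precisely when $\partial\big([\R](X,\Sigma X')\big)=0$. Writing a typical element as $jh$ and using $\partial(jh)=h^{*}(j^{*}\delta_{X'})$ together with surjectivity of $\Hom_{\B}(R_0,\Sigma X')\to\EE(R_0,X')$ (again because $\EE(R_0,I_{X'})=0$), I obtain $\partial\big([\R](X,\Sigma X')\big)=\operatorname{Im}\big(h^{*}\colon\EE(R_0,X')\to\EE(X,X')\big)$. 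Thus $(X,X')$ is $\R$-rigid if and only if this induced map on $\EE$ vanishes.

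Second I would connect this vanishing to $(\maltese)$. The morphism of $\EE$-triangles in $(\maltese)$ with identity on $\Omega R_0$ and $h\colon X\to R_0$ on the third terms gives $\delta_X=h^{*}\delta_{R_0}$, where $\delta_X\in\EE(X,\Omega R_0)$ and $\delta_{R_0}\in\EE(R_0,\Omega R_0)$ realize the columns $\Omega R_0\xrightarrow{f}\Omega R_1\xrightarrow{g}X\dashrightarrow$ and $\Omega R_0\to P\to R_0\dashrightarrow$ respectively. Applying $\Hom_{\B}(-,X')$ to the latter and using $\EE(P,X')=0$ makes the connecting map $\rho\colon\Hom_{\B}(\Omega R_0,X')\to\EE(R_0,X')$ surjective; combined with $\delta_X=h^{*}\delta_{R_0}$ and the bifunctoriality identity $a_{*}h^{*}=h^{*}a_{*}$ this yields $\operatorname{Im}(\partial_X)=\operatorname{Im}\big(h^{*}\colon\EE(R_0,X')\to\EE(X,X')\big)$, where $\partial_X\colon\Hom_{\B}(\Omega R_0,X')\to\EE(X,X')$ is the connecting map of the first column. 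Since $\Hom_{\B}(f,X')$ is surjective if and only if $\partial_X=0$, the criterion is proved. For the forward implication I then note that the quotient functor $\h\to\overline\h$ is full, so surjectivity of $\Hom_{\B}(f,X')$ upgrades to surjectivity of $\Hom_{\overline\h}(\overline f,X')$ for every $X'\in\overline\X$, giving $\tau$-rigidity; the reverse implication combines Lemma \ref{1} with the criterion.

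The main obstacle I expect is the bookkeeping of connecting homomorphisms in the extriangulated setting: verifying $\delta_X=h^{*}\delta_{R_0}$ straight from the morphism of $\EE$-triangles, the naturality identity $\partial(jh)=h^{*}(j^{*}\delta_{X'})$, and the commutation $a_{*}h^{*}=h^{*}a_{*}$ all rely on the compatibility axioms of $(\EE,\mathfrak s)$ rather than on any octahedral calculus, and must be stated with care. A secondary point is that the definition of $\tau$-rigidity permits an arbitrary projective presentation whereas my criterion is tied to the presentation from $(\maltese)$; this gap is exactly what Lemma \ref{1} closes, since the surjectivity condition is independent of the chosen presentation. Reconciling summands lying in $\R$ with Proposition \ref{sum} is forced and introduces no genuine difficulty.
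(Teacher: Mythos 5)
Your proof is correct and, at the level of strategy, coincides with the paper's: both directions rest on the presentation coming from the diagram $(\maltese)$, the forward direction on fullness of the quotient functor, and the converse on Lemma \ref{1}. Where you differ is in how the pivotal equivalence is verified. The paper glues the two columns of $(\maltese)$ to the $\EE$-triangle $X'\to I_{X'}\xrightarrow{i}\Sigma X'$ and argues by a diagram chase, using the factorization property of morphisms of $\EE$-triangles (Remark 2.2) to pass between ``$jh$ factors through $i$'' and ``$x'$ factors through $f$''. You instead identify both conditions with the vanishing of the single map $h^{*}\colon\EE(R_0,X')\to\EE(X,X')$, using that $h$ is a left $\R$-approximation, that $\delta_X=h^{*}\delta_{R_0}$, and the naturality of the connecting maps in the long exact sequences attached to $\EE$-triangles (all available in \cite{NP}); this linearized version is a correct reorganization of the same computation and has the mild advantage of making it transparent that surjectivity of $\Hom_{\B}(f,-)$ does not depend on the chosen presentation. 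One point to be careful about, which you in fact share with the paper: by Proposition \ref{sum}, a summand $R\in\X\cap\R$ contributes the condition $\EE(R,\X)=0$ to $\R$-rigidity, and this condition is invisible to $\tau$-rigidity of $\overline{\X}$ since $R$ becomes zero in $\overline{\h}$; so such summands are not simply ``dictated by the rigidity of $\R$'' as you assert. The paper's converse likewise only treats $X\in\X_{\R}$ (that is the scope of Lemma \ref{1}), and the hypothesis $\X\cap\R=\{R\in\R\mid\EE(R,\X)=0\}$ is only imposed later in Theorem \ref{main2}, so this is an imprecision inherited from the statement rather than a defect specific to your argument.
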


\begin{proof}
Let $X\in \X$. Then we have the following commutative diagram
$$\xymatrix{
\Omega R_0 \ar@{=}[r] \ar[d]^f  &\Omega R_0 \ar[d]^{p_1}\\
\Omega R_1 \ar[r]^b \ar[d]^g &P \ar[r] \ar[d] &R_1 \ar@{=}[d] \ar@{-->}[r] &\\
X \ar[r]^h \ar@{-->}[d] &R_0 \ar[r] \ar@{-->}[d] &R_1 \ar@{-->}[r] &\\
& & &
}
$$
of $\EE$-triangles which induces an exact sequence in $\overline \h$
$$\Omega R_0\xrightarrow{\overline f}\Omega R_1\xrightarrow{\overline g}X\rightarrow 0. $$
Let $X'\in \X$. Since $\B$ has enough injectives, $X'$ admits an $\EE$-triangle
$$\xymatrix@C=0.8cm@R0.6cm{X'\ar[r] &I_{X'} \ar[r]^{i\;\;} &\Sigma X' \ar@{-->}[r] &}$$ where $I_{X'}\in \mathcal I$. Let $j:R_0\to \Sigma X'$ (resp. $x':\Omega R_0\to X'$) be any morphism. Then we get the following commutative diagram
$$\xymatrix{
\Omega R_0 \ar@{=}[r] \ar[d]^f  &\Omega R_0 \ar[d]^{p_1} \ar[r]^{x'} &X' \ar[d]\\
\Omega R_1 \ar[r]^b \ar[d]^g &P \ar[r] \ar[d] &I_{X'} \ar[d]^i \\
X \ar[r]^h \ar@{-->}[d] &R_0 \ar[r]^j \ar@{-->}[d] &\Sigma X' \ar@{-->}[d]\\
& & & &
}
$$
of $\EE$-triangles.

When $\X$ is $\R$-rigid, we have $jh$ factors through $i$ and then $x'$ factors through $f$.
Thus $\Hom_{\overline \h}(\overline f, X')$ is surjective. This shows that $\overline{\X}$ is $\tau$-rigid.


Now assume that $\overline \X$ is
$\tau$-rigid. By Lemma \ref{1} $\Hom_{B}(f, X')$ is epimorphism, then there exists a morphism $x_1\colon\Omega R_1\to X'$ such that $x'=x_1f$. Hence $jh$ factors through $i$. Hence by definition $\X$ is $\R$-rigid.
\end{proof}

At last, we give the following lemma, which is a special case of \cite[Lemma 5.2]{IJY}.

\begin{lem}\label{fac}
If $\overline \X\subseteq \overline \h$ is $\tau$-rigid, then $\Ext^1_{\overline \h}(\overline \X,\Fac \overline \X )=0$, where $\Fac \overline \X$ is the subcategory of quotients of objects in $\overline \X$.
\end{lem}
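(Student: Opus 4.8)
The plan is to reduce the vanishing of $\Ext^1$ to a lifting property of homomorphisms and then feed in the defining property of $\tau$-rigidity. Fix $X\in\overline\X$ and $Y\in\Fac\overline\X$, so there is an epimorphism $q\colon \overline X''\twoheadrightarrow Y$ with $\overline X''\in\overline\X$. Since $\overline\X$ is $\tau$-rigid, $X$ fits into an exact sequence $\Omega R_0'\xrightarrow{\overline{f'}}\Omega R_1'\to X\to 0$ with $\Hom_{\overline\h}(\overline{f'},X')$ surjective for every $X'\in\overline\X$; by Proposition \ref{b1}(b) both $\Omega R_0'$ and $\Omega R_1'$ are projective in $\overline\h$. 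Factor $\overline{f'}$ as $\Omega R_0'\xrightarrow{p}K\xrightarrow{\iota}\Omega R_1'$ with $p$ epic and $\iota$ monic, so that $K=\Im\overline{f'}$ and $0\to K\xrightarrow{\iota}\Omega R_1'\to X\to 0$ is a short exact sequence.

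First I would apply $\Hom_{\overline\h}(-,Y)$ to this short exact sequence. Because $\Omega R_1'$ is projective we have $\Ext^1_{\overline\h}(\Omega R_1',Y)=0$, so the long exact sequence identifies $\Ext^1_{\overline\h}(X,Y)$ with the cokernel of the restriction map $\iota^{*}\colon\Hom_{\overline\h}(\Omega R_1',Y)\to\Hom_{\overline\h}(K,Y)$. Thus it suffices to prove that every morphism $a\colon K\to Y$ extends along $\iota$ to a morphism $\Omega R_1'\to Y$.

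For this I would chase the diagram as follows. Given $a\colon K\to Y$, the composite $ap\colon \Omega R_0'\to Y$ lifts through the epimorphism $q$ to some $\tilde b\colon \Omega R_0'\to \overline X''$, using that $\Omega R_0'$ is projective. Because $\overline X''\in\overline\X$ and $\overline\X$ is $\tau$-rigid, $\Hom_{\overline\h}(\overline{f'},\overline X'')$ is surjective, so $\tilde b$ factors through $\overline{f'}$, say $\tilde b=c\,\overline{f'}=c\iota p$ for some $c\colon\Omega R_1'\to\overline X''$. Then $qc\iota p=q\tilde b=ap$, and since $p$ is epic we conclude $qc\iota=a$; that is, $qc\colon\Omega R_1'\to Y$ restricts to $a$ along $\iota$. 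Hence $\iota^{*}$ is surjective and $\Ext^1_{\overline\h}(X,Y)=0$.

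The argument is essentially a formal diagram chase once the two projectivity inputs and the $\tau$-rigid factorization are in place, so I do not anticipate a serious obstacle. The one point demanding care is that the $\tau$-rigid hypothesis may only be invoked for targets lying in $\overline\X$; this is precisely why one lifts $ap$ into $\overline X''\in\overline\X$ rather than working with $Y$ directly, and it explains why the statement concerns $\Fac\overline\X$ (quotients of objects of $\overline\X$) rather than arbitrary objects of $\overline\h$.
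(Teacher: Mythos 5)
Your proof is correct: the identification of $\Ext^1_{\overline\h}(X,Y)$ with the cokernel of $\iota^{*}$ via the projectivity of $\Omega R_1'$, the lift of $ap$ through $q$ using the projectivity of $\Omega R_0'$, and the factorization of $\tilde b$ through $\overline{f'}$ supplied by $\tau$-rigidity (applied with target $X''\in\overline\X$) fit together exactly as claimed, and cancelling the epimorphism $p$ closes the diagram chase. The paper itself gives no proof here, only a citation to \cite[Lemma 5.2]{IJY}; your argument is the standard one underlying that reference, so it fills in the omitted details rather than taking a different route.
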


\section{Support $\tau$-tilting subcategories and maximal relative rigid subcategories}

We give the following definition, which is a special case of \cite[Definition 1.3 (iv)]{IJY}.

\begin{defn}
A pair $(\overline \W,\overline \s)$ is called a support $\tau$-tilting pair of $\overline \h$ if the following conditions are satisfied:
\begin{itemize}
\item[(a)] $\W\subseteq \h_\R$ is an $\R$-rigid subcategory, $\s=\{\Omega R\in \Omega \R \text{ }|\text{ }\Hom_{\uB}(\Omega R,\W)=0 \}$.
\item[(b)] For any object $U\in \overline {\Omega \R}$, there exists an exact sequence $U\xrightarrow{\overline f} W_1\to W_2\to 0$ where $W_1,W_2\in \overline \W$, and $\overline f$ is a left $\overline \W$-approximation of $B$.
\end{itemize}
We call $\overline \W$ a support $\tau$-tilting subcategory if it admits a support $\tau$-tilting pair $(\overline \W,\overline \s)$.
\end{defn}

\begin{rem}
Since $\Hom_{\uB}(\Omega \R,-)\simeq \EE(\R,-)$, we have $\s=\{\Omega R\in \Omega \R \text{ }|\text{ }\EE( R,\W)=0 \}$.
\end{rem}

\begin{prop}\label{supportcluster}
Let $\X\subseteq \h$ be an $\R$-rigid subcategory such that $\X\cap \R=\{R\in \R \text{ }|\text{ }\EE(R,\X)=0 \}$. If any object $\Omega R\in \Omega \R$ admits an $\EE$-triangle $\xymatrix{\Omega R\ar[r]^f &X^1 \ar[r] &X^2\ar@{-->}[r] &}$ where $X^1,X^2\in \X$ and $f$ is a left $\X$-approximation, then $\overline \X$ is support $\tau$-tilting. Especially, when $\R$ is cluster tilting and $\X$ is a cluster tilting subcategory such that $\X\neq \R$, we have $\overline \X$ is a support $\tau$-tilting subcategory in $\overline \h$ (=$\oB$).
\end{prop}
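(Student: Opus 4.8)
The plan is to build an explicit support $\tau$-tilting pair. I would set $\W=\X_\R$; since every object of $\R$ is zero in $\overline\h$, the subcategories $\overline\W$ and $\overline\X$ coincide, so it is enough to show that $(\overline\W,\overline\s)$ is a support $\tau$-tilting pair, where $\s=\{\Omega R\in\Omega\R\mid\EE(R,\W)=0\}$ as prescribed. Condition (a) is then essentially formal: $\W\subseteq\h_\R$ by definition of $\X_\R$, and $\W$ is $\R$-rigid because, by Definition \ref{d1}, whether a morphism lies in $\overline{[\R]}(M,\Sigma N)$ depends only on $M$ and $N$, so any subcategory of the $\R$-rigid $\X$ is $\R$-rigid. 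The one preliminary fact I would isolate at the outset is that $\mathcal P\subseteq\X$: since $\EE(P,-)=0$ for projective $P$, we have $\EE(P,\X)=0$, hence $P\in\{R\in\R\mid\EE(R,\X)=0\}=\X\cap\R\subseteq\X$. This is the hypothesis on $\X\cap\R$ doing its work, and it is exactly what the argument for (b) needs.

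For (b), fix $U=\overline{\Omega R}$ and take the hypothesised $\EE$-triangle $\Omega R\xrightarrow{f}X^1\xrightarrow{g}X^2\dashrightarrow\delta$ with $X^1,X^2\in\X$ and $f$ a left $\X$-approximation. Applying the functor $H$ of Proposition \ref{b1} yields a sequence $\overline{\Omega R}\xrightarrow{\overline f}\overline{X^1}\xrightarrow{\overline g}\overline{X^2}$ that is exact in the middle, and $\overline{X^1},\overline{X^2}\in\overline\X=\overline\W$. That $\overline f$ is a left $\overline\W$-approximation I would check by lifting: a morphism $\overline\beta\colon\overline{\Omega R}\to\overline{X_0}$ with $X_0\in\X_\R$ lifts to $\beta\colon\Omega R\to X_0$ in $\B$, which factors as $\beta=\beta'f$ because $f$ is a left $\X$-approximation and $X_0\in\X$; hence $\overline\beta=\overline{\beta'}\,\overline f$.

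The genuine obstacle is right exactness, i.e. showing that $\overline g$ is an epimorphism, so that $\Coker\overline f=\overline{X^2}\in\overline\W$ and the sequence becomes $U\xrightarrow{\overline f}\overline{X^1}\to\overline{X^2}\to0$. This is where $\mathcal P\subseteq\X$ is used. I would write the defining triangle $\Omega R\xrightarrow{a}P\xrightarrow{b}R\dashrightarrow$ with $P\in\mathcal P$; since $P\in\X$ and $f$ is a left $\X$-approximation, $a$ factors as $a=\tilde a f$, and because $f_*\delta=0$ (a class pushed along its own inflation vanishes) we get $a_*\delta=\tilde a_*f_*\delta=0$. Forming the pushout of $a$ and $f$ via Remark \ref{useful}(c) then produces an object $G$ sitting in two $\EE$-triangles, $P\to G\xrightarrow{e}X^2\dashrightarrow a_*\delta=0$ and $X^1\xrightarrow{j}G\to R\dashrightarrow$, with $g=ej$. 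The first splits (as $a_*\delta=0$) and $\overline P=0$, so $\overline e\colon\overline G\xrightarrow{\ \sim\ }\overline{X^2}$; the second, since $\overline R=0$, forces $\overline j\colon\overline{X^1}\to\overline G$ to be epic. Therefore $\overline g=\overline e\,\overline j$ is epic. I expect this splitting to be the only subtle point; once $\mathcal P\subseteq\X$ is noted it is forced by the left-approximation property, and without it the length of the exact sequences available in an extriangulated category is not enough to control the cokernel.

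For the cluster tilting case I would only verify the hypotheses of the general statement. If $\R$ is cluster tilting then $\h=\CoCone(\R,\R)=\B$ and $\overline\h=\oB$; a cluster tilting $\X$ is rigid, hence $\R$-rigid, and for $R\in\R$ one has $R\in\X\Leftrightarrow\EE(R,\X)=0$, so $\X\cap\R=\{R\in\R\mid\EE(R,\X)=0\}$. Moreover $\Omega R\in\Omega\R\subseteq\B=\CoCone(\X,\X)$ gives an $\EE$-triangle $\Omega R\xrightarrow{f}X^1\to X^2\dashrightarrow$ with $X^1,X^2\in\X$, and applying $\Hom_\B(-,X_0)$ for $X_0\in\X$ shows $f$ is a left $\X$-approximation because $\EE(X^2,X_0)=0$. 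Hence the general statement applies and $\overline\X$ is support $\tau$-tilting in $\oB$, the assumption $\X\neq\R$ serving only to keep $\overline\X$ nonzero.
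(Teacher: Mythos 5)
Your argument is correct and follows essentially the same route as the paper: both proofs compare the given $\EE$-triangle $\Omega R\to X^1\to X^2\dashrightarrow$ with the projective presentation $\Omega R\to P\to R\dashrightarrow$ (which requires your observation that $\mathcal P\subseteq\X$, implicit in the paper) and read off the required exact sequence $\overline{\Omega R}\to \overline{X^1}\to\overline{X^2}\to 0$ with $\overline f$ a left $\overline\X$-approximation. The only difference is that you spell out, via the pushout and the splitting of $a_*\delta$, why $\overline g$ is an epimorphism, a step the paper's proof leaves implicit in its commutative diagram.
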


\begin{proof}
Since $\X$ is $\R$-rigid, $\overline \X$ is $\tau$-rigid.
By assumption, any object $\Omega R\in \Omega \R$ admits a commutative diagram of $\EE$-triangles
$$\xymatrix{
\Omega R \ar[r]^f \ar@{=}[d] &X^1 \ar[d] \ar[r] &X^2 \ar[d] \ar@{-->}[r] &\\
\Omega R \ar[r] &P \ar[r] &R \ar@{-->}[r] &
}
$$
where $X^1,X^2\in \X$, $P\in \mathcal P$, $R\in \R$ and $f$ is a left $\X$-approximation. Then it admits an exact sequence $\Omega R\xrightarrow{\overline f} X^1\to X^2\to 0$ in $\overline \h$, where $\overline f$ is a left $\overline \X$-approximation. Hence by definition $\overline \X$ is support $\tau$-tilting.

When $\R$ is cluster tilting, we have $\h=\B$. When $\X$ is a cluster tilting subcategory such that $\X\neq \R$, we get that it is $\R$-rigid since cluster tilting subcategories are rigid. By the definition of cluster tilting subcategory, we have $\X\cap \R=\{R\in \R \text{ }|\text{ }\EE(R,\X)=0 \}$. Hence $\overline \X$ is a support $\tau$-tilting subcategory in $ \oB$.
\end{proof}

\begin{lem}\label{rep}
Let $\overline \W$ be a support $\tau$-tilting subcategory in $\overline \h$ such that $\W\subseteq \h_\R$. Let $\T=\{R\in \R \text{ }|\text{ }\EE(R,\W)=0 \}$ and $\X=\add(\W\cup \T)$. Then any object $R\in \R$ admits an $\EE$-triangle $$\xymatrix{X_1\ar[r] &X_2\ar[r]^{r} &R\ar@{-->}[r] &}$$
where $X_1,X_2\in \X$. Moreover, $r$ is a right $\X$-approximation of $R$.
\end{lem}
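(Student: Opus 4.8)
The plan is to build the required $\EE$-triangle by grafting the left $\overline{\W}$-approximation coming from the support $\tau$-tilting pair onto the canonical projective $\EE$-triangle of $R$, and then to check separately that the middle term lies in $\X$ and that the map to $R$ is a right $\X$-approximation. Concretely, I would fix $R\in\R$ together with the $\EE$-triangle $\Omega R\xrightarrow{a}P\xrightarrow{b}R\dashrightarrow\delta$ with $P\in\mathcal P$. Since $\Omega R\in\overline{\Omega\R}$, condition (b) of the support $\tau$-tilting pair supplies an exact sequence $\Omega R\xrightarrow{\overline f}W_1\to W_2\to 0$ in $\overline\h$ with $W_1,W_2\in\overline\W$ and $\overline f$ a left $\overline\W$-approximation. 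Lifting $\overline f$ to $f\colon\Omega R\to W_1$ and applying the morphism-of-$\EE$-triangles construction (part (c) of the basic Remark on $\EE$-triangles) to $f$ and $\delta$, I obtain both a bottom row $W_1\xrightarrow{d}E\xrightarrow{e}R\dashrightarrow\eta$ and the $\EE$-triangle $\Omega R\xrightarrow{\binom{f}{a}}W_1\oplus P\xrightarrow{\phi}E\dashrightarrow$ with $\phi=(d,-g)$. The candidate is $X_1=W_1$, $X_2=E$, $r=e$.

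Next I would identify $E$. By Lemma \ref{useful} applied to $W_1\to E\to R\dashrightarrow$ (with $W_1\in\h$ and $R\in\R$) we get $E\in\h$. Applying the homological functor $H$ of Proposition \ref{b1}(c) to the two $\EE$-triangles above and using $H(P)=H(R)=0$ shows that $E\cong\Coker(\overline f)=W_2$ in $\overline\h$. Since $W_2\in\h_\R$ has no nonzero summand in $\R$, a Krull--Schmidt argument then yields $E\cong W_2\oplus R_E$ in $\B$ for some $R_E\in\R$. At this point $X_1=W_1\in\W\subseteq\X$ and $X_2=E\in\add(\W\cup\R)$, so the only thing missing is that the extra $\R$-summand $R_E$ actually lies in $\T$.

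To promote $R_E$ to $\T$ I would first record the key auxiliary fact that $\binom{f}{a}$ is a genuine left $\W$-approximation of $\Omega R$ in $\B$: any $\Omega R\to W'$ with $W'\in\W$ factors through $\overline f$ modulo $[\R]$, and the residual $[\R]$-part factors through $a$, because every map $\Omega R\to R'\in\R$ lifts along $a$ (its obstruction sits in $\EE(R,R')=0$ by rigidity of $\R$). Feeding this into the long exact $\Hom(-,W')$-sequence of $\Omega R\xrightarrow{\binom{f}{a}}W_1\oplus P\to E\dashrightarrow$ kills the connecting map and makes $d^*\colon\EE(E,W')\to\EE(W_1,W')$ injective. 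Now the component $\pi_{R_E}\,d\colon W_1\to R_E$ is zero in $\overline\h$ (as $R_E\cong 0$ there), hence factors through $\R$; so for any $\xi\in\EE(R_E,W')$ the class $d^*\pi_{R_E}^*\xi$ has classifying map $W_1\to\Sigma W'$ factoring through $R_E\in\R$, i.e.\ lies in $[\R](W_1,\Sigma W')$. Since $\W$ is $\R$-rigid, $[\R](\W,\Sigma\W)$ has zero image in $\EE(\W,\W)$ (this is exactly the content of Definition \ref{d1}), so $d^*\pi_{R_E}^*\xi=0$; injectivity of $d^*\pi_{R_E}^*$ then forces $\EE(R_E,W')=0$. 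Thus $R_E\in\T$ and $X_2=E\in\X$.

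Finally, the same rigidity input yields the approximation property. For $c\colon X'\to R$ with $X'\in\X$, the obstruction to factoring $c$ through $e$ is $c^*\eta\in\EE(X',W_1)$. For $X'\in\T$ this vanishes since $\EE(\T,\W)=0$; for $X'\in\W$ the classifying map of $c^*\eta$ factors through $R\in\R$, hence lies in $[\R](\W,\Sigma\W)$ and dies in $\EE$ by $\R$-rigidity of $\W$; by additivity this covers all of $\add(\W\cup\T)$, so $e$ is a right $\X$-approximation. I expect the main obstacle to be the bookkeeping of the third paragraph, namely verifying that $\binom{f}{a}$ is an honest (not merely stable) $\W$-approximation and that the residual summand $R_E$ lands in $\T$: this is where the passage between $\overline\h$ and the ambient $\B$, together with the two separate appeals to $\R$-rigidity, must be combined carefully.
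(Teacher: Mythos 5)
Your proposal is correct and follows essentially the same route as the paper: graft the left $\overline{\W}$-approximation of $\Omega R$ onto the projective presentation of $R$, identify the cone as $W_2\oplus R_E$ via Krull--Schmidt, promote $R_E$ to $\T$ using that the lifted map is a genuine left approximation (every $\Omega R\to\R$ factors through $P$ since $\EE(\R,\R)=0$), and deduce the right-approximation property from $\R$-rigidity. The only cosmetic difference is that you phrase the $R_E\in\T$ step via injectivity of $d^*$ in the long exact sequence, whereas the paper performs the equivalent diagram chase showing $[\R](X_2,\Sigma X)=\overline{[\R]}(X_2,\Sigma X)$ directly.
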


\begin{proof}
For any object $R\in \R$, $\Omega R$ admits an exact sequence $\Omega R \xrightarrow{\overline t_1} W_1 \to W_2\to 0$ where $W_1,W_2\in \W$ and $\overline t_1$ is a left $\overline \W$-approximation of $\Omega R$. We have the following commutative diagram
$$\xymatrix{
\Omega R \ar[r]^q \ar[d]^{t_1} &P\ar[r] \ar[d]^{p} &R \ar@{=}[d] \ar@{-->}[r] &\\
W_1 \ar[r]^w &X_2 \ar[r]^r &R \ar@{-->}[r] &.
}
$$
By Lemma \ref{useful} $X_2\in \h$. Let $X_2=W_2'\oplus R'$ where $W_2'\in \h_\R$ and $R'\in \R$. Then $W_2'\simeq W_2$. We need to show $R'\in \T$. It is enough to prove that for any morphism $c: R'\to \Sigma X$ where $X\in \X$, in the following commutative diagram
$$\xymatrix{
\Omega R' \ar[r]^{p'} \ar[d]^a &P' \ar[r] \ar[d] &R' \ar[d]^c \ar@{-->}[r] &\\
X\ar[r] &I \ar[r]^i &\Sigma X \ar@{-->}[r] &
}
$$
$c$ factors through $i$. Hence it is enough to show $\overline {[\R]}(X_2, X)={[\R]}(X_2,X)$.\\
Let $f\in {[\R]}(X_2,\Sigma X)$. There is a morphism $j_1:W_1\to I$ such that $fw=ij_1$. Since $P$ is projective, there is a morphism $p':P\to I$ such that $ip'=fp$. Hence we have the following commutative diagram
$$\xymatrix{
\Omega R\ar[r]^-{\svecv{q}{t_1}} \ar@{.>}[d]_t &P\oplus W_1 \ar[r]^-{\svech{-p}{w}} \ar[d]^-{\svech{-p'}{j_1}} &X_2 \ar[d]^f \ar@{-->}[r] &\\
X \ar[r] &I \ar[r]_i &\Sigma X \ar@{-->}[r] &
}
$$
Since $\overline t_1$ is a left $\overline \W$-approximation, we can get that $\Omega R \xrightarrow{\svecv{q}{t_1}} P\oplus W_1$ is a left $\X$-approximation. Hence $t$ factors through $\svecv{q}{t_1}$, which implies $f$ factors through $i$.\\
Let $x:X\to R$ be any morphism, then in the following commutative diagram
$$\xymatrix{
&&X \ar[d]^x\\
X_1\ar[r] \ar@{=}[d] &X_2\ar[r]^{r} \ar[d] &R\ar@{-->}[r] \ar[d]^{r'} &\\
X_1 \ar[r] &I_1 \ar[r]^{i_1} &\Sigma X_1 \ar@{-->}[r] &
}
$$
$r'x$ factors through $i_1$. Hence $x$ factors through $r$, which means $r$ is a right $\X$-approximation of $R$.
\end{proof}

By this Lemma, we can get the following corollary.

\begin{cor}\label{repcor}
Let $\overline \W$ be a support $\tau$-tilting subcategory in $\overline \h$ such that $\W\subseteq \h_\R$. Let $\T=\{R\in \R \text{ }|\text{ }\EE(R,\W)=0 \}$ and $\X=\add(\W\cup \T)$. If $R\in \R$ admits an $\EE$-triangle $\xymatrix{X_1\ar[r] &X_2\ar[r]^{r} &R\ar@{-->}[r] &}$ where $r$ is a right $\X$-approximation, then $X_1\in \X$.
\end{cor}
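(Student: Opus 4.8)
The plan is to compare the given $\EE$-triangle with the canonical right $\X$-approximation triangle furnished by Lemma \ref{rep}, and from this comparison to exhibit $1_{X_1}$ as a morphism factoring through an object of $\X$; since $\B$ is Krull--Schmidt (hence idempotent complete) and $\X$ is closed under direct summands, this forces $X_1\in\X$. Throughout I write the given $\EE$-triangle as $X_1\xrightarrow{m}X_2\xrightarrow{r}R\dashrightarrow$, and I record that, by the very definition of a right $\X$-approximation, its domain satisfies $X_2\in\X$.

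First I would invoke Lemma \ref{rep} to fix an $\EE$-triangle $X_1'\xrightarrow{m'}X_2'\xrightarrow{r'}R\dashrightarrow$ with $X_1',X_2'\in\X$ and $r'$ a right $\X$-approximation; in particular $X_1'\in\X$. Because $r$ and $r'$ are both right $\X$-approximations of $R$ and both $X_2,X_2'\in\X$, the two deflations factor through one another: there are $\phi\colon X_2'\to X_2$ and $\psi\colon X_2\to X_2'$ with $r\phi=r'$ and $r'\psi=r$. I would then apply (ET3)$^{\rm op}$ to the commutative squares on the deflation side to lift $\phi$ and $\psi$ to \emph{morphisms of $\EE$-triangles} $(\phi_1,\phi,1_R)$ and $(\psi_1,\psi,1_R)$, with $\phi_1\colon X_1'\to X_1$ and $\psi_1\colon X_1\to X_1'$. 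Using (ET3)$^{\rm op}$ here is the point where the extriangulated bookkeeping matters: it guarantees that these are genuine morphisms of $\EE$-triangles respecting the extension classes, and not merely diagrams of commuting squares, so that the factorization criterion of Remark \ref{useful}(a) becomes applicable.

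Next, the composite $(\phi_1\psi_1,\phi\psi,1_R)$ is an endomorphism of the $\EE$-triangle $X_1\xrightarrow{m}X_2\xrightarrow{r}R\dashrightarrow$, so subtracting the identity endomorphism produces a morphism of $\EE$-triangles $(\phi_1\psi_1-1_{X_1},\ \phi\psi-1_{X_2},\ 0)$ whose third component is $0$. By Remark \ref{useful}(a) (with $h=0$, which trivially factors through $r$), the first component $\phi_1\psi_1-1_{X_1}$ factors through $m\colon X_1\to X_2$, hence through $X_2\in\X$. On the other hand $\phi_1\psi_1=\phi_1\circ\psi_1$ visibly factors through $X_1'\in\X$. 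Writing $1_{X_1}=\phi_1\psi_1-(\phi_1\psi_1-1_{X_1})$ therefore exhibits $1_{X_1}$ as a morphism factoring through $X_1'\oplus X_2\in\X$. Finally, since $\B$ is Krull--Schmidt and $\X$ is closed under direct summands, a factorization of $1_{X_1}$ through an object of $\X$ forces $X_1$ to be a direct summand of that object, so $X_1\in\X$, as claimed.

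I expect the third step — identifying $\phi_1\psi_1-1_{X_1}$ as a morphism that factors through $X_2$ — to be the main obstacle, since this is the part that genuinely exploits the extriangulated structure rather than formal algebra: one must know that the comparison maps assemble into honest morphisms of $\EE$-triangles (supplied by (ET3)$^{\rm op}$) and then invoke Remark \ref{useful}(a) in the degenerate case $h=0$. The remaining ingredients, namely the mutual factorization of the two right $\X$-approximations and the Krull--Schmidt argument that ``$1_{X_1}$ factors through $\X$'' implies $X_1\in\X$, are routine.
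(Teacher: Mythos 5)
Your proof is correct and follows the same route the paper intends: compare the given triangle with the approximation triangle from Lemma \ref{rep} via the mutual factorization of the two right $\X$-approximations, and conclude that $X_1$ is a direct summand of $X_1'\oplus X_2\in\X$. The paper leaves this step implicit because it is exactly the second half of Remark \ref{useful}(b) (the case $ht=1_C$); your third paragraph simply re-derives that part of the remark from part (a), which is a legitimate, slightly more explicit version of the same argument.
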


\begin{defn}
Let $\X\subseteq \h$ be $\R$-rigid. $\X$ is called maximal $\R$-rigid if for any subcategory $\X'\subseteq \h$, $\add(\X\cup \X')$ is $\R$-rigid implies $\X'\subseteq \X$. An object $X\in\h$ is called maximal $\R$-rigid if $\add(\{X \}\cup\mathcal P)$ is a maximal $\R$-rigid subcategory.
\end{defn}

\begin{prop}\label{completion}
Let $\X$ be a subcategory of $\h$ such that $\X\cap \R=\{R\in \R \text{ }|\text{ }\EE(R,\X)=0 \}$. If $\X$ is an $\R$-rigid subcategory of $\h$ such that every object in $\Omega \R$ admits a left $\X$-approximation, then $\overline \X$ is contained in a support $\tau$-tilting subcategory $\{ Y\in \Fac \overline \X \text{ }|\text{ } \Ext^1_{\oB}(Y, \Fac \overline \X)=0 \}=:\mathbf{P}(\Fac \overline \X)$.
\end{prop}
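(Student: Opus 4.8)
The plan is to prove the two assertions separately: that $\overline\X\subseteq\mathbf{P}(\Fac\overline\X)$, which is easy, and that $\mathbf{P}(\Fac\overline\X)$ is support $\tau$-tilting, which is the real content. Write $\T=\Fac\overline\X$; it is closed under quotients and finite direct sums, so $\mathbf{P}(\T)=\{Y\in\T\mid\Ext^1_{\oB}(Y,\T)=0\}$ is a subcategory of $\overline\h$ closed under summands. For the containment, Theorem \ref{main1} gives that $\overline\X$ is $\tau$-rigid, whence $\Ext^1_{\oB}(\overline\X,\T)=0$ by Lemma \ref{fac}; since trivially $\overline\X\subseteq\T$, we get $\overline\X\subseteq\mathbf{P}(\T)$. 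Because $\overline\h$ has the same objects as $\h$ and every object of $\R$ is zero in $\overline\h$, replacing each object by its largest summand in $\h_\R$ lets me fix a subcategory $\W\subseteq\h_\R$ with $\overline\W=\mathbf{P}(\T)$; I then verify the defining conditions (a) and (b) of a support $\tau$-tilting pair for $(\overline\W,\overline\s)$, taking $\s$ as prescribed in (a).

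The heart is condition (b). By Proposition \ref{b1}(b) the projectives of $\overline\h$ are exactly the objects of $\overline{\Omega\R}$, so I fix $U=\overline{\Omega R}$. By hypothesis $\Omega R$ has a left $\X$-approximation $\Omega R\to X^0$ in $\B$; applying the functor $H$ of Proposition \ref{b1}(c) and lifting morphisms of $\overline\h$ back to $\B$, this yields a left $\overline\X$-approximation $\overline f\colon U\to W_1$ with $W_1=\overline{X^0}\in\overline\X\subseteq\mathbf{P}(\T)$. Because $U$ is projective, any map $U\to T'$ with $T'\in\T$ lifts through an epimorphism from an object of $\overline\X$ and hence factors through $\overline f$; thus $\overline f$ is even a left $\T$-approximation, and a fortiori a left $\mathbf{P}(\T)$-approximation. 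Setting $W_2=\Coker\overline f$ gives the exact sequence $U\xrightarrow{\overline f}W_1\to W_2\to0$ with $W_2\in\T$ (a quotient of $W_1$). The one computation I would spell out is $\Ext^1_{\oB}(W_2,\T)=0$: from $0\to K\to W_1\to W_2\to0$ with $K=\Im\overline f$ and $\Ext^1_{\oB}(W_1,\T)=0$, this reduces to extending every $\psi\colon K\to T'$ along $K\hookrightarrow W_1$; factoring $\overline f=\iota\pi$ with $\pi\colon U\twoheadrightarrow K$, the approximation property applied to $\psi\pi$ produces $\rho\colon W_1\to T'$ with $\rho\iota\pi=\psi\pi$, and cancelling the epimorphism $\pi$ gives $\rho\iota=\psi$. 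Hence $W_2\in\mathbf{P}(\T)$ and (b) holds.

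It remains to verify condition (a), namely that $\W$ is $\R$-rigid, which by Theorem \ref{main1} is equivalent to $\overline\W=\mathbf{P}(\T)$ being $\tau$-rigid. Since $\mathbf{P}(\T)\subseteq\T$ and $\T$ is quotient-closed, $\Fac\mathbf{P}(\T)\subseteq\T$, so $\Ext^1_{\oB}(\mathbf{P}(\T),\Fac\mathbf{P}(\T))\subseteq\Ext^1_{\oB}(\mathbf{P}(\T),\T)=0$ by the very definition of $\mathbf{P}(\T)$. I would then conclude $\tau$-rigidity from the characterization of $\tau$-rigid subcategories by this $\Ext$-vanishing against $\Fac$, that is, the converse of Lemma \ref{fac} (see \cite{IJY}). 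I expect this converse to be the main obstacle: Lemma \ref{fac} as stated is only one implication, and recovering the presentation/surjectivity form of $\tau$-rigidity from $\Ext^1_{\oB}(-,\Fac(-))=0$ is exactly the delicate direction, which must moreover be transported through the equivalence $\overline\h\simeq\mod\underline\R$ of Proposition \ref{b1}. The remaining work is bookkeeping: checking that approximations, cokernels, and projective presentations behave well under $H$ and under passage between $\B$, $\h$, and $\overline\h$.
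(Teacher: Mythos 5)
Your treatment of condition (b) and of the containment $\overline\X\subseteq\mathbf{P}(\Fac\overline\X)$ is sound and matches the paper: the cone of a left $\X$-approximation of $\Omega R$ lands in $\Fac\overline\X$, and the extension argument via the epic--monic factorization of $\overline f$ is exactly how the paper shows $\Ext^1_{\oB}(W_2,\Fac\overline\X)=0$. The genuine gap is where you yourself flag it, in condition (a): you want to conclude that $\mathbf{P}(\Fac\overline\X)$ is $\tau$-rigid from $\Ext^1_{\oB}(\mathbf{P}(\Fac\overline\X),\Fac\mathbf{P}(\Fac\overline\X))=0$ by invoking a converse to Lemma \ref{fac}. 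That converse is not stated in the paper and is not available for free: $\tau$-rigidity here is defined by the existence of a projective presentation $\Omega R_0'\to\Omega R_1'\to X\to 0$ whose first map is ``surjective against'' the whole subcategory, and recovering such a presentation from an $\Ext^1$-vanishing condition is an Auslander--Smal{\o}-type statement that would need minimal presentations and a workable $\tau$ in $\mod\underline\R$; nothing in the paper supplies this. So as written the proof does not close.

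The paper avoids this entirely by first proving the \emph{reverse} inclusion $\mathbf{P}(\Fac\overline\X)\subseteq\add\overline\Y$, where $\Y$ consists of the objects $\X$ together with the cones $U$ of left $\X$-approximations $\Omega R\to X$: given an indecomposable $Y\in\mathbf{P}(\Fac\overline\X)$, one takes an epimorphism $X_0\to Y$, builds an $\EE$-triangle $\Omega R_0\to X_0\oplus P_0\to Y\oplus R^1\dashrightarrow$, and uses $\Ext^1_{\oB}(Y,\Fac\overline\X)=0$ to show the first map is a left $(\Fac\overline\X)$-approximation, hence a left $\X$-approximation. With this explicit description in hand, the paper verifies directly in $\B$ that $\add\Y$ is $\R$-rigid (checking $[\R](\Y,\Sigma\Y)=\overline{[\R]}(\Y,\Sigma\Y)$ case by case for pairs from $\X$ and $\U$, using repeatedly that the maps $\Omega R\to X$ are left $\X$-approximations), and then Theorem \ref{main1} converts this into $\tau$-rigidity and Proposition \ref{supportcluster} into the support $\tau$-tilting property. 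In short: the missing ingredient in your argument is the identification $\mathbf{P}(\Fac\overline\X)=\add\overline\Y$, which is what allows rigidity to be checked by hand rather than deduced from an unproved converse of Lemma \ref{fac}. Without it, condition (a) is unestablished.
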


\begin{proof}
Let $\X$ be an $\R$-rigid subcategory of $\h$ such that every object in $\Omega \R$ admits a left $\X$-approximation. Then every object $\Omega R\in \Omega \R$ admits a commutative diagram
$$\xymatrix{
\Omega R \ar[r] \ar[d]^{f'} &P \ar[r]^p \ar[d] &R \ar@{-->}[r] \ar@{=}[d] &\\
X' \ar[r] &U \ar[r] &R \ar@{-->}[r] &
}
$$
where $P\in \mathcal P$, $R\in \R$ and $f'$ is a left $\X$-approximation. By Lemma \ref{useful} $U\in \h$. This diagram induces an $\EE$-triangle
$$\xymatrix{\Omega R \ar[r]^-{\alpha} &X'\oplus P \ar[r] &U \ar@{-->}[r] &}$$
where $\alpha$ is still a left $\X$-approximation. Let $\U$ be the subcategory of objects $U$ which admits
an $\EE$-triangle
$$\xymatrix{\Omega R \ar[r]^{f} &X \ar[r]^g &U \ar@{-->}[r] &}$$
where $f$ is a left $\X$-approximation. Let $\Y=\{ X\oplus U \text{ }|\text{ } X\in \X,U\in \U \}$. We show that $\add \overline \Y=\mathbf{P}(\Fac \overline \X).$\\
We first show that $\add \overline \Y\subseteq \mathbf{P}(\Fac \overline \X)$. Since $\mathbf{P}(\Fac \overline \X)$ is closed under direct summands, we only need to show $\overline \Y\subseteq \mathbf{P}(\Fac \overline \X)$. By Lemma \ref{fac}, we have $\overline \X\subseteq \mathbf{P}(\Fac \overline \X)$. Since $\mathbf{P}(\Fac \overline \X)$ is closed under direct sums, it is enough to show that $\U\subseteq \mathbf{P}(\Fac \overline \X)$.\\
By the definition, any object $U\in \U$ admits an $\EE$-triangle $\xymatrix{\Omega R \ar[r]^{f} &X \ar[r]^g &U \ar@{-->}[r] &}$ where $f$ is a left $\X$-approximation. It induces an exact sequence $\Omega R \xrightarrow{\overline f} X \xrightarrow{\overline g} U\to 0$ in $\overline \h$. Since $\Omega R$ is a projective object in $\overline \h$, $\overline f$ is in fact a left $(\Fac \overline \X)$-approximation. Let $\Omega R \xrightarrow{\overline f_1} V\xrightarrow{\overline f_2} X$ be an epic-monic factorization of $\overline f$. Then in the short exact sequence $0\to V\xrightarrow{\overline f_2} X\xrightarrow{\overline g} U\to 0$, $\overline f_2$ is a left $(\Fac \overline \X)$-approximation. Hence we have
$$\Hom_{\oB}(X,\Fac \overline \X)\to \Hom_{\oB}(V,\Fac \overline \X)\xrightarrow{0} \Ext^1_{\oB}(U,\Fac \overline \X)\to \Ext^1_{\oB}(X,\Fac \overline \X)=0$$
which implies that $\Ext^1_{\oB}(U,\Fac \overline \X)=0$. Since $U\in \Fac \overline \X$, we have $U\in \mathbf{P}(\Fac \overline \X)$.\\
Now we show that $\add \overline \Y\supseteq \mathbf{P}(\Fac \overline \X)$. It is enough to consider an indecomposable object $Y\in \mathbf{P}(\Fac \overline \X)$ such that $Y\notin \X$.\\
Since $Y\in \Fac \overline \X$, it admits an epimorphism $\overline y:X_0\to Y\to 0$ where $X_0\in \overline \X$. Then by \cite[Corollary 2.26]{LN}, we have a commutative diagram
$$\xymatrix{
X_0 \ar[r]^{r_1} \ar[d]^y &R^1 \ar[d] \ar[r] &R^2 \ar@{=}[d] \ar@{-->}[r] &\\
Y \ar[r] &R_0 \ar[r] &R^2 \ar@{-->}[r] &
}
$$
where $R_0,R^1,R^2\in \R$. Then we get a commutative diagram
$$\xymatrix{
\Omega R_0 \ar[r]^{p_0} \ar[d]^{x_0} &P_0 \ar[r] \ar[d] &R_0 \ar[d]  \ar@{-->}[r] &\\
X_0 \ar[r]_-{\svecv{y}{r_1}} &Y\oplus R^1 \ar[r] &R_0 \ar@{-->}[r] &
}
$$
which induces an $\EE$-triangle $\xymatrix{\Omega R_0 \ar[r]^-{\svecv{x_0}{p_0}=\alpha} &X_0\oplus P_0 \ar[r] &Y\oplus R^1\ar@{-->}[r] &}$. To show $Y\oplus R^1\in \U$, we need to prove that $\alpha$ is a left $\X$-approximation. This $\EE$-triangle induces an exact sequence $\Omega R_0 \xrightarrow{\overline x_0} X_0\xrightarrow{\overline y} Y\to 0$ in $\overline \h$, it is enough to show that $\overline x_0$ is a left $\overline \X$-approximation.\\
Let $\Omega R_0 \xrightarrow{\overline x_1} W\xrightarrow{\overline x_2} X$ be an epic-monic factorization of $\overline x_0$. Then for the short exact sequence $0\to W\xrightarrow{\overline x_2} X_0\xrightarrow{\overline y} Y\to 0$, we have
$$\Hom_{\oB}(X_0,\Fac \overline \X)\xrightarrow{\Hom_{\oB}(\overline x_2,\Fac \overline \X)} \Hom_{\oB}(W,\Fac \overline \X)\to \Ext^1_{\oB}(Y,\Fac \overline \X)=0$$
which implies that $\overline x_2$ is a left $(\Fac \overline X)$-approximation. Hence $\overline x_0$ is a left $\overline \X$-approximation.


We show that $\mathbf{P}(\Fac \overline \X)$ is support $\tau$-tilting. It is enough to show that $\add \Y$ is $\R$-rigid, since then by Proposition \ref{supportcluster} and Theorem \ref{main1}, $\mathbf{P}(\Fac \overline \X)$ is support $\tau$-tilting.\\
We only need to show $[\R](\Y,\Sigma Y)=[\overline \R](\Y,\Sigma Y)$. We first show that $\EE(R_0,U)\simeq \Hom_{\uB}(\Omega R_0,U)=0$ for any $R_0\in \T$ and $U\in \U$. Let $a:\Omega R_0\to U$ be any morphism. Since we have the following commutative diagram
$$\xymatrix{
&&\Omega R_0 \ar[d]^a\\
\Omega R \ar[r]^{f} \ar@{=}[d] &X \ar[r]^g \ar[d] &U \ar[d]^b \ar@{-->}[r] &\\
\Omega R \ar[r]  &P \ar[r]^p &R \ar@{-->}[r]&
}$$
where $ba$ factors through $\mathcal P$, we get that $ba$ also factors through $p$. Then there is a morphism $c:\Omega R_0\to X$ such that $a=gc$. But $0=\EE(R_0,X)\simeq \Hom_{\uB}(\Omega R_0,X)$, hence $\underline a=0$ and we can get  $\Hom_{\uB}(\Omega R_0,U)=0$.\\
Let $X_0\in \X$. For any morphism $u\in [\R](U,\Sigma X_0)$, since $ug\in [\R](X,\Sigma X_0)=[\overline \R](X,\Sigma X_0)$, we have the following commutative diagram
$$\xymatrix{
\Omega R \ar[r]^{f} \ar[d]^x &X \ar[r]^g \ar[d] &U \ar[d]^u \ar@{-->}[r] &\\
X_0 \ar[r] &I_0 \ar[r]^{j_0} &\Sigma X_0 \ar@{-->}[r] &.
}
$$
Since $f$ is a left $\X$-approximation, $x$ factors through $f$, then $u$ factors through $j_0$. Hence $u\in [\overline \R](U,\Sigma X_0)$ and we can get $[\R](U,\Sigma X_0)=[\overline \R](U,\Sigma X_0)$. This also shows that $(\add \Y)\cap \R\subseteq \X\cap\R$.\\
For any morphism $v\in [\R](X_0,\Sigma U)$, we have the following commutative diagram
$$\xymatrix{
\Omega X_0 \ar[r]^{q_0} \ar[d]^{c_0} &P_0 \ar[r]^{p_0} \ar[d] &X_0 \ar[d]^{a_0} \ar@{-->}[r] &\\
\Omega R' \ar[r] \ar[d]^{d_0} &P' \ar[r] \ar[d] &R' \ar[d]^{b_0} \ar@{-->}[r] &\\
U \ar[r] &I_U \ar[r]^{j_U} &\Sigma U \ar@{-->}[r] &
}
$$
where $b_0a_0=v$, $R'\in \R$, $P_0,P'\in\mathcal P$ and $I_U\in \mathcal I$. By the similar argument, we can show that there is a morphism $e:\Omega R'\to X$ such that $d_0=ge$. Then we have the following commutative diagram
$$\xymatrix{
\Omega X_0 \ar[r]^{q_0} \ar[d]^{c_0} &P_0 \ar[r]^{p_0} \ar[d] &X_0 \ar[d]^{a_0} \ar@{-->}[r] &\\
\Omega R' \ar[r] \ar[d]^{e} &P' \ar[r] \ar[d] &R' \ar[d]^{b_0'} \ar@{-->}[r] &\\
X \ar[r] &I \ar[r]^{j} &\Sigma X \ar@{-->}[r] &.
}
$$
Since $b_0'a_0$ factors through $j$, $ec_0$ factors through $q_0$. Hence $d_0c_0=gec_0$ factors through $q_0$, which implies that $b_0a_0$ factors through $j_U$. Thus we get $[\R](X_0,\Sigma U)=[\overline \R](X_0,\Sigma U)$.\\
For any morphism $w\in [\R](U'',\Sigma U)$ where $U''$ also lies in $\U$, since $[\R](X'',\Sigma U)=[\overline \R](X'',\Sigma U)$ where $X''\in \X$, we have the following commutative diagram
$$\xymatrix{
\Omega R'' \ar[r]^{f} \ar[d]^r &X'' \ar[r]^g \ar[d] &U'' \ar[d]^w \ar@{-->}[r] &\\
U \ar[r] &I_{U} \ar[r]^{j_U} &\Sigma U \ar@{-->}[r] &.
}
$$
Then there is a morphism $s:\Omega \R''\to X$ such that $r=gs$. But $f$ is a left $\X$-approximation, hence $s$ factors through $f$, then $r$ factors through $f$, which implies that $w$ factors through $j_U$. Thus $[\R](U'',\Sigma U)=[\overline \R](U'',\Sigma U)$.
\end{proof}



Now we can show the main theorem of this section.

\begin{thm}\label{main2}
Let $\X$ be a subcategory of $\h$ such that $\X\cap \R=\{R\in \R \text{ }|\text{ }\EE(R,\X)=0 \}$.  Then $\overline \X$ is a support $\tau$-tilting subcategory of $\overline \h$ if and only if $\X$ is a maximal $\R$-rigid subcategory of $\h$ such that every object in $\Omega \R$ admits a left $\X$-approximation.
\end{thm}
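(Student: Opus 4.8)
The statement is an equivalence, so the plan is to prove the two implications separately, and on the right-hand side to verify the three features—$\R$-rigidity, the left-approximation property, and maximality—one at a time, with maximality being the hard one.

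\textbf{``Maximal $\Rightarrow$ support $\tau$-tilting''.} Assume $\X$ is maximal $\R$-rigid and every object of $\Omega\R$ admits a left $\X$-approximation. Together with the hypothesis $\X\cap\R=\{R\in\R\mid\EE(R,\X)=0\}$, this is exactly the input of Proposition \ref{completion}, so $\overline\X\subseteq\mathbf{P}(\Fac\overline\X)=\add\overline\Y$, where (in the notation of that proof) $\Y=\add(\X\cup\U)$ and $\add\Y$ was shown there to be $\R$-rigid. Now I would invoke maximality of $\X$: since $\add(\X\cup\U)$ is $\R$-rigid and $\U\subseteq\h$, we get $\U\subseteq\X$, whence $\add\overline\Y=\overline\X$ and therefore $\overline\X=\mathbf{P}(\Fac\overline\X)$ is support $\tau$-tilting. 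This direction is short once Proposition \ref{completion} is available.

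\textbf{Converse, the two routine features.} Assume $\overline\X$ is support $\tau$-tilting, with pair $(\overline\W,\overline\s)$, $\W=\X_\R$, and set $\T:=\X\cap\R$. That $\X$ is $\R$-rigid is immediate from Theorem \ref{main1}. For the left-approximation property, fix $R\in\R$ and apply condition (b) of the pair to $\Omega R\in\overline{\Omega\R}$: it produces $W_1\in\W$ and $f_0\colon\Omega R\to W_1$ whose image $\overline{f_0}$ is a left $\overline\X$-approximation. Lifting $f_0$ to $\B$ and combining it with the inflation $\pi\colon\Omega R\to P$ of the $\EE$-triangle $\Omega R\to P\to R\dashrightarrow$, I claim $f=\binom{f_0}{\pi}\colon\Omega R\to W_1\oplus P$ is a left $\X$-approximation (here $W_1\oplus P\in\X$ since $\mathcal P\subseteq\T\subseteq\X$). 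Indeed, for $X\in\X$ and $g\colon\Omega R\to X$ the $\overline\X$-approximation property corrects $g$ by a map factoring through $\R$; applying $\Hom_{\B}(-,R'')$ to $\Omega R\to P\to R$ and using $\EE(R,R'')=0$ shows every map $\Omega R\to R''$ with $R''\in\R$ factors through $\pi$, and this forces $g$ to factor through $f$.

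\textbf{Converse, identifying $\overline\X$ with its completion.} Before maximality I would record that $\overline\X=\mathbf{P}(\Fac\overline\X)$. For $U\in\U$, its defining $\EE$-triangle $\Omega R\xrightarrow{f}X\to U\dashrightarrow$ (with $f$ a left $\X$-approximation) induces $\Omega R\xrightarrow{\overline f}\overline X\to\overline U\to0$ in $\oB$, with $\overline f$ a left $\overline\X$-approximation; condition (b) gives a second left $\overline\X$-approximation of $\Omega R$ whose cokernel $W_2$ lies in $\overline\X$. Since in a Krull--Schmidt Hom-finite category any two left approximations of $\Omega R$ differ only by a split (hence $\overline\X$-) summand, the cokernels $\overline U$ and $W_2$ differ only by summands in $\overline\X$; as $W_2\in\overline\X$ and $\overline\X$ is closed under summands, $\overline U\in\overline\X$. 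Thus $\U\subseteq\X$ and $\overline\X=\mathbf{P}(\Fac\overline\X)$.

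\textbf{Converse, maximality (the crux).} Let $\X^+:=\add(\X\cup\X')$ be $\R$-rigid; I must show $\X'\subseteq\X$, and may reduce to an indecomposable $Y\in\X'$. If $Y\in\R$, Proposition \ref{sum} gives $\EE(Y,\X)=0$, i.e. $Y\in\X\cap\R\subseteq\X$. So assume $Y\in\h_\R$. By Theorem \ref{main1} $\overline{\X^+}$ is $\tau$-rigid, so Lemma \ref{fac} yields $\Ext^1_{\oB}(\overline Y,\Fac\overline{\X^+})=0$, whence $\Ext^1_{\oB}(\overline Y,\Fac\overline\X)=0$ since $\Fac\overline\X\subseteq\Fac\overline{\X^+}$. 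The one remaining point, and the main obstacle, is to prove $\overline Y\in\Fac\overline\X$: granting it, $\overline Y$ is an $\Ext$-projective of the torsion class $\Fac\overline\X$, so $\overline Y\in\mathbf{P}(\Fac\overline\X)=\overline\X$ by the previous paragraph, and as $Y\in\h_\R$ is indecomposable this forces $Y\in\X$. To establish $\overline Y\in\Fac\overline\X$ I would argue torsion-theoretically in $\oB\simeq\mod\underline\R$: $\Fac\overline\X$ is a functorially finite torsion class (functorial finiteness from condition (b), $\Ext$-projectivity from Lemma \ref{fac}); writing the canonical sequence $0\to t\overline Y\to\overline Y\to G\to0$ with $G$ in the torsion-free class (so $\Hom_{\oB}(\overline\X,G)=0$), the compatibility of $\overline Y$ with the support $\tau$-tilting $\overline\X$—read off from the $\tau$-rigid presentation of $\overline Y$ and the above $\Ext$-vanishing—must force $G=0$. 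This final step is precisely the extriangulated analogue of the maximality of support $\tau$-tilting objects among $\tau$-rigid ones in Adachi--Iyama--Reiten and Iyama--J\o rgensen--Yang, and adapting that torsion-pair argument to $\oB$ is where the real work lies.
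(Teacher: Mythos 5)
Most of your proposal tracks the paper: the direction ``maximal $\Rightarrow$ support $\tau$-tilting'' via Proposition \ref{completion} plus maximality is exactly the paper's argument, and your construction of left $\X$-approximations of $\Omega\R$ from condition (b) of the pair is the same device used in the proof of Lemma \ref{rep}, which is what the paper cites. The problem is the maximality half of the converse, which you correctly identify as the crux and then do not prove. Your reduction leaves you needing $\overline Y\in\Fac\overline\X$ for an indecomposable $Y\in\h_\R$ with $\add(\X\cup\{Y\})$ relative rigid, and your argument for this is ``the compatibility of $\overline Y$ with the support $\tau$-tilting $\overline\X$ \dots must force $G=0$.'' That sentence is an appeal to the Adachi--Iyama--Reiten/Iyama--J{\o}rgensen--Yang principle that support $\tau$-tilting objects are maximal among $\tau$-rigid ones, but that principle is itself a theorem whose standard proof (in the module case) uses a count of indecomposable summands that is unavailable for subcategories, and no version of it for $\mod\underline\R\simeq\overline\h$ is established in this paper or quoted from \cite{IJY}. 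As written, the decisive implication $\Hom_{\oB}(\overline\X,G)=0\Rightarrow G=0$ is asserted, not derived from the $\tau$-rigid presentation of $\overline Y$ or from condition (b) of the pair, so the proof of maximality is incomplete.

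The paper avoids this entirely by a concrete diagram chase: starting from the $\EE$-triangle $Y\to R_1\to R_2$, it uses Lemma \ref{rep} to resolve $R_1$ by a right $\X$-approximation $X_0\xrightarrow{r}R_1$ with cocone $X_1\in\X$, forms the induced $\EE$-triangles $X_1\to U\to Y$ and $U\to X_0\xrightarrow{r'}R_2$, checks that $r'$ is again a right $\X$-approximation so that $U\in\X$ by Corollary \ref{repcor}, and finally uses the relative rigidity of $\add(\X\cup\{Y\})$ to split $X_1\to U\to Y$, exhibiting $Y$ as a direct summand of $U$. If you want to keep your torsion-theoretic route, you must actually prove $\overline Y\in\Fac\overline\X$ (or the maximality of support $\tau$-tilting subcategories among $\tau$-rigid ones in $\overline\h$); otherwise the paper's explicit construction is the way to close the gap.
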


\begin{proof}
Let $\X=\add(\W\cup \T)$ such that $\W\subseteq \h_\R$ and $\T=\X\cap \R$. We show the ``only if" part first.\\
Let $\overline \W$ be a support $\tau$-tilting subcategory in $\overline \h$. By the proof of Lemma \ref{rep}, every object in $\Omega \R$ admits a left $\X$-approximation. We show that $\X$ is maximal $\R$-rigid, it is enough to check that for any object $Y\in \h_\R$, if $\overline {[\R]}(Y,\Sigma \X)={[\R]}(Y,\Sigma \X)$ and $\overline {[\R]}( \X,\Sigma Y)={[\R]}( \X,\Sigma Y)$, then $Y\in \X$.\\
$Y$ admits an $\EE$-triangle $\xymatrix{Y \ar[r]^{r_1} &R_1 \ar[r]^{r_2} &R_2 \ar@{-->}[r] &}$ where $R_1,R_2\in \R$. By Lemma \ref{rep}, $R_1$ admits an $\EE$-triangle $\xymatrix{X_1\ar[r] &X_0\ar[r]^{r} &R_1\ar@{-->}[r] &}$ where $X_1,X_0\in \X$ and $r$ is a right $\X$-approximation. Then we have the following commutative diagram
$$\xymatrix{
X_1 \ar@{=}[r] \ar[d] &X_1 \ar[d]\\
U \ar[r] \ar[d] &X_0 \ar[r]^{r'} \ar[d]^r &R_2 \ar@{=}[d] \ar@{-->}[r] &\\
Y \ar[r]^{r_1} \ar@{-->}[d] &R_1 \ar[r]^{r_2} \ar@{-->}[d] &R_2 \ar@{-->}[r] &.\\
& &
}
$$
Let $f:X\to R_2$ be any morphism where $X\in \X$, we have the following commutative diagram
$$\xymatrix{
&X \ar@{.>}[dr]^-j \ar@{.>}[ddr] \ar[drrr]^-f\\
Y \ar[rr] \ar@{=}[d] &&R_1 \ar[d] \ar[rr]^-{r_2} &&R_2 \ar[d] \ar@{-->}[r] &\\
Y \ar[rr] &&I \ar[rr] && \Sigma Y \ar@{-->}[r] &
}
$$
where $I\in \mathcal I$. Since $r$ is a right $\X$-approximation, $j$ factors through $r$, hence $f$ factors through $r'$. This implies $r'$ is also a right $\X$-approximation. By Corollary \ref{repcor}, we have $U\in \X$. In the following commutative diagram
$$\xymatrix{
X_1 \ar@{=}[r] \ar[d] &X_1 \ar[d] \ar@{=}[r] \ar[d] &X_1 \ar[d]\\
U \ar[r] \ar[d] &X_0 \ar[r] \ar[d]^r &I' \ar[d]^{i'} \\
Y \ar[r]^{r_1} \ar@{-->}[d] &R_1 \ar[r]^{s} \ar@{-->}[d] &\Sigma X_1 \ar@{-->}[d]\\
& & &
}
$$
$sr_1$ factors through $i'$, hence the first column splits, which means $Y$ is a direct summand of $U\in \X$. Hence $Y\in \X$.\\
Now we show the ``if" part. Let $\X$ be a maximal $\R$-rigid subcategory of $\h$ such that every object in $\Omega \R$ admits a left $\X$-approximation. By Proposition \ref{completion}, $\overline \X$ is contained in a support $\tau$-tiling subcategory $\overline \Y$ such that $\Y\cap\R=\{R\in \R \text{ }|\text{ }\EE(R,\Y)=0 \}$. Then $\Y$ is a maximal $\R$-rigid subcategory which contains $\X$, we have $\X=\Y$. Hence $\overline \X$ is support $\tau$-tiling.


\end{proof}

We can get the following corollary immediately.

\begin{cor}\label{completion2}
If $\X$ is an $\R$-rigid subcategory of $\h$ such that every object in $\Omega \R$ admits a left $\X$-approximation, then $\X$ is contained in a maximal $\R$-rigid subcategory.
\end{cor}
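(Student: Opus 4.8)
The plan is to reduce the statement to Proposition \ref{completion}, whose only additional hypothesis over the present one is the compatibility condition $\X\cap\R=\{R\in\R\mid\EE(R,\X)=0\}$. So the first thing I would do is enlarge $\X$ to a subcategory that satisfies this condition while preserving both $\R$-rigidity and the existence of left $\X$-approximations for objects of $\Omega\R$. Concretely, I would set $\T=\{R\in\R\mid\EE(R,\X)=0\}$ and $\X'=\add(\X\cup\T)$, noting that $\mathcal P\subseteq\T$ because $\EE(P,-)=0$ for projective $P$. Since $\EE(\T,\X)=0$ holds by the very definition of $\T$, Proposition \ref{sum} gives at once that $\X'$ is again $\R$-rigid.

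Next I would verify the compatibility condition for $\X'$, namely $\X'\cap\R=\{R\in\R\mid\EE(R,\X')=0\}$. The right-hand side equals $\T$: one inclusion is clear since $\X\subseteq\X'$, and conversely $\EE(R,\X)=0$ forces $\EE(R,\X')=0$ because $\EE(R,\T)=0$ by rigidity of $\R$. For the left-hand side, the key point, and the only step that genuinely uses $\R$-rigidity, is that $\X\cap\R\subseteq\T$: if $R'\in\X\cap\R$ and $X_0\in\X$, then every morphism $\alpha\colon R'\to\Sigma X_0$ factors through $R'\in\R$ via the identity, hence lies in $[\R](R',\Sigma X_0)$; by $\R$-rigidity of $\X$ it then lies in $\overline{[\R]}(R',\Sigma X_0)$, so it factors through the injection $I_{X_0}\to\Sigma X_0$ and therefore dies under the connecting map $\Hom_\B(R',\Sigma X_0)\to\EE(R',X_0)$. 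As this map is surjective (because $\EE(R',I_{X_0})=0$), I get $\EE(R',X_0)=0$ for all $X_0\in\X$, that is $R'\in\T$. A Krull--Schmidt argument on the indecomposable summands of objects of $\add(\X\cup\T)$ then upgrades this to $\X'\cap\R=\T$, as required.

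I would then check that every object of $\Omega\R$ still admits a left $\X'$-approximation. Given $\Omega R$ with its defining $\EE$-triangle $\Omega R\xrightarrow{q}P\to R\dashrightarrow$ and a left $\X$-approximation $f\colon\Omega R\to X_f$, I claim that $\binom{f}{q}\colon\Omega R\to X_f\oplus P$ is a left $\X'$-approximation. Indeed $X_f\oplus P\in\X'$; any morphism into an $\X$-summand factors through $f$; and any morphism $\Omega R\to T$ with $T\in\T$ factors through $q$, since the sequence $\Hom_\B(P,T)\xrightarrow{q^{*}}\Hom_\B(\Omega R,T)\to\EE(R,T)=0$ is exact (again by rigidity of $\R$), so $q^{*}$ is surjective. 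Splitting an arbitrary map $\Omega R\to X'$ into its $\X$- and $\T$-components then finishes the claim.

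With all three hypotheses verified for $\X'$, Proposition \ref{completion} shows that $\overline{\X'}$ is contained in the support $\tau$-tilting subcategory $\overline\Y=\mathbf P(\Fac\overline{\X'})$, and the subcategory $\Y\subseteq\h$ produced in that proof visibly contains $\X'$. Since this $\Y$ also meets the standing condition $\Y\cap\R=\{R\in\R\mid\EE(R,\Y)=0\}$ by its construction, the ``only if'' direction of Theorem \ref{main2} applies and tells me that $\Y$ is a maximal $\R$-rigid subcategory of $\h$; as $\X\subseteq\X'\subseteq\Y$, this exhibits the desired maximal $\R$-rigid subcategory containing $\X$. I expect the whole difficulty to lie in the reduction step of the first three paragraphs: one must confirm that passing from $\X$ to $\X'=\add(\X\cup\T)$ simultaneously keeps $\R$-rigidity, installs the compatibility condition (where the inclusion $\X\cap\R\subseteq\T$ is the nontrivial point), and retains the left-approximation property. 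Once $\X'$ is brought inside the scope of Proposition \ref{completion} and Theorem \ref{main2}, the conclusion is purely formal.
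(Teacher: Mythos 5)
Your proof is correct, and at its core it follows the route the paper intends: feed the data into Proposition \ref{completion} to land inside the support $\tau$-tilting subcategory $\mathbf{P}(\Fac\overline{\X})$, and then invoke the ``only if'' direction of Theorem \ref{main2} to recognize its preimage $\Y$ as maximal $\R$-rigid. The genuine difference is that the paper derives the corollary ``immediately,'' silently ignoring that Proposition \ref{completion} and Theorem \ref{main2} both carry the standing hypothesis $\X\cap\R=\{R\in\R\mid\EE(R,\X)=0\}$, which is absent from the corollary's statement. Your first three paragraphs supply exactly the missing reduction: replacing $\X$ by $\X'=\add(\X\cup\T)$ with $\T=\{R\in\R\mid\EE(R,\X)=0\}$, checking via Proposition \ref{sum} that $\R$-rigidity survives, proving the key inclusion $\X\cap\R\subseteq\T$ from the definition of $\R$-rigidity together with the surjectivity of the connecting map $\Hom_\B(R',\Sigma X_0)\to\EE(R',X_0)$, and observing that $\svecv{f}{q}\colon\Omega R\to X_f\oplus P$ remains a left $\X'$-approximation because $\EE(\R,\T)=0$ forces every map $\Omega R\to\T$ to factor through $q$. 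All of these verifications are sound (the Krull--Schmidt step for $\X'\cap\R=\T$ included), so what you buy over the paper's one-line derivation is a proof of the corollary as literally stated, rather than under the tacitly assumed compatibility condition; the cost is only the routine bookkeeping of the enlargement.
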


Compare with \cite[Theorem 4.4]{ZZ3}, we have the following interesting observation.

According to the proof of Theorem \ref{main2} and Lemma \ref{rep}, we have the following proposition.

\begin{prop}\label{maximal}
Let $\R$ be a contravariantly finite rigid subcategory such that $\mathcal P\subsetneq \R$. Then $\X\subseteq \h$ is maximal $\R$-rigid if and only if the following conditions are satisfied:
\begin{itemize}
\item[(a)] $\R\subseteq \Cone(\X,\X)$.
\item[(b)] $\X=\{M\in \h \text{ }|\text{ } [\R](M,\Sigma \X)=[\overline \R](M,\Sigma \X) \text{ and } [\R](\X,\Sigma M)=[\overline \R](\X,\Sigma M)  \}$
\end{itemize}
\end{prop}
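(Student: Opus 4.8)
The plan is to prove the two implications separately, recycling the diagram chases of Lemma \ref{rep} and Theorem \ref{main2}. Write $S$ for the subcategory appearing on the right of (b). For the implication ``(a),(b)$\Rightarrow$ maximal'' I would argue from (b) alone. Each $X\in\X=S$ satisfies $[\R](X,\Sigma\X)=[\overline\R](X,\Sigma\X)$; letting $X$ range over $\X$ this is precisely $\overline{[\R]}(\X,\Sigma\X)=[\R](\X,\Sigma\X)$, so $\X$ is $\R$-rigid. If now $\add(\X\cup\X')$ is $\R$-rigid for some $\X'\subseteq\h$, then every $M\in\X'$ satisfies $[\R](M,\Sigma\X)=[\overline\R](M,\Sigma\X)$ and $[\R](\X,\Sigma M)=[\overline\R](\X,\Sigma M)$ as special cases of that rigidity, hence $M\in S=\X$; thus $\X'\subseteq\X$ and $\X$ is maximal. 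Condition (a) is not needed for this direction; it is recorded because, as shown below, it is forced by maximality.

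For the converse, let $\X$ be maximal $\R$-rigid. First I would extract from maximality the two standing hypotheses of Theorem \ref{main2}. Applying Proposition \ref{sum} with $\R_0=\add\{R\}$: if $R\in\X\cap\R$ then $\add(\X\cup\{R\})=\X$ is $\R$-rigid, forcing $\EE(R,\X)=0$; conversely $\EE(R,\X)=0$ makes $\add(\X\cup\{R\})$ $\R$-rigid, so $R\in\X$ by maximality. Hence $\X\cap\R=\{R\in\R\mid\EE(R,\X)=0\}$, and since $\EE(\mathcal P,-)=0$ the same reasoning yields $\mathcal P\subseteq\X$. The inclusion $\X\subseteq S$ of (b) is then immediate from $\R$-rigidity.

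For $S\subseteq\X$ I would transplant the ``only if'' argument of Theorem \ref{main2}. Given $M\in S$, decompose $M=M_\R\oplus M'$ with $M_\R\in\R$ and $M'\in\h_\R$, using that $\h$ is closed under summands. The first condition of $S$ applied to $M_\R$ reads $[\R](M_\R,\Sigma\X)=[\overline\R](M_\R,\Sigma\X)$; since every morphism out of $M_\R\in\R$ already lies in $[\R]$, this says every map $M_\R\to\Sigma X$ factors through the injective copresentation of $X$, i.e. $\EE(M_\R,\X)=0$, so $M_\R\in\X$ by the previous paragraph. For $M'$ I would run the chase verbatim: take an $\EE$-triangle $M'\to R_1\to R_2$, replace $R_1$ by a triangle $X_1\to X_0\to R_1$ whose right-hand map is a right $\X$-approximation (the strengthened form of (a) discussed below), use $[\R](\X,\Sigma M')=[\overline\R](\X,\Sigma M')$ to see the induced map to $R_2$ is again a right $\X$-approximation, invoke Corollary \ref{repcor} to place the comparison object $U$ in $\X$, and use $[\R](M',\Sigma\X)=[\overline\R](M',\Sigma\X)$ to split the remaining column, exhibiting $M'$ as a summand of $U\in\X$. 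This yields $S\subseteq\X$, hence (b).

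The crux is therefore to establish condition (a) in the strengthened form that each $R\in\R$ admits an $\EE$-triangle $X_1\to X_0\to R$ with $X_0,X_1\in\X$ whose deflation is a right $\X$-approximation, since the previous paragraph rests on it (and (a) itself follows a fortiori). In Lemma \ref{rep} such triangles are produced by pushing out $\Omega R\to P$ (with $P\in\mathcal P\subseteq\X$) along a left $\X$-approximation of $\Omega R$, whose existence is exactly the support $\tau$-tilting (approximation) condition. The obstacle is thus to manufacture, for every $R\in\R$, a left $\X$-approximation of $\Omega R$ from maximality alone. Here I would rely on the contravariant finiteness of $\R$ and on the Bongartz-type completion of Proposition \ref{completion} and Corollary \ref{completion2}: any $\R$-rigid subcategory enjoying these approximations completes to a maximal one, so once a single approximation is in hand maximality forces the completion to coincide with $\X$, whence $\overline\X$ is support $\tau$-tilting and Lemma \ref{rep} returns the strengthened (a). Producing that first left $\X$-approximation directly --- from maximality together with the contravariant finiteness of $\R$, and without already assuming support $\tau$-tilting --- is where I expect the genuine difficulty, and one must keep every step valid for an arbitrary contravariantly finite rigid $\R$ with $\mathcal P\subsetneq\R$.
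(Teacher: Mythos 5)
Your ``(a),(b) $\Rightarrow$ maximal'' direction is correct and genuinely different from the paper's: the paper uses (a) to manufacture, for each $R\in\R$, a left $\X$-approximation $\Omega R\to X_1\oplus P$ out of a triangle $X_1\to X_0\to R$, concludes that $\overline \X$ is support $\tau$-tilting, and only then deduces maximality via Theorem \ref{main2}. You instead observe that condition (b) alone already encodes maximality: $\X\subseteq S$ gives $\R$-rigidity, and if $\add(\X\cup\X')$ is $\R$-rigid then every $M\in\X'\subseteq\h$ satisfies the two defining conditions of $S$, hence lies in $S=\X$. That shortcut is valid and bypasses the approximation machinery entirely; it also makes transparent that (a) plays no role in this implication.

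The gap is in the converse, and you have located it yourself but not closed it: for a merely maximal $\R$-rigid $\X$ you never produce the left $\X$-approximation of $\Omega R$ (equivalently, the triangle $X_1\to X_0\to R$ whose deflation is a right $\X$-approximation) on which both condition (a) and the inclusion $S\subseteq\X$ rest. Your fallback to Proposition \ref{completion} and Corollary \ref{completion2} is circular, since those results take the existence of left $\X$-approximations of objects of $\Omega\R$ as a hypothesis. So the forward implication is not proved. It is worth saying that the paper's own proof of this direction is the single sentence ``according to the proof of Theorem \ref{main2} and Lemma \ref{rep}'', and both of those arguments start from $\overline \X$ being support $\tau$-tilting --- which, by Theorem \ref{main2} itself, is equivalent to maximality \emph{together with} the approximation condition. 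The step you flag as the genuine difficulty is therefore precisely the step the paper leaves implicit; your proposal identifies it correctly but does not supply it, so as written the ``only if'' half remains unproved.
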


\begin{proof}
According to the proof of Theorem \ref{main2} and Lemma \ref{rep}, any maximal $\R$-rigid subcategory $\X\subseteq \h$ satisfies (a) and (b).\\
We show that if $\X\subseteq \h$ satisfies conditions (a) and (b), then $\X$ is maximal $\R$-rigid.\\
From (b) we know that $\X$ is $\R$-rigid such that $\X\cap\R=\{R\in \R \text{ }|\text{ }\EE(R,\X)=0 \}$. Since $\R\subseteq \Cone(\X,\X)$, any object $R\in \R$ admits a commutative diagram
$$\xymatrix{
\Omega R \ar[r] \ar[d]^f &P \ar[r] \ar[d] &R \ar@{=}[d] \ar@{-->}[r] &\\
X_1 \ar[r] &X_0 \ar[r] &R \ar@{-->}[r] &
}
$$
which induces an $\EE$-triangle $\xymatrix{\Omega R \ar[r]^{\alpha\quad} &X_1\oplus P \ar[r] &X_0 \ar@{-->}[r] &}$ where $\overline \alpha=\overline f$. Let $r:\Omega R\to X$ be any morphism where $X\in \X$. Then we have the following commutative diagram
$$\xymatrix{
\Omega R \ar[r]^{\alpha\quad} \ar@{=}[d] &X_1\oplus P \ar[d] \ar[r] &X_0 \ar@{-->}[r] \ar[d]^a &\\
\Omega R \ar[r] \ar[d]^r &P \ar[r] \ar[d] &R \ar[d]^b \ar@{-->}[r] &\\
X \ar[r] &I \ar[r]^i &\Sigma X \ar@{-->}[r] &
}$$
where $I\in \mathcal I$. Since $\X$ is $\R$-rigid, we have $ba$ factors through $i$. Hence $r$ factors through $\alpha$. This means any object $\Omega R$ admits an exact sequence $\Omega R \xrightarrow{\overline f} X_1 \to X_0\to 0$ where $\overline f$ is a left $\overline \X$-approximation. By the definition, $\overline \X$ is support $\tau$-tilting. Thus by Theorem \ref{main2}, $\X$ is maximal $\R$-rigid.
\end{proof}

\subsection{Support $\tau$-tilting subcategories vs support tilting subcategories}

Support tilting subcategories were introduced by Holm and J{\o}rgensen, which can be regarded as a generalization of support tilting modules.
\begin{defn}\cite[Definition 2.1]{HJ}
To say that $\M$ is a support tilting subcategory of an abelian category $\A$ means that $\M$ is a full subcategory which
\begin{itemize}
\setlength{\itemsep}{2.5pt}
\item is closed under direct sums and direct summands;

\item is functorially finite in $\A$;

\item satisfies $\Ext^2_{\A}(\M,-)=0$;

\item satisfies $\Ext^1_{\A}(\M,\M)=0$;

\item satisfies that if $A$ is a subquotient of an object from $\M$ such that
$\Ext^1_{\A}(\M,A)=0$, then $B$ is a quotient of an object from $\M$.
\end{itemize}
\end{defn}

We have the following theorem.

\begin{thm}\label{main2.5}
In the abelian quotient category $\overline \h$,
\begin{itemize}
\item[(a)] any support tilting subcategory $\overline \X$ is support $\tau$-titling;
\item[(b)] any functorially finite support $\tau$-tilting subcategory $\overline \X$ which satisfies $\Ext^2_{\overline \h}(\overline \X,-)=0$ is support tilting.
\end{itemize}
\end{thm}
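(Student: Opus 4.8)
The plan is to verify, for each part, the defining axioms, leaning throughout on the identification $\overline \X=\mathbf P(\Fac \overline \X)$ that becomes available once $\overline \X$ is known to be $\tau$-rigid, together with Lemma \ref{fac}, Proposition \ref{completion} and Theorem \ref{main2}. For (a), I would first record that $\tau$-rigidity of $\overline \X$ is equivalent to $\Ext^1_{\overline \h}(\overline \X,\overline \X)=0$: since $\overline \h$ has enough projectives $\underline{\Omega \R}$ by Proposition \ref{b1}(b), every $X\in \overline \X$ has a projective presentation $\Omega R_0\xrightarrow{\overline f}\Omega R_1\to X\to 0$ with terms in $\underline{\Omega\R}$, and applying $\Hom_{\overline \h}(-,X')$ shows that $\Hom_{\overline \h}(\overline f,X')$ is surjective exactly when $\Ext^1_{\overline \h}(X,X')=0$. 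As support tilting gives $\Ext^1_{\overline \h}(\overline \X,\overline \X)=0$, the subcategory $\overline \X$ is $\tau$-rigid, hence $\X$ is $\R$-rigid by Theorem \ref{main1}; functorial finiteness supplies the left $\X$-approximations of the objects of $\Omega\R$ needed to apply Proposition \ref{completion}, which then exhibits the support $\tau$-tilting subcategory $\mathbf P(\Fac \overline \X)\supseteq \overline \X$. It thus remains to prove $\mathbf P(\Fac \overline \X)\subseteq \overline \X$.

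For this reverse inclusion I would take an indecomposable $Y\in \mathbf P(\Fac \overline \X)$ and a right $\overline \X$-approximation $X_1\to Y$; because $Y\in \Fac \overline \X$ this approximation is an epimorphism, giving $0\to K\to X_1\to Y\to 0$. Applying $\Hom_{\overline \h}(\overline \X,-)$ and using the defining surjectivity of a right approximation together with $\Ext^1_{\overline \h}(\overline \X,X_1)=0$ yields $\Ext^1_{\overline \h}(\overline \X,K)=0$; since $K$ is a subobject of $X_1\in \overline \X$, the quotient-closure axiom of support tilting forces $K\in \Fac \overline \X$. Then $\Ext^1_{\overline \h}(Y,K)=0$ (as $Y\in \mathbf P(\Fac \overline \X)$ and $K\in \Fac \overline \X$), so the sequence splits and $Y$ is a summand of $X_1\in \overline \X$. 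Hence $\overline \X=\mathbf P(\Fac \overline \X)$ is support $\tau$-tilting.

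For (b), closure under sums and summands holds by convention, functorial finiteness and $\Ext^2_{\overline \h}(\overline \X,-)=0$ are hypotheses, and $\Ext^1_{\overline \h}(\overline \X,\overline \X)=0$ follows from $\tau$-rigidity (Lemma \ref{fac}). The only substantial axiom is quotient closure: if $A$ is a subquotient of some $X_0\in \overline \X$ with $\Ext^1_{\overline \h}(\overline \X,A)=0$, then $A\in \Fac \overline \X$. Writing $\mathcal{T}=\Fac \overline \X$, which is a functorially finite torsion class by Lemma \ref{fac}, I would pass to the torsion decomposition $0\to tA\to A\to N\to 0$; applying $\Hom_{\overline \h}(\overline \X,-)$ and using $\Ext^1_{\overline \h}(\overline \X,tA)=0$ (Lemma \ref{fac}) and $\Ext^2_{\overline \h}(\overline \X,-)=0$ gives $\Ext^1_{\overline \h}(\overline \X,N)=0$, while $N$ is a torsion-free subquotient of $\overline \X$ with $\Hom_{\overline \h}(\overline \X,N)=0$. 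Everything thus reduces to showing such an $N$ vanishes, whence $A=tA\in \mathcal{T}=\Fac \overline \X$.

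To kill $N$ I would use that $\overline \h$ has enough projectives $\underline{\Omega\R}$, so it suffices to prove $\Hom_{\overline \h}(\Omega R,N)=0$ for every $R\in \R$. Here the support $\tau$-tilting structure enters: by the defining coresolution condition each projective $\Omega R$ admits $\Omega R\xrightarrow{g}W_1\to W_2\to 0$ with $W_1,W_2\in \overline \X$ and $g$ a left $\overline \X$-approximation, and since $N$ is torsion-free and perpendicular to $\overline \X$ in degrees $0$ and $1$, chasing this sequence (through $\Im g$, using $\Ext^2_{\overline \h}(\overline \X,-)=0$) identifies $\Hom_{\overline \h}(\Omega R,N)$ with $\Hom_{\overline \h}(\ker g,N)$. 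The main obstacle is precisely this last vanishing: one must show that $\ker g$, the $\overline \X$-reject of the projective $\Omega R$, maps trivially to the torsion-free $N$, i.e. is itself controlled by the support data $\overline{\s}$. I expect to clear it by combining Theorem \ref{main2} and Proposition \ref{maximal}, whose condition $\R\subseteq \Cone(\X,\X)$ forces the projectives to be genuinely built from $\overline \X$, with the hypothesis $\Ext^2_{\overline \h}(\overline \X,-)=0$; once $N=0$ is established, quotient closure follows and the verification of support tilting is complete.
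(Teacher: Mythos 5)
Part (a) of your proposal is correct and follows essentially the same route as the paper: support tilting gives $\tau$-rigidity, Proposition \ref{completion} produces the support $\tau$-tilting subcategory $\mathbf{P}(\Fac \overline \X)\supseteq \overline \X$, and the splitting argument with a right $\overline \X$-approximation gives the reverse inclusion. One caveat: your blanket claim that $\tau$-rigidity of $\overline \X$ is \emph{equivalent} to $\Ext^1_{\overline \h}(\overline \X,\overline \X)=0$ is not true for an arbitrary projective presentation (surjectivity of $\Hom_{\overline\h}(\overline f,X')$ is in general strictly stronger); it works here only because $\Ext^2_{\overline \h}(\overline \X,-)=0$ forces $\pd_{\overline\h}\overline\X\le 1$, so the presentation may be chosen to be a resolution. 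State that explicitly.

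Part (b) has a genuine gap at the decisive step, which you acknowledge but do not close, and the proposed way of closing it cannot work as stated. After the torsion decomposition $0\to tA\to A\to N\to 0$ you reduce to proving $N=0$, and you then aim to show $\Hom_{\overline \h}(\Omega R,N)=0$ for every $R$ using only that $N$ is torsion-free with $\Hom_{\overline \h}(\overline \X,N)=\Ext^1_{\overline \h}(\overline \X,N)=0$. The hypothesis that $N$ is a subquotient of an object of $\overline \X$ has vanished from this final reduction, and neither Theorem \ref{main2} nor Proposition \ref{maximal} reintroduces it. Without it the target vanishing $\Hom_{\overline\h}(\ker g,N)=0$ is simply false: since (as the paper remarks) part (b) is a statement about an arbitrary abelian category with enough projectives, take $\Lambda=k(1\to 2)$ and the functorially finite support $\tau$-tilting pair $(S_1,P_2)$ (here $\gl\Lambda=1$, so $\Ext^2=0$). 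The module $N=P_1$ is torsion-free for $\Fac S_1$ and satisfies $\Hom(S_1,P_1)=\Ext^1(S_1,P_1)=0$, yet $\Hom(\Lambda,P_1)\neq 0$; concretely $\ker g\cong S_2^{\oplus 2}$ for the left $\add S_1$-approximation $g$ of $\Lambda$, and $\Hom(S_2,P_1)\neq 0$ because the socle of $P_1$ is $S_2$. So any correct argument must use the subquotient structure in an essential way, and your reduction to the torsion-free part does not actually make the problem easier. The paper handles this directly: given $X\twoheadrightarrow T\hookleftarrow Y$ with $\Ext^1_{\overline\h}(\overline\X,Y)=0$, it chooses $X\twoheadrightarrow T$ to be a right $\overline \X$-approximation, forms the pullback $U$ and common cokernel $Z$, checks $\Ext^1_{\overline\h}(\overline \X,U)=0$ and that $X\to Z$ is again a right $\overline \X$-approximation, and then chases a projective epimorphism $\Omega R\twoheadrightarrow Y$ through the left-approximation presentation $\Omega R\to X^1\to X^2\to 0$ to produce an epimorphism $X^1\twoheadrightarrow Y$. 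Some diagram chase of this kind, using both approximations and the subquotient data simultaneously, is unavoidable.
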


\begin{proof}
In this article, the subcategories we mentioned are always assumed to be closed under direct summands and sums.

(a) Let $\overline \X\subseteq \overline \h$ be support tilting. By definition we can get that $\overline \X$ is $\tau$-rigid. Hence by Lemma \ref{completion}, there is a support $\tau$-tilting subcategory $\overline \Y\subseteq \Fac \overline \X$ that contains $\overline \X$. Then any object $Y\in \overline \Y$ admits a short exact sequence $0\to V\to X_0\xrightarrow{\overline y} Y\to 0$ where $X_0\in \overline \X$. Since $\overline \X$ is functorially finite, we can assume that $\overline y$ is a right $\overline \X$-approximation. Hence $\Ext^1_{\oB}(\overline \X,V)=0$. Since $V$ is a subquotient of $X_0\in \overline \X$, we have $V\in \Fac \overline \X$. Since $\Ext^1_{\oB}(\overline \X,\Fac \overline \X)=0$ by Lemma \ref{fac}, the preceding short exact sequence splits, hence $Y\in\overline \X$. We have $\overline \X=\overline \Y$, which means $\overline \X$ support $\tau$-titling.

(b) Now let $\overline \X\subseteq \overline \h$ be a functorially finite support $\tau$-tilting subcategory which satisfies $\Ext^2_{\overline \h}(\overline \X,-)=0$. Any object $X\in \overline \X$ admits a short exact sequence $0\to \Omega R^1\xrightarrow{\overline r} \Omega R^2\to X\to 0$ where $\Hom_{\oB}(\overline r,X')$ is surjective for any $X'\in \overline \X$. Hence we have $\Ext^1_{\oB}(X,X')=0$, which implies that $\Ext^1_{\oB}(\overline \X,\overline \X)=0$. Let $0\neq Y\in \overline \h$ such that $\Ext^1_{\oB}(\overline \X, Y)=0$ and $Y$ is a subquotient of an object $ X\in \overline {\X}$. Then we have an epimorphism and a monomorphism $\xymatrix{X \ar@{->>}[r]^{\overline v} &T & Y \ar@{ >->}[l]_{\overline y}}$. Since $\overline {\X}$ is functorially finite, we can assume that $\overline v$ is a right $\overline {\X}$-approximation of $T$. Then we have the following commutative diagram of short exact sequences in $\overline \h$
$$\xymatrix{
&&0 \ar[d] &0 \ar[d]\\
0 \ar[r] & W \ar[r] \ar@{=}[d] & U \ar[r]^{\overline u} \ar[d]^{\overline u'} & Y \ar[r] \ar[d]^{\overline y} &0\\
0 \ar[r] & W \ar[r] & X \ar[r]^{\overline v} \ar[d]^{\overline z_1} & T \ar[r] \ar[d]^{\overline z_2} &0\\
&& Z \ar@{=}[r] \ar[d] & Z \ar[d]\\
&&0 &0
}
$$
where $\Ext^1_{\oB}(\overline \X, W)=0$. Since $\Ext^1_{\oB}(\overline \X, Y)=0$, we have $\Ext^1_{\oB}(\overline \X, U)=0$. Moreover, we have that $\overline z_1$ is a right $\overline \X$-approximation.
Since $\Omega R$ is projective, we have the following commutative diagram:
$$\xymatrix{
&\Omega R \ar[r]^{\overline r} \ar[d]^{\overline a} \ar[ddl]_{\overline b} &X^1\ar[r]^{\overline s} &X^2 \ar[r] &0\\
 &Y \ar[d]^{\overline y}\\
X \ar[r]^{\overline v} &T\ar[r] &0
}
$$
where $\overline r$ is a left $\X$-approximation and $\overline a$ is an epimorphism. Then there is a morphism $\overline c:X^1 \to X$ such that $\overline b=\overline {cr}$. We have $\overline {z_2vcr}=\overline {z_2ya}=0$, hence there is a morphism $\overline s':X^2\to Z$ such that $\overline {z_2vc}=\overline {s's}$. Since $\overline z_1$ is a right $\overline \X$-approximation, there is a morphism $\overline c':X^2\to X$ such that $\overline s'=\overline {z_1c'}$. Hence $0=\overline {z_2vc}-\overline {z_1c's}=\overline z_1(\overline {c}-\overline {c's})=0$. Thus there is a morphism $\overline s'':X^1\to U$ such that $\overline {c}-\overline {c's}=\overline {u's''}$. Then $\overline {yus''r}=\overline{vu's''r}=\overline {vcr}-\overline {vc'sr}=\overline {vb}=\overline {ya}$. Hence $\overline a=\overline {us''r}$, and $\overline {us''}:X^1\to Y$ is an epimorphism, which means $Y\in \Fac \overline \X$.
\end{proof}

\begin{rem}
Theorem \ref{main2.5}(b) actually holds on any abelian category with enough projectives.
\end{rem}


\section{Support $\tau$-tilting subcategories and cluster tilting subcategories}

In this section, we assume that $\R$ is a cluster tilting subcategory of $\B$. An subcategory $\oB'$ of $\oB$ is called hereditary if for any object $A\in \oB'$, $\Ext^2_{\oB}(A,-)=0$.
\vspace{1mm}

\begin{defn}
A subcategory $\overline \T\subseteq \oB$ is called a tilting subcategory if it satisfies the following conditions:
\begin{itemize}
\item[(a)] $\overline \T$ is hereditary.
\item[(b)] $\Ext^1_{\oB}(\overline\T,\overline\T)=0$.
\item[(c)] Any object $\Omega R\in \overline {\Omega \R}$ admits a short exact sequence $0\to \Omega R\to T_1\to T_2\to 0$ where $T_1,T_2\in \overline \T$.
\end{itemize}
\end{defn}

$\B$ is called Frobenius if $\mathcal P=\mathcal I$. We will show the following theorem.

\begin{thm}\label{main4}
Let $\B$ be a Frobenius extriangulated category and $\R$ be a cluster tilting subcategory of $\B$.
If $\oB$ is hereditary, then the image $\overline\M$ of any cluster tilting subcategory $\M$ such that  $\M\cap \R=\mathcal P$ is a tilting subcategory. 
\end{thm}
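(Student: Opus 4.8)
The plan is to verify the three defining conditions of a tilting subcategory for $\overline{\M}$, given that $\M$ is cluster tilting with $\M \cap \R = \mathcal P$ and that $\oB$ is hereditary. First I would observe that since $\oB$ itself is hereditary, condition (a) is immediate: every object of $\overline{\M}$ automatically satisfies $\Ext^2_{\oB}(A,-)=0$, so $\overline{\M}$ is hereditary for free. This reduces the problem to establishing conditions (b) and (c).

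For condition (b), the vanishing $\Ext^1_{\oB}(\overline{\M},\overline{\M})=0$, my approach is to connect extensions in the abelian quotient $\oB$ back to the extriangulated structure of $\B$. Since $\M$ is cluster tilting, it is in particular rigid, hence $\R$-rigid, so by Theorem~\ref{main1} the image $\overline{\M}$ is $\tau$-rigid in $\oB = \overline{\h}$ (note $\h = \B$ here because $\R$ is cluster tilting). The key input I would use is that a $\tau$-rigid subcategory coming from a genuine rigid subcategory should have vanishing self-extensions in the quotient. Concretely, each $M \in \overline{\M}$ admits a presentation $\Omega R^1 \to \Omega R^2 \to M \to 0$, and one feeds a self-extension into the long exact sequence obtained by applying $\Hom_{\oB}(-, M')$; the surjectivity built into the $\tau$-rigid definition (as exploited in the proof of Theorem~\ref{main2.5}(b), where exactly this argument yields $\Ext^1_{\oB}(\overline{\X},\overline{\X})=0$) forces the $\Ext^1$ term to vanish. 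I would cite or replay that computation here.

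For condition (c), I would use the cluster tilting property of $\M$ to resolve each $\Omega R$. Since $\M$ is cluster tilting in the Frobenius category $\B$, we have $\Cone(\M,\M)=\CoCone(\M,\M)=\B$, so in particular every object $\Omega R \in \Omega\R$ sits in an $\EE$-triangle $\Omega R \to M_1 \to M_2 \dashrightarrow$ with $M_1, M_2 \in \M$; applying the functor $H$ of Proposition~\ref{b1}(c) turns this into an exact sequence $\Omega R \to \overline{M_1} \to \overline{M_2} \to 0$ in $\oB$. To upgrade this to the required short exact sequence $0 \to \Omega R \to T_1 \to T_2 \to 0$, I would check that the first map is monic in $\oB$, which should follow because $\Omega R$ is projective in $\oB$ (Proposition~\ref{b1}(b)) together with the Frobenius hypothesis $\mathcal P = \mathcal I$ and the assumption $\M \cap \R = \mathcal P$ ensuring no unwanted kernel; replacing $M_1, M_2$ by their images in $\oB$ handles direct summands lying in $\R$, which die in the quotient.

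The main obstacle I anticipate is condition (c), specifically ensuring exactness on the left (injectivity of $\Omega R \to T_1$ in $\oB$) and controlling how the summands of $M_1, M_2$ that lie in $\R$ behave under the quotient by $[\R]$. The hypothesis $\M \cap \R = \mathcal P$ is surely what makes this work: it guarantees that the nonprojective part of $\M$ maps faithfully into $\oB$ and that objects of $\M$ become genuine nonzero objects of $\overline{\M}$ rather than collapsing. I would need to argue carefully that the Frobenius condition, by making projectives and injectives coincide, lets me use the hereditary hypothesis to split off any obstruction to left-exactness. The interplay between the Frobenius structure, the cluster tilting resolution, and the quotient functor $H$ is where the real work lies, whereas (a) and (b) should be comparatively routine once the framework of Theorem~\ref{main1} and Theorem~\ref{main2.5} is invoked.
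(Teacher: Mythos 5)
Your conditions (a) and (b) are essentially fine: (a) is indeed immediate from the hypothesis that $\oB$ is hereditary, and for (b) your detour through $\tau$-rigidity (Theorem \ref{main1}) plus the presentation argument of Theorem \ref{main2.5}(b) does work once the hereditary hypothesis upgrades the $\tau$-rigid presentation to a short exact sequence; the paper gets (b) in one line from Lemma \ref{imp}, which says directly that $\EE(X,Y)=0$ forces $\Ext^1_{\oB}(X,Y)=0$ for $X,Y\in\B_\R$. Either route is acceptable.

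The genuine gap is in condition (c), which you yourself flag as ``where the real work lies'' but then do not carry out. There are two problems. First, applying the functor $H$ of Proposition \ref{b1}(c) to an arbitrary $\EE$-triangle $\Omega R\to M_1\to M_2\dashrightarrow$ coming from $\CoCone(\M,\M)=\B$ only gives exactness at $H(M_1)$, not surjectivity onto $H(M_2)$; to get the right-exact sequence one must choose the triangle compatibly with $\Omega R\to P\to R$ (a left $\M$-approximation lifting the inflation into $P$), as the paper does, so that the induced triangle $M_1\to M_2\oplus P\to R\dashrightarrow$ forces $\overline{g}$ to be epic. Second, and more seriously, the reasons you give for monicity of $\Omega R\to M_1$ in $\oB$ --- projectivity of $\Omega R$ in $\oB$, the Frobenius hypothesis, and $\M\cap\R=\mathcal P$ ``ensuring no unwanted kernel'' --- are not the operative mechanism. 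Projectivity of the source says nothing about a map out of it being monic, and the Frobenius hypothesis enters for a different reason entirely (without it, $\Omega I$ for $I$ injective non-projective is a projective of $\oB$ with no map to $\overline{\M}$, so (c) fails outright). What actually yields monicity is the hereditary hypothesis: since $\pd_{\oB}(M_2)\le 1$ and $\add M_2\cap\R\subseteq\mathcal P$, Lemma \ref{imp2} shows that the syzygy map $\Omega M_2\to\Omega R^1$ factors through $\R$, and Lemma \ref{ses} then converts the $\EE$-triangle into a genuine short exact sequence $0\to\Omega R\to M_1\to M_2\to 0$ because the connecting morphism dies in $\oB$. Without this step (or an equivalent computation of the connecting morphism) the proof of (c) is incomplete.
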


\begin{rem}
We only consider Frobenius extriangulated category in this theorem.  Since when  $\M$ is cluster tilting, it has to contain all the projectives and injectives, if there is a non-projective injective object $I$, $\Omega I$ is a project object in $\oB$ such that $\Hom_{\oB}(\Omega I,\overline \M)=0$, which means in such case $\overline \M$ can never be a tilting subcategory.  
\end{rem}

In the rest of this section, let $\B$ be a Frobenius extriangulated category. To show this theorem, we need several lemmas.



\begin{lem}\label{imp}
Let $X,Y\in \B_\R$. Then $\EE(X,Y)=0$ implies $\Ext^1_{\oB}(X,Y)=0$.
\end{lem}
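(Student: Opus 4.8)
The plan is to prove $\Ext^1_{\oB}(X,Y)=0$ by unwinding what $\Ext^1$ means in the abelian quotient $\oB$ and relating it back to the vanishing of $\EE(X,Y)$ in $\B$. Recall from Proposition \ref{b1} that $\oB\simeq\mod\underline{\R}$ is abelian with enough projectives, and the projectives are exactly the objects of $\underline{\Omega\R}$. So a class in $\Ext^1_{\oB}(X,Y)$ should be computable from a projective presentation of $X$ in $\oB$, i.e. from an exact sequence $\Omega R_0\xrightarrow{\overline f}\Omega R_1\to X\to 0$ of the kind produced by the diagram $(\maltese)$ in Section 3.

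First I would fix the $\EE$-triangle $X\xrightarrow{h}R_0\to R_1\dashrightarrow$ coming from $X\in\h=\B$ (since $\R$ is cluster tilting, $\h=\B$ and every object sits in such a triangle), together with the $\EE$-triangle $\Omega R_1\xrightarrow{b}P\to R_1\dashrightarrow$, and assemble the commutative diagram $(\maltese)$ giving the projective presentation $\Omega R_0\xrightarrow{\overline f}\Omega R_1\xrightarrow{\overline g}X\to 0$ in $\oB$. The key computational fact is that $\Ext^1_{\oB}(X,Y)$ is the cohomology of the complex obtained by applying $\Hom_{\oB}(-,Y)$ to the projective resolution; concretely a class vanishes iff every map $\Omega R_0\to Y$ that is ``closed'' factors appropriately through $\overline f$. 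So the crux is to show: for the relevant map out of $\Omega R_0$, the vanishing of $\EE(X,Y)$ forces the needed lift to $\Omega R_1$. I expect the cleanest route is to use the natural isomorphism $\Hom_{\uB}(\Omega R,-)\simeq\EE(R,-)$ (as invoked in the Remark after the support $\tau$-tilting definition) to translate $\EE(X,Y)=0$ into a surjectivity/factorization statement in $\underline{\B}$, and then push that through the functor $H:\B\to\oB$ of Proposition \ref{b1}, which sends $\EE$-triangles to exact sequences and kills $\R$.

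The main technical step, and the part I expect to be the obstacle, is correctly identifying $\Ext^1_{\oB}(X,Y)$ with an extension group that is controlled by $\EE(X,Y)$. In $\oB$ an extension $0\to Y\to E\to X\to 0$ need not lift to an honest $\EE$-triangle in $\B$; rather one must start from the presentation and argue that a cocycle $\Omega R_0\to Y$ representing a class in $\Ext^1_{\oB}(X,Y)$ extends over $\overline f$ to a map $\Omega R_1\to Y$. Using Remark \ref{useful}(a) on the diagram $(\maltese)$, a map $\Omega R_0\to Y$ extends over $f$ precisely when a companion map $R_0\to \Sigma Y$ (equivalently an element of $\EE(\cdot,Y)$) factors through the relevant deflation; since $Y\in\B_\R$ and $\EE(X,Y)=0$, the obstruction class living in $\EE(X,Y)$ vanishes, yielding the factorization. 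I would then verify that the factoring map is well-defined modulo $[\R]$, so that it survives in $\oB$, concluding $\Ext^1_{\oB}(X,Y)=0$. The delicate bookkeeping is ensuring the passage between $\EE$ in $\B$ and $\Ext^1$ in $\oB$ respects the quotient by $[\R]$ and by $[\mathcal P]$ consistently, which is exactly where the cluster tilting hypothesis on $\R$ (giving $\h=\B$ and the enough-projectives structure of $\oB$) does the work.
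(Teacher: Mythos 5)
Your plan is essentially correct, but it follows a genuinely different route from the paper. You compute $\Ext^1_{\oB}(X,Y)$ homologically from the $\overline{\Omega\R}$-projective presentation $\Omega R_0\xrightarrow{\overline f}\Omega R_1\to X\to 0$ coming from $(\maltese)$, and the step you flag as the delicate one is in fact immediate: the left column of $(\maltese)$ is an $\EE$-triangle $\Omega R_0\xrightarrow{f}\Omega R_1\xrightarrow{g}X\dashrightarrow$, so applying $\Hom_\B(-,Y)$ gives the exact sequence $\Hom_\B(\Omega R_1,Y)\to\Hom_\B(\Omega R_0,Y)\to\EE(X,Y)=0$; hence \emph{every} morphism $\Omega R_0\to Y$ factors through $f$ in $\B$, a fortiori $\Hom_{\oB}(\overline f,Y)$ is surjective, and since the image of $\overline f$ is the first syzygy of $X$ this kills $\Ext^1_{\oB}(X,Y)$. (Your detour through $\Hom_{\uB}(\Omega R,-)\simeq\EE(R,-)$ is unnecessary and slightly off target, since that isomorphism concerns objects of $\R$ rather than $X$; the long exact sequence above is all you need, and no bookkeeping issue with $[\R]$ or $[\mathcal P]$ arises.) The paper instead takes exactly the route you set aside: it starts from an arbitrary short exact sequence $0\to Y\to Z\xrightarrow{\overline f}X\to 0$ in $\oB$ and shows it \emph{does} lift to an honest $\EE$-triangle $Y\oplus Y_0\to Z\oplus R_0\to X\dashrightarrow$ after adding summands from $\R$ (using the conflation $R_1\to R_0\to X$ available because $\R$ is cluster tilting), then splits this triangle directly from $\EE(X,Y)=0$ and descends the splitting to $\oB$. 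The paper's argument buys an explicit description of extensions in $\oB$ as degenerate $\EE$-triangles, which is reused in spirit in Lemma \ref{ses}; your argument is more mechanical and avoids the Krull--Schmidt analysis of $Y'$, at the cost of invoking the identification of $\Ext^1_{\oB}$ via projective resolutions over $\underline{\Omega\R}$.
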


\begin{proof}
We can assume that $X,Y$ are indecomposable. Let
$$(\blacklozenge) \quad 0\to Y\to Z\xrightarrow{\overline f} X\to 0 \text{ }$$
 be a short exact sequence in $\oB$. The morphism $f$ admits the following commutative diagram
$$\xymatrix{
R_1 \ar[r] \ar@{=}[d] &Y' \ar[r]^{g'} \ar[d]^{y} &Z \ar[d]^{f} \ar@{-->}[r] &\\
R_1 \ar[r] &R_0 \ar[r]^c &X \ar@{-->}[r] &
}
$$
where $R_1,R_0 \in \R$. Then we have an $\EE$-triangle $\xymatrix{Y' \ar[r]^-{\svecv{g'}{-y}} &Z\oplus R_0 \ar[r]^-{\svech{f}{c}} &X \ar@{-->}[r] &}$ which induces a short exact sequence
$$(\lozenge) \quad 0\to Y'\xrightarrow{\overline g'} Z\xrightarrow{\overline f} X\to 0$$
in $\oB$ which is isomorphic to $(\blacklozenge)$. Hence $Y'=Y\oplus Y_0$ where $Y_0\in \R$. Denote morphism $Y' \xrightarrow{\svecv{g'}{-y}} Z\oplus R_0$ by $Y\oplus Y_0 \xrightarrow{\left(\begin{smallmatrix}
g_1'&g_2'\\
-y_1&-y_2
\end{smallmatrix}\right)} Z\oplus R_0$, then $\overline g'=\overline g_1'$. Since $\EE(X,Y)=0$, we have a commutative diagram
$$\xymatrix{
Y\oplus Y_0 \ar[d]_{\svecv{1}{0}} \ar[rr]^-{\left(\begin{smallmatrix}
g_1'&g_2'\\
-y_1&-y_2
\end{smallmatrix}\right)} &&Z\oplus R_0 \ar@{.>}[dll]^-{\svech{z_1}{z_2}} \ar[r]^-{\svech{f}{c}} &X \ar@{-->}[r] &\\
Y
}
$$
which implies $\overline 1_Y=\overline {z_1g_1'}$. Then $(\lozenge)$ splits, so is $(\blacklozenge)$.\\
\end{proof}

For an object $X\in \B$, we can always get the following commutative diagram $(\heartsuit)$.
$$\xymatrix{
\Omega X \ar[r]^{a} \ar[d]^{a'} &\Omega R^1 \ar@{=}[r] \ar[d]^b &\Omega R^1\ar[d]^{p_1}\\
P_X \ar[r]^{b'} \ar[d] &\Omega R^2 \ar[r]^{p_2} \ar[d]^c &P\ar[r]^{q_2} \ar[d]^{q_1} &R^2 \ar@{=}[d] \ar@{-->}[r] & \\
X \ar@{=}[r] \ar@{-->}[d] &X \ar[r]^x \ar@{-->}[d] &R^1 \ar@{-->}[d] \ar[r]^d &R^2 \ar@{-->}[r] &\\
& & &
}$$
where $R^1,R^2\in \R$, $P,P_X\in \mathcal P$.  We denote by $\pd_{\oB}(X)$ the projective dimension of $X$ in $\oB$. Then we have the following result.

\begin{lem}\label{imp2}
Let $X\notin \R$ and $\add X\cap \R\subseteq \mathcal P$. Then
$\pd_{\oB}(X)\leq 1$ if and only if in the diagram $(\heartsuit)$, the morphism $a:\Omega X\to \Omega R^1$ factors through $\R$. 
\end{lem}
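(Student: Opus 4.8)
```latex
The plan is to read the projective dimension of $X$ in $\oB$ off the commutative diagram $(\heartsuit)$ by using the fact (Proposition \ref{b1}(b)) that $\underline{\Omega\R}$ is the subcategory of enough projectives in $\oB$. The top row and middle column of $(\heartsuit)$ exhibit $\Omega X, \Omega R^1, \Omega R^2$ as objects of $\Omega\R$ (note $\Omega X\in\Omega\R$ because $X\in\h$ and hence $\Omega X\in\CoCone(\mathcal P,\R)=\Omega\R$ up to the relevant summand analysis), and the left column $\EE$-triangle $\Omega X \xrightarrow{a'} P_X \to X \dashrightarrow$ together with $a\colon\Omega X\to\Omega R^1$ should be massaged into a projective presentation of $X$ in $\oB$. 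The first thing I would do is verify that applying the functor $H\colon\B\to\oB$ (Proposition \ref{b1}(c)) to the relevant $\EE$-triangles produces, in $\oB$, an exact sequence $\Omega R^1 \to X \to 0$ realizing $\Omega R^1$ as a projective cover-type object surjecting onto $X$, with kernel controlled by the map $\overline a\colon \Omega X\to\Omega R^1$.

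The key reduction is this: since $\Omega R^1$ is projective in $\oB$ and surjects onto $X$ (the map $\overline{p_2}$ or the induced map from the diagram), we have $\pd_{\oB}(X)\le 1$ precisely when the kernel of this surjection is itself projective in $\oB$, i.e. lies in $\add\underline{\Omega\R}$. I would identify that kernel, via the snake-type analysis of $(\heartsuit)$, with the image of $\overline a$, and argue that the syzygy $\Omega X$ maps onto this kernel. Then $\pd_{\oB}(X)\le 1$ becomes equivalent to the statement that $\overline a\colon\Omega X\to\Omega R^1$ splits off its image as a projective summand, which in the quotient $\oB$ translates back (using that a morphism in $\Omega\R$ factors through $\R$ iff it factors through $\mathcal P$, as noted in the proof of Proposition \ref{b1}(b)) into the condition that $a$ factors through $\R$ in $\B$. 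The hypotheses $X\notin\R$ and $\add X\cap\R\subseteq\mathcal P$ guarantee that $X$ has no spurious projective/rigid summands in $\oB$, so that $\overline a=0$ (equivalently $a$ factoring through $\R$) is genuinely equivalent to the syzygy being projective rather than being obscured by a direct summand.

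For the forward direction, assuming $\pd_{\oB}(X)\le 1$, I would take the minimal projective presentation $0\to \underline{\Omega R_0}\to\underline{\Omega R_1}\to X\to 0$ in $\oB$, compare it with the presentation extracted from $(\heartsuit)$ using the horseshoe/comparison lemma, and conclude that the comparison map forces $\overline a$ to be zero, i.e. $a$ factors through $\mathcal P\subseteq\R$. For the converse, assuming $a$ factors through $\R$ (equivalently $\overline a=0$ in $\oB$), the sequence degenerates so that the surjection $\underline{\Omega R^1}\to X$ already has projective kernel, giving $\pd_{\oB}(X)\le 1$ directly.

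The main obstacle I anticipate is the careful bookkeeping in identifying exactly which object is the kernel of the projective surjection onto $X$ in $\oB$, and controlling the direct summands that $\CoCone(\R,\R)$ and $\Omega\R$ acquire under the passage to the quotient. In particular, the equivalence ``$\overline a=0$ in $\oB$'' $\Leftrightarrow$ ``$a$ factors through $\R$ in $\B$'' must be handled with the precise fact that for morphisms between objects of $\Omega\R$, factoring through $\R$ is the same as factoring through $\mathcal P$; conflating $\R$-factorization with $\mathcal P$-factorization at the wrong stage is the likely pitfall. The hypothesis $\add X\cap\R\subseteq\mathcal P$ is exactly what rules out the degenerate case where $a$ factors through $\R$ for trivial reasons unrelated to the projective dimension, so I would keep a close eye on where that hypothesis is actually used.
```
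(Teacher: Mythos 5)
Your overall strategy is the same as the paper's: pass to the exact sequence $\Omega X\xrightarrow{\overline a}\Omega R^1\xrightarrow{\overline b}\Omega R^2\xrightarrow{\overline c}X\to 0$ in $\oB$ induced by $(\heartsuit)$, use that $\pd_{\oB}(X)\le 1$ is detected by projectivity of a syzygy, and translate ``$\overline a=0$'' into ``$a$ factors through $\R$''. Your ``if'' direction is correct: $\overline a=0$ turns the sequence into a length-one projective resolution $0\to\Omega R^1\to\Omega R^2\to X\to 0$.

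The ``only if'' direction, however, has a genuine gap, concentrated exactly where you wave at ``bookkeeping''. The projective of $\oB$ surjecting onto $X$ in $(\heartsuit)$ is $\Omega R^2$ (via $\overline c$), not $\Omega R^1$, and the kernel of that surjection is $S_1:=\operatorname{Im}\overline b$, not $\operatorname{Im}\overline a$; the image of $\overline a$ is $\ker\overline b$, one step further back. So ``$\pd\le 1$ iff the first syzygy is projective'' (or your Schanuel comparison with a minimal presentation) only yields that $S_1$ is projective, and your claim that the comparison ``forces $\overline a$ to be zero'' does not follow: $\overline a=0$ is equivalent to $\overline b$ being monic, and a non-minimal presentation of an object of projective dimension $\le 1$ need not have monic first map. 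The paper closes this gap with a two-stage argument you omit: $S_1$ projective splits $0\to S_2\to\Omega R^1\to S_1\to 0$ (where $S_2=\operatorname{Im}\overline a$), so $S_2$ is projective; the epimorphism $\Omega X\twoheadrightarrow S_2$ then splits, making $S_2$ a projective direct summand of $\Omega X$; finally, after reducing to $X$ indecomposable, the indecomposability of $\Omega X$ in $\oB$ together with $X\notin\R$ forces $S_2=0$ (otherwise $\Omega X\simeq S_2\in\overline{\Omega\R}$ and $X\in\R$), with the degenerate case $\overline b=0$ handled separately using $\add X\cap\R\subseteq\mathcal P$. Your appeal to ``$X$ has no spurious summands'' misses the point that the dangerous projective summand lives in $\Omega X$, not in $X$; without this summand analysis the argument does not close.
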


\begin{proof}

The diagram $(\heartsuit)$ induces an exact sequence $\Omega X \xrightarrow{\overline a} \Omega R^1 \xrightarrow{\overline b} \Omega R^2 \xrightarrow{\overline c} X\to 0$ in $\oB$. 
If $a$ factors through $\R$, $X$ admits a short exact sequence $0\to \Omega R^1 \to \Omega R^2 \to X \to 0$,
hence $\pd_{\oB}(X)\leq 1$.
\smallskip

Now we prove the ``only if" part. We can assume that $X$ is indecomposable in $\oB$.

\smallskip

Since $X\notin \R$, we have $\overline c\neq 0$. If $\overline b=0$, then $X$ is a direct summand of $\Omega R^2$. Hence $X\in \Omega \R$ and we can take $R^1\in\mathcal P$. Hence $\Omega R^1\in \mathcal P$ and $\overline a=0$. Now let $\overline b\neq 0$. Then we have the following exact sequence:
$$\xymatrix@C=0.5cm@R0.4cm{\Omega X \ar[rr]^{\overline a} \ar@{->>}[dr]_{\overline {r_4}} &&\Omega R^1 \ar[rr]^{\overline b} \ar@{->>}[dr]_{\overline {r_2}} &&\Omega R^2 \ar[r]^-{\overline c} &X \ar[r] &0\\
&S_2\ar@{ >->}[ur]_{\overline {r_3}}  &&S_1 \ar@{ >->}[ur]_{\overline {r_1}}}
$$
where $\Omega R^1 \xrightarrow{\overline {r_2}} S_1\xrightarrow{\overline {r_1}} \Omega R^2$ is an epic-monic factorization of $\overline b$ and $\Omega X \xrightarrow{\overline {r_4}} S_2\xrightarrow{\overline {r_3}} \Omega R^1$ is an epic-monic factorization of $\overline a$. Since $\pd_{\oB}(X)\leq 1$, we have $S_1\in \overline {\Omega \R}$, hence we get a split short exact sequence $0\to S_2\xrightarrow{\overline {r_3}} \Omega R^1 \xrightarrow{\overline {r_2}} S_1\to 0$ which implies $S_2\in \overline{\Omega \R}$. Thus $S_2$ is a direct summand of $\Omega X$. But $\Omega X$ is indecomposable in $\oB$, if $S_2\neq 0$ in $\oB$, then $\Omega X\simeq S_2\in \overline {\Omega \R}$, which means $X\in \R$, a contradiction. Hence $S_2\in \R$ and $\overline a=0$.
\end{proof}

\begin{lem}\label{ses}
Let $X\notin \R$ such that $\add X\cap \R\subseteq \mathcal P$ and $\pd_{\oB}(X)\leq 1$. Let $Y\notin \R$. If we have an $\EE$-triangle $$\xymatrix{Y \ar[r]^g &Z \ar[r]^f &X \ar@{-->}[r] &}$$
such that $\overline f$ is an epimorphism in $\oB$, then this $\EE$-triangle induces a short exact sequence $$0\to Y\xrightarrow{~\overline g~}  Z\xrightarrow{~\overline f~} X\to 0.$$
\end{lem}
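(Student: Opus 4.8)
The quotient functor $H\colon\B\to\oB$ of Proposition \ref{b1} carries the given $\EE$-triangle to an exact sequence $Y\xrightarrow{\overline g}Z\xrightarrow{\overline f}X$ in $\oB$, so exactness in the middle is automatic; since $\overline f$ is assumed to be epic, the sequence is also exact at $X$. Hence the whole statement reduces to proving that $\overline g$ is a monomorphism in $\oB$, and this is the only point that really needs an argument.

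To control $\Ker\overline g$ I would realise it as the image of a connecting morphism. Take the defining $\EE$-triangle of the syzygy, $\Omega X\xrightarrow{a'}P_X\xrightarrow{\pi}X\overset{\delta_X}{\dashrightarrow}$ with $P_X\in\mathcal P$. Since $P_X$ is projective and $f$ is a deflation, $\pi$ lifts through $f$ to a morphism $\tilde\pi\colon P_X\to Z$ with $f\tilde\pi=\pi$; completing this square to a morphism of $\EE$-triangles (the completion recalled in Section~2, together with its cone form) yields a morphism $h\colon\Omega X\to Y$ and an $\EE$-triangle $\Omega X\xrightarrow{\svecv{h}{a'}}Y\oplus P_X\longrightarrow Z\dashrightarrow$. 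Applying $H$ and using $H(P_X)=0$ gives an exact sequence $\overline{\Omega X}\xrightarrow{\overline h}Y\xrightarrow{\overline g}Z$ in $\oB$, so that $\Ker\overline g=\Im\overline h$. Thus everything comes down to showing $\overline h=0$, that is, that the connecting morphism $h$ factors through $\R$.

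This final step is where the hypothesis $\pd_{\oB}(X)\le1$ must enter, and I expect it to be the real obstacle. By Lemma \ref{imp2}, under $\pd_{\oB}(X)\le1$ the syzygy map $a\colon\Omega X\to\Omega R^1$ of diagram $(\heartsuit)$ factors through $\R$, i.e. $\overline a=0$. The plan is to transport this vanishing to $h$ by comparing the two morphisms inside $(\heartsuit)$: the extension $\delta\in\EE(X,Y)$ defining the given triangle is the pushforward $h_*\delta_X$ of the syzygy extension, and I would use the commutativity of $(\heartsuit)$ together with the epic-ness of $\overline f$ to factor $h$, modulo morphisms through $\mathcal P$, as $\Omega X\xrightarrow{a}\Omega R^1\to Y$; then $\overline a=0$ forces $\overline h=0$, whence $\Ker\overline g=0$ and $\overline g$ is monic, completing the short exact sequence. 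The delicate point is the construction of the map $\Omega R^1\to Y$ through which $h$ factors. Note that $\pd_{\oB}(X)\le1$ cannot suffice on its own, since the Yoneda obstruction $\Ext^2_{\oB}(X,\Ker\overline g)$ is too coarse to detect that the kernel vanishes; one is forced to exploit the explicit syzygy map $a$ and the hypothesis that $\overline f$ is epic simultaneously. Carrying out this comparison carefully — presumably after the reduction to $X$ and $\Omega X$ indecomposable used in the proof of Lemma \ref{imp2} — is where I would spend the bulk of the effort, and it is the crux of the lemma.
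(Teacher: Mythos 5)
Your reduction to showing that $\overline g$ is monic, and your identification of $\Ker\overline g$ with the image of a connecting morphism $\Omega X\to Y$ that must then be killed via Lemma \ref{imp2}, is exactly the skeleton of the paper's proof. But the step you defer --- producing a morphism $d'\colon\Omega R^1\to Y$ through which the connecting map factors as $d'\circ a$ --- is the entire content of the lemma, and your proposal only announces a plan for it. As written there is a genuine gap: you never construct the comparison map, even though you correctly sense that this is the one place where the two hypotheses ($\overline f$ epic and $\pd_{\oB}(X)\le 1$) must be combined.

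The paper fills this gap as follows. Take the $\EE$-triangle $\Omega R^1\to\Omega R^2\xrightarrow{c}X\dashrightarrow$ from diagram $(\heartsuit)$. Since $\Omega R^2$ is projective in $\oB$ and $\overline f$ is epic, there is $z\colon\Omega R^2\to Z$ with $\overline{fz}=\overline c$; the discrepancy $fz-c$ factors through $\R$, hence through $p_2\colon\Omega R^2\to P$ because $\EE(R^2,\R)=0$, say $fz-c=q'p_2$, and projectivity of $P$ lifts $q'$ to $q''\colon P\to Z$ with $fq''=q'$. Then $z-q''p_2$ is an honest lift of $c$ through $f$ in $\B$, so it completes to a morphism of $\EE$-triangles over $1_X$, producing the desired $d'\colon\Omega R^1\to Y$. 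Composing with the morphism of triangles $(a,\,\cdot\,,1_X)$ from $\Omega X\to P_X\to X$ gives a morphism of triangles whose connecting map is $d'a$, whence an $\EE$-triangle $\Omega X\xrightarrow{\svecv{d'a}{a'}}Y\oplus P_X\to Z\dashrightarrow$ and an exact sequence $\Omega X\xrightarrow{\overline{d'a}}Y\xrightarrow{\overline g}Z$ in $\oB$. Lemma \ref{imp2} gives $\overline a=0$, so $\overline{d'a}=0$ and $\overline g$ is monic. (Note also that your $\overline h$ is only well defined because two connecting morphisms differ by something factoring through $P_X\in\mathcal P\subseteq\R$; the paper sidesteps this by exhibiting one specific connecting morphism and using the induced $\EE$-triangle directly.) So your route is the paper's route; you stopped short of the one construction that makes it work.
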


\begin{proof}
By hypothesis, we already have an exact sequence $ Y\xrightarrow{\overline g} Z\xrightarrow{\overline f}  X\to 0$. We only need to check that $\overline g$ is a monomorphism.

$X$ admits the following commutative diagram
$$\xymatrix{
\Omega R^1 \ar@{=}[r] \ar[d]^b &\Omega R^1\ar[d]^{p_1}\\
\Omega R^2 \ar[r]^{p_2} \ar[d]^c &P\ar[r]^{q_2} \ar[d]^{q_1} &R^2 \ar@{=}[d] \ar@{-->}[r] &\\
X \ar[r]^x \ar@{-->}[d] &R^1 \ar@{-->}[d] \ar[r]^d &R^2 \ar@{-->}[r] &.\\
& &
}$$
Since $\overline f$ is an epimorphism and $\Omega R^2$ is projective in $\oB$, there is a morphism $z:\Omega R^2\to Z$ such that $\overline {fz}=\overline c$. Then there is a morphism $q':P\to X$ such that $fz-c=q'p_2$. But $P$ is projective, hence there is a morphism $q'':P\to Z$ such that $fq''=q'$. It follows that $f(z-q''p_2)=c$ and  we have the following commutative diagram
$$
\xymatrix{
\Omega X \ar[r]^{a'} \ar[d]_{a}  &P_X \ar[r] \ar[d] &X \ar@{=}[d] \ar@{-->}[r] &\\
\Omega R^1 \ar[r] \ar[d]_{d'} &\Omega R^2 \ar[r]^c \ar[d]^{z-q''p_2} &X \ar@{=}[d] \ar@{-->}[r] &\\
Y \ar[r]_g &Z \ar[r]_f &X \ar@{-->}[r] &
}
$$
where $a$ factors through $\R$. Then we have an $\EE$-triangle $\xymatrix{\Omega X \ar[r]^-{\svecv{d'a}{a'}} &Y\oplus P_X \ar[r]^-{\svech{g}{*}} &Z \ar@{-->}[r] &}$ which induces an exact sequence $\Omega X\xrightarrow{0} Y\xrightarrow{~\overline g~} Z$. Hence $\overline g$ is a monomorphism.
\end{proof}

Now we are ready to prove Theorem \ref{main4}.

\begin{proof}
Let $\M$ be a cluster tilting subcategory such that $\M\cap \R=\mathcal P$. Any indecomposable object $\Omega R\in \Omega \R$ such that $\Omega R\notin \M$ admits a commutative diagram
$$\xymatrix{
\Omega R \ar[r]^f \ar@{=}[d] &M_1 \ar[d] \ar[r]^g &M_2 \ar[d] \ar@{-->}[r] &\\
\Omega R \ar[r] &P \ar[r] &R \ar@{-->}[r] &
}$$
where $M_1,M_2\in \M$ and $P\in \mathcal P$.  We get that $\overline g$ is an epimorphism, hence by Lemma \ref{ses}, we have a short exact sequence $0\to \Omega R\to M_1\to M_2\to 0$. Since $\EE(\M,\M)=0$ implies $\Ext^1_{\oB}(\M,\M)=0$ by Lemma \ref{imp}, we get that $\overline \M$ is a tilting subcategory. 
\end{proof}




\section{Applications}

In this section, let $\B$ be a triangulated category with shift functor $\Sigma$. 

We show the following proposition, which is a generalization of \cite[Theorem 3.5]{HJ} and \cite[Theorem 6.6]{B}.

\begin{prop}\label{TS}
Let $\B$ be a triangulated category with Serre functor $\mathbb{S}$. Let $\X \subseteq \h$ be a maximal $\R$-rigid subcategory such that $\mathbb{S}\X=\Sigma^2 \X$ and $\X\neq \R$. If $\X$ is contravariantly finite, then $\X$ is cluster tilting.
\end{prop}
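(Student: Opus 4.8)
The plan is to verify directly that $\X$ satisfies the three defining conditions of a cluster tilting subcategory, after making the reductions forced by the triangulated setting. Here $\mathcal P=\mathcal I=0$, the functor $\Sigma$ is the suspension and $\EE(-,-)=\Hom_\B(-,\Sigma-)$. Consequently the subgroup $\overline{[\R]}(M,\Sigma N)$ of Definition~\ref{d1} collapses to $0$ (its members factor through $I_N=0$), so the hypothesis that $\X$ is $\R$-rigid says exactly that $[\R](\X,\Sigma\X)=0$, and Proposition~\ref{maximal} tells us that maximal $\R$-rigidity is equivalent to (a) $\R\subseteq\Cone(\X,\X)$ and (b) $\X=\{M\in\h\mid [\R](M,\Sigma\X)=0=[\R](\X,\Sigma M)\}$. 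The functorial finiteness axiom is then the cheapest to dispatch: $\X$ is contravariantly finite by assumption, and since $\B$ is Hom-finite Krull--Schmidt with a Serre functor, Serre duality interchanges contravariant and covariant finiteness of subcategories; hence $\X$ is also covariantly finite, and functorially finite.

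Next I would exploit the Calabi--Yau condition. From $\mathbb{S}\X=\Sigma^2\X$ and the Serre isomorphism $\Hom_\B(A,B)\cong D\Hom_\B(B,\mathbb{S}A)$ (with $D=\Hom_k(-,k)$) one gets, for every $M\in\B$ and $X\in\X$, that $\Hom_\B(X,\Sigma M)\cong D\Hom_\B(\Sigma M,\mathbb{S}X)\cong D\Hom_\B(M,\Sigma W)$, where $\mathbb{S}X\cong\Sigma^2 W$ with $W\in\X$. Letting $X$ (equivalently $W$) range over $\X$ yields the symmetry
\[
\Hom_\B(M,\Sigma\X)=0\iff\Hom_\B(\X,\Sigma M)=0\qquad(\star).
\]
Thus the two orthogonal classes $\{M\mid\EE(M,\X)=0\}$ and $\{M\mid\EE(\X,M)=0\}$ coincide in a single subcategory $\mathcal Z$, and both remaining cluster tilting conditions reduce to the single equality $\X=\mathcal Z$.

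To prove $\mathcal Z\subseteq\X$ I would first show that any $M\in\mathcal Z$ lies in $\h=\CoCone(\R,\R)$: using $\R\subseteq\Cone(\X,\X)$ together with $\Hom_\B(M,\Sigma\X)=\Hom_\B(\X,\Sigma M)=0$, every morphism between $M$ and an object of $\R$ is forced to factor through $\X$, and a completion argument using the contravariant finiteness of $\R$ and Serre duality then produces a defining $\EE$-triangle $M\to R\to R'\dashrightarrow$ with $R,R'\in\R$. Once $M\in\h$, the vanishings give $[\R](M,\Sigma\X)\subseteq\Hom_\B(M,\Sigma\X)=0$ and $[\R](\X,\Sigma M)=0$, so Proposition~\ref{maximal}(b) places $M$ in $\X$. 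For the reverse inclusion $\X\subseteq\mathcal Z$, which is precisely the genuine rigidity $\EE(\X,\X)=0$, I would start from the defining triangle $X\xrightarrow{a}R_0\to R_1\xrightarrow{c}\Sigma X$ of an object $X\in\h$: since $\R$-rigidity kills every morphism $X\to\Sigma X'$ factoring through $a$, the connecting map embeds $\Hom_\B(X,\Sigma X')$ into $\Hom_\B(R_1,\Sigma^2 X')\cong D\Hom_\B(Z,R_1)$ where $\Sigma^2 X'\cong\mathbb{S}Z$, $Z\in\X$, and the Serre pairing identifies the surviving obstruction with $\EE(Z,X)$; maximality is then invoked to eliminate it.

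The step I expect to be the main obstacle is exactly this upgrade from the relative conditions to the absolute ones. Maximal $\R$-rigidity controls only morphisms factoring through $\R$, whereas cluster tilting demands control of all extensions, and the Serre/$2$-Calabi--Yau identity $(\star)$ makes the two directions of rigidity mutually equivalent, so rigidity cannot be extracted from $(\star)$ alone and must be pinned down by maximality. Concretely the technical heart is twofold: showing that an object orthogonal to $\X$ genuinely lands in $\h$ so that Proposition~\ref{maximal}(b) applies, and breaking the circular self-consistency of the rigidity obstruction by an adjunction-of-objects argument against maximality. The hypothesis $\X\neq\R$ enters here precisely to exclude the degenerate solution $\X=\R$, which is maximal $\R$-rigid but merely rigid rather than cluster tilting.
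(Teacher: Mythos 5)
Your overall architecture coincides with the paper's: reduce to the triangulated dictionary ($\mathcal I=0$, $\overline{[\R]}=0$, $\EE=\Hom(-,\Sigma-)$), use $\mathbb{S}\X=\Sigma^2\X$ and Serre duality to show that the two orthogonality conditions $\Hom_\B(\X,\Sigma M)=0$ and $\Hom_\B(M,\Sigma\X)=0$ are equivalent, and then feed the resulting vanishing of $[\R](M,\Sigma\X)$ and $[\R](\X,\Sigma M)$ into Proposition~\ref{maximal}(b) to get $M\in\X$. That part, together with the functorial-finiteness remark, is exactly what the paper does.

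The genuine gap is in your argument that $\X$ is honestly rigid, i.e.\ $\EE(\X,\X)=0$. The paper disposes of this in one line by citing \cite[Proposition 3.4]{YZ}; you attempt to reprove it, and the sketch does not close. After embedding $\Hom_\B(X,\Sigma X')$ via the connecting map into $\Hom_\B(R_1,\Sigma^2 X')\cong D\Hom_\B(Z,R_1)$ (which is correct, since $a$ is a left $\R$-approximation and $[\R](X,\Sigma X')=0$), you assert that ``the Serre pairing identifies the surviving obstruction with $\EE(Z,X)$; maximality is then invoked to eliminate it.'' Both halves are wrong. First, the image of the embedding is the kernel of $D\Hom_\B(Z,R_1)\to D\Hom_\B(Z,R_0)$, i.e.\ $D$ of the cokernel of $b_*\colon\Hom_\B(Z,R_0)\to\Hom_\B(Z,R_1)$; chasing the long exact sequence and using that $c\colon R_1\to\Sigma X$ is a right $\R$-approximation identifies this cokernel with $[\R](Z,\Sigma X)$, not with all of $\EE(Z,X)$. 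Second, if the obstruction really were $\EE(Z,X)$ with $Z,X\in\X$, appealing to anything to kill it would be circular, since $\EE(\X,\X)=0$ is precisely the statement being proved; and maximal $\R$-rigidity gives no control whatsoever over extension groups between objects already inside $\X$. The correct finish is that the surviving obstruction is exactly $D\bigl([\R](Z,\Sigma X)\bigr)$, which vanishes by $\R$-rigidity alone --- maximality plays no role in this step. A secondary soft spot is your claim that an object $M$ orthogonal to $\X$ lies in $\h$ (needed before Proposition~\ref{maximal}(b) applies); your ``completion argument'' is only gestured at, though in fairness the paper's own proof passes over this point in silence as well.
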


\begin{proof}
By \cite[Proposition 3.4]{YZ}, when $\mathbb{S}\X=\Sigma^2\X$, $\X$ is $\R$-rigid implies that $\X$ is rigid. To show that $\X$ is cluster tilting, it is enough to prove that if $\Hom_{\B}(\X,\Sigma M)=0$, then $M\in \X$.\\
Since $\mathbb{S}\X=\Sigma^2\X$, $\Hom_{\B}(\X,\Sigma M)=0$ implies that $\Hom_{\B}(M,\Sigma \X)=0$. Then we have $[\R](M,\Sigma \X)=0=[\overline \R](M,\Sigma \X)$ and $[\R](\X,\Sigma M)=0=[\overline \R](\X,\Sigma M)$. Hence by Proposition \ref{maximal}, we have $M\in \X$.
\end{proof}

When $\B$ is a 2-Calabi-Yau triangulated category, any subcategory in $\h$ is maximal $\R$-rigid if and only if it is maximal rigid. Then we have the following corollary, which generalizes \cite[Theorem 2.6]{ZhZ}.

\begin{cor}\label{2CY}
Let $\B$ be a 2-Calabi-Yau triangulated category and $\R$ be a contravariantly finite rigid subcategory. Then any contravariantly finite maximal rigid subcategory $\X \subseteq \h$ such that $\X\neq \R$ is cluster tilting.
\end{cor}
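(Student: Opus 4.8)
The plan is to read this Corollary as a specialization of Proposition \ref{TS}; the only thing that really needs checking is that in the $2$-Calabi--Yau setting the hypotheses of that proposition are automatic and that ``maximal rigid'' may be replaced by ``maximal $\R$-rigid''. First I would recall that a $2$-Calabi--Yau triangulated category has Serre functor $\mathbb{S}=\Sigma^2$ as an autoequivalence; in particular $\mathbb{S}\X=\Sigma^2\X$ holds for \emph{every} subcategory $\X$, so that hypothesis of Proposition \ref{TS} costs nothing.

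The heart of the reduction is the identification of the two rigidity notions. By the Lemma preceding Proposition \ref{sum}, every rigid subcategory is $\R$-rigid; conversely, since $\mathbb{S}(-)=\Sigma^2(-)$ on all of $\B$, the result \cite[Proposition 3.4]{YZ} (already used in the proof of Proposition \ref{TS}) shows that every $\R$-rigid subcategory of $\h$ is in fact rigid. Hence, for subcategories of $\h$, ``$\R$-rigid'' and ``rigid'' amount to the same condition. To upgrade this to the maximal versions, I would suppose $\X$ is maximal rigid and that $\add(\X\cup\X')$ is $\R$-rigid for some $\X'\subseteq\h$. Since $\h=\CoCone(\R,\R)$ is closed under direct sums and, by Lemma \ref{summand}, under direct summands, $\add(\X\cup\X')$ again lies in $\h$, so by the equivalence just noted it is rigid; maximality of $\X$ then forces $\X'\subseteq\X$. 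Thus $\X$ is maximal $\R$-rigid.

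With these two observations in hand the Corollary is immediate: $\X$ is contravariantly finite, maximal $\R$-rigid, satisfies $\mathbb{S}\X=\Sigma^2\X$, and $\X\neq\R$, so Proposition \ref{TS} yields that $\X$ is cluster tilting. I do not expect a genuine obstacle here: all the analytic content — the passage from a maximal $\R$-rigid subcategory to a cluster tilting one through the characterization in Proposition \ref{maximal} — is already carried out inside Proposition \ref{TS}. The one point requiring a little care is the bookkeeping that the comparison subcategory $\add(\X\cup\X')$ stays inside $\h$, which is precisely why the closure properties of $\CoCone(\R,\R)$ must be invoked.
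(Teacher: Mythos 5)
Your proposal is correct and follows essentially the same route as the paper: the authors also derive the corollary from Proposition \ref{TS} by observing that in the $2$-Calabi--Yau case $\mathbb{S}=\Sigma^2$ holds globally, so that (via \cite[Proposition 3.4]{YZ}) $\R$-rigid and rigid subcategories of $\h$ coincide and hence so do the maximal versions. Your extra bookkeeping that $\add(\X\cup\X')$ remains in $\h$ is a detail the paper leaves implicit, but it is the right thing to check.
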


We show the following proposition, which generalizes \cite[Corollary 6.9]{B}.

\begin{prop}\label{completion3}
Let $\B$ be a triangulated category and $\R$ be cluster tilting. Let $\U\subseteq \B_\R$ be a contravariantly finite rigid subcategory such that $\pd_{\oB}(U)\leq 1$, $\forall U\in \U$. Denote $\{M\in \oB \text{ }|\text{ } \Ext^1_{\oB}(\overline \U,M)=0 \text{ and } \Ext^1_{\oB}(M,N)=0 \text{ if } \Ext^1_{\oB}(\overline \U,N)=0\}$ by $\mathrm{P}(\overline \U^{\bot_1})$. Then $\mathrm{P}(\overline \U^{\bot_1})$ is a tilting subcategory of $\oB$.
\end{prop}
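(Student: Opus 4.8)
The plan is to verify the three defining conditions of a tilting subcategory directly for $\mathrm{P}(\overline\U^{\bot_1})$, in the spirit of a Bongartz completion. Since $\R$ is cluster tilting in the triangulated category $\B$ we have $\h=\B$, the quotient $\oB=\B/\R$ is abelian with enough projectives the image of $\Omega\R$ (where $\Omega R=\Sigma^{-1}R$) by Proposition \ref{b1}, and $\oB$ is Gorenstein of dimension at most one by \cite[Theorem 4.3]{KZ}. Because $\U$ is rigid, Lemma \ref{imp} gives $\Ext^1_{\oB}(\overline\U,\overline\U)=0$; hence $\overline\U\subseteq\overline\U^{\bot_1}$, and since any $N\in\overline\U^{\bot_1}$ satisfies $\Ext^1_{\oB}(\overline\U,N)=0$ we get $\Ext^1_{\oB}(U,N)=0$ for $U\in\overline\U$, so $\overline\U\subseteq\mathrm{P}(\overline\U^{\bot_1})$. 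Condition (b) is then immediate: for $M,M'\in\mathrm{P}(\overline\U^{\bot_1})$ we have $M'\in\overline\U^{\bot_1}$, and $M$ being Ext-projective in $\overline\U^{\bot_1}$ forces $\Ext^1_{\oB}(M,M')=0$.

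For condition (c), fix an indecomposable projective $\Omega R=\Sigma^{-1}R$ of $\oB$. Using that $\U$ is contravariantly finite and $\B$ is Hom-finite, form a universal extension of $\Omega R$ relative to $\overline\U$: an $\EE$-triangle $\Omega R\to E_R\to U_R\dashrightarrow$ in $\B$ with $U_R\in\add\overline\U$ whose connecting map $\Hom_{\oB}(\overline\U,U_R)\to\Ext^1_{\oB}(\overline\U,\Omega R)$ is surjective (concretely, pull back the $\EE$-triangle $\Omega R\to0\to R\dashrightarrow$ along a right $\U$-approximation $U_R\to R$). Together with $\Ext^1_{\oB}(\overline\U,U_R)=0$ (rigidity) this gives $\Ext^1_{\oB}(\overline\U,E_R)=0$, i.e. $E_R\in\overline\U^{\bot_1}$. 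Since $\pd_{\oB}(U_R)\le1$ and the deflation $E_R\to U_R$ maps to an epimorphism in $\oB$, Lemma \ref{ses} converts the triangle into a short exact sequence
\[
0\to\Omega R\to E_R\to U_R\to0
\]
in $\oB$. Applying $\Hom_{\oB}(-,N)$ to it for $N\in\overline\U^{\bot_1}$ and using projectivity of $\Omega R$ (so $\Ext^1_{\oB}(\Omega R,N)=0$) and $\Ext^1_{\oB}(U_R,N)=0$ yields $\Ext^1_{\oB}(E_R,N)=0$; hence $E_R\in\mathrm{P}(\overline\U^{\bot_1})$. As $U_R\in\add\overline\U\subseteq\mathrm{P}(\overline\U^{\bot_1})$, the displayed sequence is the coresolution required in condition (c), with the case $\Ext^1_{\oB}(\overline\U,\Omega R)=0$ handled by $E_R=\Omega R$, $U_R=0$.

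Condition (a)—that every object of $\mathrm{P}(\overline\U^{\bot_1})$ has projective dimension at most one—is the crux. The idea is to identify $\mathrm{P}(\overline\U^{\bot_1})$ with the explicit completion $\B_0:=\add(\overline\U\cup\{E_R\})$, all of whose objects have $\pd_{\oB}\le1$ by the sequences of the previous paragraph together with the hypothesis $\pd_{\oB}(U)\le1$. One first checks that $\B_0$ is itself tilting: (a) is the pd bound just noted, (c) is given by the sequences $0\to\Omega R\to E_R\to U_R\to0$, and (b), i.e. $\Ext^1_{\oB}(\B_0,\B_0)=0$, follows from $E_{R'}\in\overline\U^{\bot_1}$ by applying $\Hom_{\oB}(-,E_{R'})$ to the defining sequences and using projectivity of $\Omega R$. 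One then shows $\B_0^{\bot_1}=\overline\U^{\bot_1}$: the inclusion $\subseteq$ is clear from $\overline\U\subseteq\B_0$, and for $\supseteq$ the computation $\Ext^1_{\oB}(E_R,N)=0$ above shows any $N\in\overline\U^{\bot_1}$ lies in $\B_0^{\bot_1}$. Finally, invoking that the Ext-projective objects of the perpendicular class of a tilting subcategory coincide with the subcategory itself gives $\mathrm{P}(\overline\U^{\bot_1})=\mathrm{P}(\B_0^{\bot_1})=\B_0$, whence condition (a).

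The main obstacle is precisely this last identification $\mathrm{P}(\overline\U^{\bot_1})=\B_0$, the maximality of the Bongartz completion: the inclusion $\B_0\subseteq\mathrm{P}(\overline\U^{\bot_1})$ is easy, whereas the reverse—that every Ext-projective object of $\overline\U^{\bot_1}$ already lies in $\add(\overline\U\cup\{E_R\})$—is the substantive point, where the torsion-theoretic analogue of $\add T=\mathrm{P}(\mathrm{Gen}\,T)$ must be used. If one prefers to avoid that identification, the projective-dimension bound in (a) can instead be secured through the Gorenstein property: since $\oB$ has Gorenstein dimension at most one, any object of finite projective dimension has $\pd_{\oB}\le1$, so it suffices to prove each object of $\mathrm{P}(\overline\U^{\bot_1})$ has finite projective dimension. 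A secondary technical point, used in (c), is the construction of the universal extension inside $\add\overline\U$, which rests on contravariant finiteness of $\U$ together with Hom-finiteness of $\B$.
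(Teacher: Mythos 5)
Your verification of conditions (b) and (c) is essentially the paper's own argument: the paper also takes the triangle $\Sigma^{-1}R\to V'\to U'\xrightarrow{u'}R$ with $u'$ a right $\U$-approximation (your ``universal extension'' $E_R$ is exactly this $V'$), deduces $\Ext^1_{\oB}(\overline\U,V')=0$ from rigidity of $\U$ via Lemma \ref{imp}, converts the triangle into a short exact sequence $0\to\Sigma^{-1}R\to V'\to U'\to 0$ by Lemma \ref{ses} using $\pd_{\oB}(U')\le 1$, and reads off $V'\in\mathrm{P}(\overline\U^{\bot_1})$ from the long exact sequence; condition (b) is the same one-line observation that Ext-projectives of $\overline\U^{\bot_1}$ are mutually $\Ext^1$-orthogonal. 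So far the two arguments coincide.

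The genuine gap is condition (a), and you have correctly located it but not closed it. Neither of your two proposed routes is carried out: the identification $\mathrm{P}(\overline\U^{\bot_1})=\add(\overline\U\cup\{E_R\})$ needs the Auslander--Smal{\o}/Bongartz maximality statement (Ext-projectives of the perpendicular class of a tilting subcategory coincide with the subcategory), which is exactly the nontrivial content and is not available off the shelf in $\oB$; and the Gorenstein route still requires you to prove that every object of $\mathrm{P}(\overline\U^{\bot_1})$ has \emph{finite} projective dimension, which you do not attempt. For comparison, the paper disposes of (a) in a single clause by asserting that $\oB$ is hereditary, so that \emph{every} subcategory trivially satisfies $\Ext^2_{\oB}(-,-)=0$; no further argument is given there. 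Note that this assertion is itself delicate: by Koenig--Zhu the quotient by a cluster tilting subcategory is only Gorenstein of dimension at most one in general, hence either hereditary or of infinite global dimension, so your instinct not to take heredity of $\oB$ for granted is sound --- but it means your write-up, as it stands, proves only (b) and (c) and leaves (a) as an acknowledged hole.
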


\begin{proof}
Let $R\in \R$. If $\Hom_{\B}(\U,R)=0$, we have $\Ext^1_{\B}(\U,\Sigma^{-1}R)=0$. By Lemma \ref{imp}, we obtain $\Ext^1_{\oB}(\overline \U,\Sigma^{-1}R)=0$. Since $\Sigma^{-1}R$ is projective, by the definition we have $\Sigma^{-1}R\in \mathrm{P}(\overline \U^{\bot_1})$.

If $\Hom_{\B}(\U,R)\neq 0$, $\R$ admits a triangle $\Sigma^{-1} R\xrightarrow{r} V' \xrightarrow{v'} U'\xrightarrow{u'} R$ where $u'\neq 0$ is a right $\U$-approximation. Then $\Ext^1_{\B}(\U,V')=0$ and hence $\Ext^1_{\oB}(\overline \U,V')=0$. Since $\pd_{\oB}(\U)\neq 1$, by Lemma \ref{ses}, we have a short exact sequence $0\to \Sigma^{-1} R\xrightarrow{\overline r} V' \xrightarrow{\overline v} U'\to 0$ in $\oB$. For any object $N$ such that $\Ext^1_{\oB}(\overline \U,N)=0$, we have the following exact sequence
$$0=\Ext^1_{\oB}(U',N)\to \Ext^1_{\oB}(V',N)\to \Ext^1_{\oB}(\Sigma^{-1}R,N)=0$$
which implies that $V'\in \mathrm{P}(\overline \U^{\bot_1})$. Since $\mathrm{P}(\overline \U^{\bot_1})$ is rigid and $\oB$ is hereditary, by the definition $\mathrm{P}(\overline \U^{\bot_1})$ is a tilting subcategory of $\oB$.
\end{proof}

\section{Examples}

In this section, we give two examples to explain our main results.

\begin{exm}\label{ex1}
Let $\Lambda$ be the $k$-algebra given by the quiver
$$\xymatrix@C=0.4cm@R0.4cm{
&&3 \ar[dl]\\
&5 \ar[dl] \ar@{.}[rr] &&2 \ar[dl] \ar[ul]\\
6 \ar@{.}[rr] &&4 \ar[ul] \ar@{.}[rr] &&1 \ar[ul]}$$
with mesh relations. The Auslander-Reiten quiver of $\B:=\mod\Lambda$ is given by
$$\xymatrix@C=0.4cm@R0.4cm{
&&{\begin{smallmatrix}
3&&\\
&5&\\
&&6
\end{smallmatrix}} \ar[dr] &&&&&&{\begin{smallmatrix}
1&&\\
&2&\\
&&3
\end{smallmatrix}} \ar[dr]\\
&{\begin{smallmatrix}
5&&\\
&6&
\end{smallmatrix}} \ar[ur] \ar@{.}[rr] \ar[dr] &&{\begin{smallmatrix}
3&&\\
&5&
\end{smallmatrix}} \ar@{.}[rr] \ar[dr] &&{\begin{smallmatrix}
4
\end{smallmatrix}} \ar@{.}[rr] \ar[dr] &&{\begin{smallmatrix}
2&&\\
&3&
\end{smallmatrix}} \ar[ur] \ar@{.}[rr] \ar[dr] &&{\begin{smallmatrix}
1&&\\
&2&
\end{smallmatrix}} \ar[dr]\\
{\begin{smallmatrix}
6
\end{smallmatrix}} \ar[ur] \ar@{.}[rr] &&{\begin{smallmatrix}
5
\end{smallmatrix}} \ar[ur] \ar@{.}[rr] \ar[dr] &&{\begin{smallmatrix}
3&&4\\
&5&
\end{smallmatrix}} \ar[ur] \ar[r] \ar[dr] \ar@{.}@/^15pt/[rr] &{\begin{smallmatrix}
&2&\\
3&&4\\
&5&
\end{smallmatrix}} \ar[r] &{\begin{smallmatrix}
&2&\\
3&&4
\end{smallmatrix}} \ar[ur] \ar@{.}[rr] \ar[dr] &&{\begin{smallmatrix}
2
\end{smallmatrix}} \ar[ur] \ar@{.}[rr] &&{\begin{smallmatrix}
1
\end{smallmatrix}}.\\
&&&{\begin{smallmatrix}
4&&\\
&5&
\end{smallmatrix}} \ar[ur] \ar@{.}[rr] &&{\begin{smallmatrix}
3
\end{smallmatrix}} \ar[ur] \ar@{.}[rr] &&{\begin{smallmatrix}
2&&\\
&4&
\end{smallmatrix}} \ar[ur]
}$$
We denote by ``~$\bullet$" in the Auslander-Reiten quiver the indecomposable objects belong to a subcategory and by ``~$\circ$'' the indecomposable objects do not belong to it.
$$\xymatrix@C=0.2cm@R0.2cm{
&&&\bullet \ar[dr] &&&&&&\bullet \ar[dr]\\
{R:} &&\bullet \ar[ur]  \ar[dr] &&\circ  \ar[dr] &&\circ  \ar[dr] &&\circ  \ar[ur]  \ar[dr] &&\bullet \ar[dr]\\
&\bullet \ar[ur]  &&\circ \ar[ur]  \ar[dr] &&\circ \ar[ur] \ar[r] \ar[dr] &\bullet \ar[r] &\circ \ar[ur] \ar[dr] &&\bullet \ar[ur] &&\circ\\
&&&&\bullet \ar[ur] &&\circ \ar[ur] &&\bullet \ar[ur]
\\} \quad
\xymatrix@C=0.2cm@R0.2cm{
&&&\circ \ar[dr] &&&&&&\circ \ar[dr]\\
{\overline \h:} &&\circ \ar[ur]  \ar[dr] &&\bullet  \ar[dr] &&\bullet  \ar[dr] &&\bullet  \ar[ur]  \ar[dr] &&\circ \ar[dr]\\
&\circ \ar[ur]  &&\circ \ar[ur]  \ar[dr] &&\bullet \ar[ur] \ar[r] \ar[dr] &\circ \ar[r] &\bullet \ar[ur] \ar[dr] &&\circ \ar[ur] &&\circ\\
&&&&\circ \ar[ur] &&\bullet \ar[ur] &&\circ \ar[ur]
}$$
We give some maximal $\R$-rigid objects of $\B$:
\vspace{1mm}
$${\begin{smallmatrix}
\ &3&\
\end{smallmatrix}} \oplus
\begin{smallmatrix}
3&&4\ \\
&5&
\end{smallmatrix} \oplus {\begin{smallmatrix}
\ &4&\
\end{smallmatrix}}\oplus \Lambda, \quad {\begin{smallmatrix}
\ &3&\
\end{smallmatrix}} \oplus
\begin{smallmatrix}
&2&\ \\
3&&4
\end{smallmatrix} \oplus {\begin{smallmatrix}
\ &4&\
\end{smallmatrix}}\oplus \Lambda,\quad{{\begin{smallmatrix}
\ &3&\
\end{smallmatrix}} \oplus
\begin{smallmatrix}
&2&\ \\
3&&4
\end{smallmatrix}} \oplus {\begin{smallmatrix}
2&&\\
&3&
\end{smallmatrix}} \oplus \Lambda, \quad {{\begin{smallmatrix}
\ &4&\
\end{smallmatrix}} \oplus
\begin{smallmatrix}
&2&\ \\
3&&4
\end{smallmatrix}}\oplus \Lambda \oplus {\begin{smallmatrix}
2&&\\
&4&
\end{smallmatrix}}.
$$
\vspace{1mm}
By Theorem \ref{main2}, they are the liftings of the following support $\tau$-tilting objects in $\overline \h$:
\vspace{1mm}$${\begin{smallmatrix}
\ &3&\
\end{smallmatrix}} \oplus
\begin{smallmatrix}
3&&4\ \\
&5&
\end{smallmatrix} \oplus {\begin{smallmatrix}
\ &4&\
\end{smallmatrix}},\quad{\begin{smallmatrix}
\ &3&\
\end{smallmatrix}} \oplus
\begin{smallmatrix}
&2&\ \\
3&&4
\end{smallmatrix} \oplus {\begin{smallmatrix}
\ &4&\
\end{smallmatrix}},\quad {{\begin{smallmatrix}
\ &3&\
\end{smallmatrix}} \oplus
\begin{smallmatrix}
&2&\ \\
3&&4
\end{smallmatrix}} \oplus {\begin{smallmatrix}
2&&\\
&3&
\end{smallmatrix}}, \quad {{\begin{smallmatrix}
\ &4&\
\end{smallmatrix}} \oplus
\begin{smallmatrix}
&2&\ \\
3&&4
\end{smallmatrix}}.
$$\\
Moreover,
${\begin{smallmatrix}
\ &3&\
\end{smallmatrix}} \oplus
\begin{smallmatrix}
3&&4\ \\
&5&
\end{smallmatrix} \oplus {\begin{smallmatrix}
\ &4&\
\end{smallmatrix}},\quad {\begin{smallmatrix}
\ &3&\
\end{smallmatrix}} \oplus
\begin{smallmatrix}
&2&\ \\
3&&4
\end{smallmatrix} \oplus {\begin{smallmatrix}
\ &4&\
\end{smallmatrix}},\quad {{\begin{smallmatrix}
\ &3&\
\end{smallmatrix}} \oplus
\begin{smallmatrix}
&2&\ \\
3&&4
\end{smallmatrix}} \oplus {\begin{smallmatrix}
2&&\\
&3&
\end{smallmatrix}}$
are tilting objects, and
$({{\begin{smallmatrix}
\ &4&\
\end{smallmatrix}} \oplus
\begin{smallmatrix}
&2&\ \\
3&&4
\end{smallmatrix}}, {\begin{smallmatrix}
3&&\\
&5&
\end{smallmatrix}})
$
is a support $\tau$-tilting pair in $\overline \h$.\\
\end{exm}

\begin{exm}
Let $Q\colon 1\to 2\to 3\to 4$ be the quiver of type $A_4$ and $\B:=\mathrm{D}^b(kQ)$ the bounded derived category of $kQ$ whose Auslander-Reiten quiver is the following:
$$\xymatrix@C=0.3cm@R0.3cm{
\cdot\cdot\cdot \ar[dr] &&\circ \ar[dr] &&\bullet\ar[dr] &&\circ \ar[dr] &&\bullet \ar[dr] &&\circ \ar[dr] &&\circ \ar[dr] &&\circ \ar[dr] &&\circ \ar[dr] &&\bullet \ar[dr]\\
&\circ \ar[ur] \ar[dr] &&\bullet \ar[ur] \ar[dr] &&\circ \ar[ur] \ar[dr] &&\circ \ar[ur]  \ar[dr] &&\bullet \ar[ur] \ar[dr]  &&\circ \ar[ur] \ar[dr] &&\circ \ar[ur]  \ar[dr] &&\clubsuit \ar[ur] \ar[dr] &&\bullet\ar[ur] \ar[dr] &&\cdot\cdot\cdot\\
\cdot\cdot\cdot \ar[dr] \ar[ur] &&\bullet \ar[ur] \ar[dr] &&\circ \ar[ur] \ar[dr] &&\circ \ar[ur] \ar[dr]  &&\circ \ar[ur] \ar[dr] &&\bullet \ar[ur] \ar[dr] &&\circ \ar[ur] \ar[dr] &&\clubsuit \ar[ur] \ar[dr] &&\bullet \ar[ur] \ar[dr] &&\circ \ar[ur] \ar[dr]\\
&\bullet \ar[ur] &&\circ \ar[ur] &&\circ \ar[ur] &&\circ \ar[ur] &&\circ \ar[ur] &&\bullet \ar[ur] &&\clubsuit \ar[ur] &&\bullet \ar[ur] &&\circ \ar[ur] &&\cdot\cdot\cdot
}
$$
Let $\R$ be the subcategory whose indecomposable objects are marked by bullets
here (these bullets will appear periodically on both side). We know that $\R$ is a cluster tilting subcategory of $\B$. Consider three objects denoted by $\clubsuit$, let $\U\subseteq \B$ be the subcategory generated by them. Then $\U$ is a contravariantly finite rigid subcategory. By Proposition \ref{completion}, $\mathrm{P}(\Fac \overline \U)$, which is generated by the following four objects indicated by $\heartsuit$:
$$\xymatrix@C=0.3cm@R0.3cm{
\cdot\cdot\cdot \ar[dr] &&\circ \ar[dr] &&\bullet\ar[dr] &&\circ \ar[dr] &&\bullet \ar[dr] &&\circ \ar[dr] &&\circ \ar[dr] &&\circ \ar[dr] &&\heartsuit \ar[dr] &&\bullet \ar[dr]\\
&\circ \ar[ur] \ar[dr] &&\bullet \ar[ur] \ar[dr] &&\circ \ar[ur] \ar[dr] &&\circ \ar[ur]  \ar[dr] &&\bullet \ar[ur] \ar[dr]  &&\circ \ar[ur] \ar[dr] &&\circ \ar[ur]  \ar[dr] &&\heartsuit \ar[ur] \ar[dr] &&\bullet\ar[ur] \ar[dr] &&\cdot\cdot\cdot\\
\cdot\cdot\cdot \ar[dr] \ar[ur] &&\bullet \ar[ur] \ar[dr] &&\circ \ar[ur] \ar[dr] &&\circ \ar[ur] \ar[dr]  &&\circ \ar[ur] \ar[dr] &&\bullet \ar[ur] \ar[dr] &&\circ \ar[ur] \ar[dr] &&\heartsuit \ar[ur] \ar[dr] &&\bullet \ar[ur] \ar[dr] &&\circ \ar[ur] \ar[dr]\\
&\bullet \ar[ur] &&\circ \ar[ur] &&\circ \ar[ur] &&\circ \ar[ur] &&\circ \ar[ur] &&\bullet \ar[ur] &&\heartsuit \ar[ur] &&\bullet \ar[ur] &&\circ \ar[ur] &&\cdot\cdot\cdot
}
$$
Moreover, by Proposition \ref{completion3}, $\overline \U$ is contained in a tilting subcategory $\mathrm{P}(\overline \U^{\bot_1})=:\{M\in \oB \text{ }|\text{ } \Ext^1_{\oB}(\overline \U,M)=0 \text{ and } \Ext^1_{\oB}(M,N)=0 \text{ if } \Ext^1_{\oB}(\overline \U,N)=0\}$, which is indicated by $\spadesuit$
$$\xymatrix@C=0.3cm@R0.3cm{
\cdot\cdot\cdot \ar[dr] &&\circ \ar[dr] &&\bullet\ar[dr] &&\spadesuit \ar[dr] &&\bullet \ar[dr] &&\circ \ar[dr] &&\circ \ar[dr] &&\spadesuit \ar[dr] &&\circ \ar[dr] &&\bullet \ar[dr]\\
&\circ \ar[ur] \ar[dr] &&\bullet \ar[ur] \ar[dr] &&\spadesuit \ar[ur] \ar[dr] &&\circ \ar[ur]  \ar[dr] &&\bullet \ar[ur] \ar[dr]  &&\circ \ar[ur] \ar[dr] &&\circ \ar[ur]  \ar[dr] &&\spadesuit \ar[ur] \ar[dr] &&\bullet\ar[ur] \ar[dr] &&\cdot\cdot\cdot\\
\cdot\cdot\cdot \ar[dr] \ar[ur] &&\bullet \ar[ur] \ar[dr] &&\spadesuit \ar[ur] \ar[dr] &&\circ \ar[ur] \ar[dr]  &&\circ \ar[ur] \ar[dr] &&\bullet \ar[ur] \ar[dr] &&\circ \ar[ur] \ar[dr] &&\spadesuit \ar[ur] \ar[dr] &&\bullet \ar[ur] \ar[dr] &&\spadesuit \ar[ur] \ar[dr]\\
&\bullet \ar[ur] &&\spadesuit \ar[ur] &&\circ \ar[ur] &&\circ \ar[ur] &&\circ \ar[ur] &&\bullet \ar[ur] &&\spadesuit \ar[ur] &&\bullet \ar[ur] &&\spadesuit \ar[ur] &&\cdot\cdot\cdot
}
$$
$\mathrm{P}(\overline \U^{\bot_1})$ has infinite many objects. It is not cluster tilting (it is not rigid). This also shows that the Serre functor condition in Proposition \ref{TS} is necessary. 
\end{exm}

\bigskip

\end{document}